\def\rr{{\mathbb R}}
\def\rn{{{\rr}^n}}
\def\cn{{\mathbb N}}
\def\dist{{\mathop\mathrm {\,dist\,}}}
\newtheorem{thm}{Theorem}[section]
\newtheorem{lem}[thm]{Lemma}
\newtheorem{prop}[thm]{Proposition}
\newtheorem{cor}[thm]{Corollary}
\newtheorem{quest}[thm]{Question}
\newtheorem{defn}[thm]{Definition}
\newtheorem{rem}[thm]{Remark}
\numberwithin{equation}{section}
\begin{document}
\arraycolsep=1pt
\author{Renjin Jiang \& Fanghua Lin}
\title{{\bf P\'olya's conjecture up to $\epsilon$-loss and quantitative estimates for the remainder of Weyl's law}
 \footnotetext{\hspace{-0.35cm} 2020 {\it Mathematics
Subject Classification}. Primary  35P15; Secondary  35P20, 42B37.
\endgraf{
{\it Key words and phrases: Dirichlet eigenvalue, Weyl law, P\'olya conjecture}
\endgraf}}
\date{\today}}
\maketitle

\begin{center}
\begin{minipage}{11.5cm}\small
{\noindent{\bf Abstract}. Let $\Omega\subset\mathbb{R}^n$ be a bounded Lipschitz domain.
For any $\epsilon\in (0,1)$ we show that for any Dirichlet eigenvalue $\lambda_k(\Omega)>\Lambda(\epsilon,\Omega)$, it holds
\begin{align*}
k&\le  (1+\epsilon)\frac{|\Omega|\omega(n)}{(2\pi)^n}\lambda_k(\Omega)^{n/2},
\end{align*}
where $\Lambda(\epsilon,\Omega)$ is given explicitly. This reduces the $\epsilon$-loss version of P\'olya's conjecture to a computational problem.
This estimate is based on quantitative estimates on the remainder of the Weyl law with explicit constants,
which we give a new proof without using Neumann eigenvalues.
Our arguments in deriving such uniform estimates yield also, in all dimensions $n\ge 2$, classes of domains that may even have rather irregular shapes or boundaries but satisfy P\'olya's conjecture.
Another key observation is that on strip-tiling domains (and therefore any triangles for instance) one actually has better eigenvalue estimates than P\'olya conjectured.
}\end{minipage}
\end{center}
\vspace{0.2cm}
\tableofcontents

\section{Introduction}
\subsection{Weyl's law and P\'olya's conjecture}
\hskip\parindent
Given an open bounded domain $\Omega$ in $\rn$, $n\ge 2$. Let
$$0<\lambda_1(\Omega)\le \lambda_2(\Omega)\le \lambda_3(\Omega)\le \cdots$$
be eigenvalues to the Dirichlet Laplacian on $\Omega$.
We shall simply write $\lambda_k$
instead of $\lambda_k(\Omega)$ when there is no confusion.

In 1911, Weyl \cite{Weyl11} proved the famous Weyl's law, which  states  that for any bounded open domain $\Omega\subset\rn$, $n\ge 2$, and $\lambda\to\infty$,
\begin{align}
\mathcal{N}^D_\Omega(\lambda)=\frac{|\Omega|\omega(n)}{(2\pi)^n}\lambda^{n/2}+o(\lambda^{n/2}),
\end{align}
where $\omega(n)$ denotes the volume of unit ball in $\rn$, $\mathcal{N}^D_{\Omega}(\lambda)$ is the counting function
$$\mathcal{N}^D_{\Omega}(\lambda):=\#\{\lambda_k(\Omega):\,\lambda_k(\Omega)<\lambda\}.$$
The Weyl law is also important in the longstanding question ``can one hear about the shape of a cavity",
see Lord Rayleigh \cite{Ray77}  and Kac \cite{Kac66}. Moreover, the Weyl law maybe related to the
prime number theorem,  via  Connes' trace formula, see Connes \cite{connes99},
Fathizadeh-Khalkhali \cite{FK13}  and the beamer by
Khalkhali \cite{Kha11}.

Let us denote the remainder of Weyl's law as
\begin{align}
R_\Omega(\lambda)= \mathcal{N}^D_{\Omega}(\lambda)-\frac{|\Omega|\omega(n)}{(2\pi)^n}\lambda^{n/2}.
\end{align}
Subsequent to Weyl's seminal work, Courant \cite{courant20},  Courant-Hilbert \cite{CH89}, H\"ormander \cite{Hormander68,Hormander3}, Seeley \cite{Seeley78,Seeley80}
and many others (see Ivrii \cite{Ivr16} for comprehensive description on related history) made contributions to improve the estimate on the remainder.

Weyl also  conjectured a sharper asymptotic behavior of Dirichlet eigenvalue states for domains with piecewise smooth boundary that
\begin{align}\label{conj-weyl}
\mathcal{N}^D_{\Omega}(\lambda)=\frac{|\Omega|\omega(n)}{(2\pi)^{n}} \lambda^{n/2}-\frac{|\partial\Omega|}{2^{n+1} \pi^{\frac{d-1}{2}}\Gamma(\frac{n+1}{2})}\lambda^{\frac{n-1}{2}}+o(\lambda^{\frac{n-1}2}),
\end{align}
here and in what follows,  for any set $\Omega$, $|\partial \Omega|$ denotes the surface area of it, $\mathscr{B}(\cdot,\cdot)$ the Beta function and $\Gamma(\cdot)$ the Gamma function, as usual.
The conjecture has been proved by Ivrii \cite{Ivr80} under the condition that the set of all periodic geodesic billiards in $\Omega$ has measure zero, which was proved by Safarov-Vassiliev \cite{SV97} for convex analytic domains
and polygons. {In a recent impressive work \cite{FL24b}, Frank and Larson established   sharper asymptotic formula
for Riesz means of eigenvalues for all exponent $\gamma>0$ (averaged version of \eqref{conj-weyl})  on general Lipschitz domains.}

However, the above mentioned results  for the remainder $R_\Omega(\lambda)$  state that certain asymptotic estimates valid only for large $\lambda$'s without quantifying the sizes of such $\lambda$'s,
and consequently they do not give  quantitative estimate on all eigenvalues.

It is rather natural to ask the following question.
\begin{quest}\label{question1}
Is it possible to give a uniform and quantitative estimate for the remainder $R_\Omega(\lambda)$ of Weyl's law?
\end{quest}
It was proved earlier by Netrusov and Safarov \cite{NS05} (see also \cite{FLW23}), that a quantitative bound  on rough domains up
to an extra logarithm term of the remainder of Weyl's law holds.
{Recently in \cite{FL24b}, Frank and Larson provided yet another uniform estimate of the remainder $R_\Omega(\lambda)$ on Lipschitz domains. }
\footnote[1]{After the first version of the present article was posted on arXiv,
R.  Frank and S. Larson has kindly informed us recent works  \cite{FLW23} and \cite{FL24b}, and X. He also informed us the work \cite{NS05}, in which several uniform estimates for the remainder were established. }

In 1954, P\'olya \cite{Polya54}  conjectured that, it should hold on arbitrary bounded open domain that
$$ k\le \frac{|\Omega|\omega(n)}{(2\pi)^n}\lambda_k^{n/2},\ \forall\, k\in\cn.$$
P\'olya himself \cite{Polya61} proved this conjecture for  tiling domains   in the plane.
His method extends to high dimensions, and so the conjecture is true for tiling domains in high dimensions too.

The P\'olya conjecture has been wide open since then. Other than tiling domains,
Laptev \cite{La97} verified  P\'olya's conjecture on product domains $\Omega_1\times\Omega_2\subset\rr^{n_1+n_2}$, $n_1\ge 2$,
$n_2\ge 1$, provided P\'olya's conjecture holds on $\Omega_1$. Filonov et al. \cite{FLPS23} and \cite{FLPS25}
verified, via number theoretic arguments,   P\'olya's conjecture on balls for all $n\ge 2$, and on annuli  for $n=2$.
For general domains $\Omega$, it was known that P\'olya's conjecture holds for the first two Dirichlet eigenvalues
see  \cite{Henrot06}  for the Rayleigh-Faber-Krahn inequality ($\lambda_1(\Omega)$) and   the Krahn-Szeg\"o inequality  ($\lambda_2(\Omega)$).
See also \cite{Henrot06} for the first nontrivial Neumann eigenvalue and recent exciting developments \cite{BuHe19,GNP09}
for the second nontrivial Neumann eigenvalue.

For general domains, Berezin \cite{Berezin72} and  Li-Yau \cite{LY83} independently proved for arbitrary bounded open domains that
\begin{equation}\label{ly-sum}
\frac{ |\Omega|^{2/n}\omega(n)^{2/n}}{(2\pi)^2}\sum_{i=1}^k\lambda_i \ge \frac{n}{n+2} k^{\frac{n+2}{n}},
\end{equation}
which implies that
\begin{equation}\label{ly-eigenvalue}
\left(\frac{n+2} {n}\right)^{n/2}\frac{ |\Omega|\omega(n)}{(2\pi)^n} \lambda_k^{n/2} \ge k,
\end{equation}
see also Laptev \cite{La97}, and \cite{FL20,FL24a,FLP26,GJL26,GLW11,KVW09,Me02} for further improvements on the lower order term and  generalizations.
{ The inequality \eqref{ly-sum} is equivalent to corresponding inequality for
Riesz means of order one (cf. \cite{FLW23}), which was recently improved  by  Frank and Larson \cite{FL24a} to
certain definite exponent $0<\gamma<1$ for all convex domains ($\gamma=0$ corresponds to P\'olya's conjecture). }
Note that the estimate \eqref{ly-eigenvalue} and its improvements are still a distance away from P\'olya's conjecture.

Our main purpose of this paper is to establish an approach to P\'olya's conjecture on more general domains (than tiling domains and products).
More precisely, given a Lipschitz domain $\Omega$, we can choose a larger domain $T$, so that by using the monotone property of Dirichlet eigenvalues, one has
$$\mathcal{N}^D_{\Omega}(\lambda)\le \mathcal{N}^D_{T}(\lambda)-\mathcal{N}^D_{T\setminus\overline{\Omega}}(\lambda).$$
The P\'olya conjecture will follow if one can prove, on one side $T$ satisfies better eigenvalue estimates than P\'olya conjectured,
and one the other side, $\mathcal{N}^D_{T\setminus\overline{\Omega}}(\lambda)$ has a reasonable good lower bound.

On one side, we shall prove that strip-tiling domains and certain product domains satisfy better eigenvalue estimates than P\'olya conjectured.
On the other side, we shall prove on certain domains composed by cubes of different scales the eigenvalue counting functions do have a good lower bound.
These enable us to provide in all dimensions, $n\ge 2$, a class of domains that satisfy P\'olya's conjecture.

For general Lipschitz domains, we did not have a good control of lower bound of $\mathcal{N}^D_{T\setminus\overline{\Omega}}(\lambda)$,
and our method only yields a new proof of quantitative Courant type estimate for the remainder.

If one can further obtain better estimate on the lower bound of $\mathcal{N}^D_{T\setminus\overline{\Omega}}(\lambda)$,
then one can combine the refined estimate on $\mathcal{N}^D_{T}(\lambda)$ (see Remark \ref{product-23} below, which behaves better as $\lambda\to \infty$),
to reduce P\'olya's conjecture to a computational problem.
In this paper, we can only reduce the $\epsilon$-loss version of  P\'olya's conjecture  to a computational problem.

\subsection{P\'olya's conjecture up to $\epsilon$-loss and quantitative remainder estimate}
\hskip\parindent Let us start from  a definition for P\'olya's conjecture up to an $\epsilon$-loss.
\begin{defn}\label{defn-epsilon-loss}
Let $\Omega\subset\rn$ be a bounded open domain, $n\ge 2$. For $\epsilon\in (0,1)$, we say that {\em P\'olya's conjecture up to an $\epsilon$-loss} or {\em $\epsilon$-loss version of P\'olya's conjecture}
holds for some or all Dirichlet eigenvalues $\lambda_k(\Omega)$, if it holds for some or all $k\in\cn$ that
\begin{align*}
k&\le  (1+\epsilon)\frac{|\Omega|\omega(n)}{(2\pi)^n}\lambda_k(\Omega)^{n/2}.
\end{align*}
\end{defn}
Noting that $2\le \left(\frac{n+2} {n}\right)^{n/2}\to e$ as $2\le n\to\infty$, the $\epsilon$-loss version of P\'olya's conjecture improves substantially
\eqref{ly-eigenvalue} (and its improvements on lower order terms, see \cite{Berezin72,GLW11,KVW09,Me02}).

For a bounded open domain $\Omega$, we define its width $\mathrm{width}(\Omega)$ as the minimum distance between two parallel hyperplanes that contain $\Omega$ inside.
Moreover, up to a rotation, throughout the paper we shall assume  the width $\mathrm{width}(\Omega)$ is attained in the $n$-th direction.
For a bounded Lipschitz domain $\Omega$, we let $C_{Lip}(\Omega)\ge 1$ be the constant, uniquely determined by $\Omega$, such that
for $\epsilon\in (0,\mathrm{width}(\Omega))$, such that
\begin{align}\label{constant-omega}
\left|\{x\in \rn:\, \dist(x,\partial\Omega)\le \epsilon\}\right| \le C_{Lip}(\Omega)\epsilon|\partial\Omega|.
\end{align}

We have the following P\'olya's conjecture up to $\epsilon$-loss.
\begin{thm}\label{epsilon-loss-lip}
Let $\Omega\subset\rr^n$ be a bounded Lipschitz domain, $n\ge 2$.
For any $0<\epsilon<1$,  it holds for all $\lambda_k(\Omega)\ge \Lambda(\epsilon,\Omega)$ that
\begin{align*}
k&\le  (1+\epsilon)\frac{|\Omega|\omega(n)}{(2\pi)^n}\lambda_k(\Omega)^{n/2},
\end{align*}
where $\Lambda(\epsilon,\Omega)\ge \mathrm{width}(\Omega)^{-2}$ is such that
\begin{align}\label{epsilon-constant}
\frac{2n \mathrm{diam}(\Omega)^{n-1}+|\partial\Omega|}{|\Omega|\sqrt{\Lambda(\epsilon,\Omega)}}C_{Lip}(\Omega) \left(\frac{\pi}{2}+5n^{2}\pi\log_2\frac{10\mathrm{width}(\Omega)\sqrt{\Lambda(\epsilon,\Omega)}}{\pi} \right)= \epsilon.
\end{align}
Moreover, if $\Omega$ is convex, then  $\Lambda(\epsilon,\Omega)$ can be taken smaller as
\begin{align}\label{epsilon-constant-convex}
\frac{2n \mathrm{diam}(\Omega)^{n-1}}{|\Omega|\sqrt{\Lambda(\epsilon,\Omega)}} \left(\frac{\pi}{2}+5n^{2}\pi\log_2\frac{10\mathrm{width}(\Omega)\sqrt{\Lambda(\epsilon,\Omega)}}{\pi} \right)= \epsilon.
\end{align}
\end{thm}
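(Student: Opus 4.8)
The plan is to derive Theorem~\ref{epsilon-loss-lip} from a quantitative Weyl remainder bound with fully explicit constants; the latter is the substance, and the deduction is a short monotonicity‑plus‑limiting argument. Concretely, I would first prove that for every bounded Lipschitz domain $\Omega\subset\rn$ and every $\lz\ge\mathrm{width}(\Omega)^{-2}$,
\begin{align}\label{plan-rem}
\mathcal N^D_\Omega(\lz)\le\frac{|\Omega|\oz(n)}{(2\pi)^n}\lz^{n/2}\bigl(1+g_\Omega(\lz)\bigr),\qquad g_\Omega(\lz):=\frac{2n\,\mathrm{diam}(\Omega)^{n-1}+|\pa\Omega|}{|\Omega|\sqrt\lz}\,C_{Lip}(\Omega)\Bigl(\frac{\pi}{2}+5n^2\pi\log_2\tfrac{10\,\mathrm{width}(\Omega)\sqrt\lz}{\pi}\Bigr),
\end{align}
and, when $\Omega$ is convex, the same with the summand $|\pa\Omega|$ deleted and $C_{Lip}(\Omega)$ replaced by $1$. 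Note that \eqref{plan-rem} says precisely that $R_\Omega(\lz)\le\epsilon\,\frac{|\Omega|\oz(n)}{(2\pi)^n}\lz^{n/2}$ exactly when $g_\Omega(\lz)\le\epsilon$, so once \eqref{plan-rem} is available the theorem becomes a statement about where $g_\Omega$ crosses $\epsilon$.

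For \eqref{plan-rem} I would use Neumann bracketing against a multiscale decomposition. Partition $\Omega$ (up to a null set) into a truncated Whitney family of dyadic cubes $Q$ with $\ell(Q)\asymp\dist(Q,\pa\Omega)$ and $\ell(Q)\gtrsim\lz^{-1/2}$, together with the residual collar $\{x\in\Omega:\dist(x,\pa\Omega)\lesssim\lz^{-1/2}\}$; since inserting Neumann walls only lowers eigenvalues, $\mathcal N^D_\Omega(\lz)$ is at most the sum of the Neumann counts of the pieces. For an interior cube of side $h$ one has $\mathcal N^N_Q(\lz)=\#\{k\in\zz_{\ge0}^n:|k|<h\sqrt\lz/\pi\}$, for which an explicit lattice‑point estimate gives $\mathcal N^N_Q(\lz)\le\frac{\oz(n)}{(2\pi)^n}\lz^{n/2}h^n+c\,n^2(h\sqrt\lz+1)^{n-1}$. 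The principal terms sum to at most $\frac{\oz(n)}{(2\pi)^n}\lz^{n/2}|\Omega|$ exactly because the cubes form a partition — this is what leaves the Weyl constant unspoiled and is precisely why a single‑scale cover does not work. Organizing the errors by dyadic scale: at scale $2^{-j}$ the Whitney cubes fill the $\asymp2^{-j}$‑collar of $\pa\Omega$, of volume $\le C_{Lip}(\Omega)2^{-j}|\pa\Omega|$ by \eqref{constant-omega}, hence number $\lesssim C_{Lip}(\Omega)|\pa\Omega|2^{j(n-1)}$, each contributing an error $\lesssim n^2(2^{-j}\sqrt\lz)^{n-1}$, so a per‑scale total $\lesssim n^2C_{Lip}(\Omega)|\pa\Omega|\lz^{(n-1)/2}$ that is independent of $j$; summing over the $\asymp\log_2(\mathrm{width}(\Omega)\sqrt\lz)$ scales between $\lz^{-1/2}$ and $\mathrm{width}(\Omega)$ produces the $\log_2$ factor. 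The residual collar is covered by $\lesssim C_{Lip}(\Omega)|\pa\Omega|\lz^{(n-1)/2}$ sub‑wavelength cubes carrying $O(1)$ modes apiece (no log), while the coarse‑scale cube count is dominated by $2n\,\mathrm{diam}(\Omega)^{n-1}$, the surface area of a cube of side $\mathrm{diam}(\Omega)$ enclosing $\Omega$. In the convex case, monotonicity of perimeter for inner parallel bodies yields the inner‑collar bound with constant $1$, i.e.\ $C_{Lip}(\Omega)=1$, and a refined handling of the boundary cubes (using $|\pa\Omega|\le 2n\,\mathrm{diam}(\Omega)^{n-1}$ for convex $\Omega$) removes the explicit $|\pa\Omega|$ summand, giving \eqref{epsilon-constant-convex}.

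Granting \eqref{plan-rem}, Theorem~\ref{epsilon-loss-lip} follows. First, $g_\Omega$ is strictly decreasing on $[\mathrm{width}(\Omega)^{-2},\infty)$ with $g_\Omega(\lz)\to0$: writing $u=\sqrt\lz$ and $c=10\,\mathrm{width}(\Omega)/\pi$, the $\lz$‑dependent part is $\tfrac1u\bigl(\tfrac\pi2+5n^2\pi\log_2(cu)\bigr)$, whose $u$‑derivative has the sign of $\tfrac{5n^2\pi}{\ln2}\bigl(1-\ln(cu)\bigr)-\tfrac\pi2$, which is negative once $cu>\exp\bigl(1-\tfrac{\ln2}{10n^2}\bigr)$, and $cu=\tfrac{10}{\pi}\mathrm{width}(\Omega)\sqrt\lz\ge\tfrac{10}{\pi}>e$ whenever $\lz\ge\mathrm{width}(\Omega)^{-2}$; also $u^{-1}\log_2u\to0$. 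Since moreover $g_\Omega(\mathrm{width}(\Omega)^{-2})>1$ (using $|\Omega|\le\mathrm{width}(\Omega)\,\mathrm{diam}(\Omega)^{n-1}$ and $C_{Lip}(\Omega)\ge1$), for each $\epsilon\in(0,1)$ the equation \eqref{epsilon-constant} (resp.\ \eqref{epsilon-constant-convex}) has a unique solution $\Lambda(\epsilon,\Omega)\ge\mathrm{width}(\Omega)^{-2}$, and $g_\Omega(\lz)\le\epsilon$ for all $\lz\ge\Lambda(\epsilon,\Omega)$. Now fix $k$ with $\lz_k:=\lz_k(\Omega)\ge\Lambda(\epsilon,\Omega)$. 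As the eigenvalues are discrete, for all sufficiently small $\dz>0$ we have $k\le\#\{j:\lz_j\le\lz_k\}=\mathcal N^D_\Omega(\lz_k+\dz)$, so by \eqref{plan-rem} (valid since $\lz_k+\dz\ge\mathrm{width}(\Omega)^{-2}$) and monotonicity of $g_\Omega$,
\begin{align*}
k\le\frac{|\Omega|\oz(n)}{(2\pi)^n}(\lz_k+\dz)^{n/2}\bigl(1+g_\Omega(\lz_k+\dz)\bigr)\le(1+\epsilon)\frac{|\Omega|\oz(n)}{(2\pi)^n}(\lz_k+\dz)^{n/2};
\end{align*}
letting $\dz\downarrow0$ gives the claimed inequality.

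The main obstacle is \eqref{plan-rem}, and within it the multiscale boundary accounting: making uniform over \emph{all} Lipschitz $\Omega$ the count of Whitney cubes at each dyadic scale and of the sub‑wavelength collar cubes (this is exactly where \eqref{constant-omega} and $C_{Lip}(\Omega)$ are indispensable, and where a polynomial‑in‑$n$ rather than exponential‑in‑$n$ constant has to be earned), arranging the decomposition to be a genuine partition so the Weyl constant is not inflated, controlling the Neumann count of the irregular thin pieces near $\pa\Omega$, and pinning the dimensional constant in the single‑cube lattice‑point remainder; the convex refinement additionally needs the inner‑parallel‑body inequalities. By contrast, the passage from \eqref{plan-rem} to Theorem~\ref{epsilon-loss-lip} above is elementary calculus.
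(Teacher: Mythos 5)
Your reduction of the theorem to a uniform remainder bound of the form \eqref{plan-rem}, and the calculus showing that $g_\Omega$ is decreasing on $[\mathrm{width}(\Omega)^{-2},\infty)$, exceeds $1$ at the left endpoint and tends to $0$, followed by the $\mathcal N^D_\Omega(\lambda_k+\delta)\ge k$ limiting argument, is correct and is essentially what the paper does when it derives Theorem \ref{epsilon-loss-lip} from Theorem \ref{weyl-remainder-uniform}. The substance, however, lies in \eqref{plan-rem}, and there your route diverges from the paper's and contains a genuine gap.

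You propose Dirichlet--Neumann bracketing: partition $\Omega$ into truncated Whitney cubes plus a residual collar $\{x\in\Omega:\dist(x,\pa\Omega)\lesssim\lambda^{-1/2}\}$, and bound the Neumann count of each piece. The unresolved step is the Neumann count of the boundary pieces. A covering of the collar by sub‑wavelength cubes does not give an upper bound for a Neumann problem; for bracketing you must \emph{partition}, and the resulting pieces are intersections of cubes with $\Omega$, i.e.\ irregular sets on which the Neumann counting function is not ``$O(1)$ modes apiece'' without a quantitative extension/Poincar\'e argument whose constant depends on the Lipschitz character of $\pa\Omega$ in a way not captured by $C_{Lip}(\Omega)$ alone (for general open pieces the Neumann spectrum need not even be discrete). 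This is exactly the ``refined estimates for cubes intersecting the boundary'' of \cite[Theorem 3.14]{FLW23}, where the dimensional constant remains implicit, and it is the difficulty the paper is explicitly engineered to avoid: the paper never brackets across $\pa\Omega$. Instead it takes the minimal admissible rectangle $\mathscr{R}=\mathscr{R}_{n-1}\times I_{min}\supset\Omega$, bounds $\mathcal N^D_{\mathscr{R}}$ from \emph{above} by the refined product estimate of Theorem \ref{pertubation-product-1d} (Laptev‑type, using the explicit one‑dimensional Riesz means, which even improves the Weyl constant by $-C_1(n-1)|\mathscr{R}_{n-1}|$), bounds $\mathcal N^D_{\mathscr{R}\setminus\overline{\Omega}}$ from \emph{below} by summing Dirichlet lower bounds over interior Whitney cubes of the complement (Lemmas \ref{opposite-cube}, \ref{opposite-cube-hign}), and subtracts. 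Only lower bounds on cubes strictly inside an open set are ever needed, which is why the constants $\frac{\pi}{2}+5n^2\pi\log_2(10\,\mathrm{width}(\Omega)\sqrt\lambda/\pi)$ and the combination $2n\,\mathrm{diam}(\Omega)^{n-1}+|\pa\Omega|$ (from $|\pa(\mathscr{R}\setminus\overline\Omega)|\le|\pa\mathscr{R}|+|\pa\Omega|$) come out fully explicit. As written, your proposal acknowledges the collar step as the ``main obstacle'' but does not supply it, and even if supplied along the lines of \cite{FLW23} it would not reproduce the specific constants in \eqref{epsilon-constant}; so the proof is incomplete at its central point.
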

Obviously, for a given $\epsilon\in (0,1)$, and a concrete Lipschitz (resp. convex) domain $\Omega$,
$\Lambda(\epsilon,\Omega)$ is uniquely determined by the equation \eqref{epsilon-constant} (resp. \eqref{epsilon-constant-convex} for convex domains).
{From Theorem \ref{weyl-remainder-uniform} below and its proofs in Section 4, one believes that $\Omega$ needs only to be a bounded set of finite perimeter, but the constant $C_{Lip}(\Omega)$ (calculatable) might need to replaced by another less explicit geometric constant.}  Our aim here is to demonstrate a viability that to show the $\epsilon$-loss version of P\'olya's conjecture,
Theorem \ref{epsilon-loss-lip} reduces to a computational problem for eigenvalues $\lambda_k(\Omega)<\Lambda(\epsilon,\Omega)$.

Theorem \ref{epsilon-loss-lip}  follows directly from our following uniform estimate for the remainder of the Weyl's law.
Let us start from the following definition.
\begin{defn}[Minimal Admissible rectangle]\label{defn-adm-rectangle}
Let $\mathscr{R}=I_1\times I_2\times\cdots \times I_n$ be a rectangle in $\rr^n$, $n\ge 2$, where $I_i$ is connected open interval in $\rr$ for each $1\le i\le n$.
We say that $\mathscr{R}=I_1\times I_2\times\cdots \times I_n=\mathscr{R}_{n-1}\times I_{min}$ is the minimal admissible rectangle for $\Omega$, if $I_{min}=I_n=\mathrm{width}(\Omega)$,
and $\mathscr{R}$  contains $\Omega$, and for each $i=1,\cdots,n$,
for any $\tilde I_i$ with $|\tilde I_i|<|I_i|$, $I_1\times \cdots\tilde I_i\times\cdots \times I_n$ does not contain $\Omega$ inside.
\end{defn}

%{\bf Admissible rectangle for an open domain $\Omega$}: For an open domain $\Omega$ in $\rn$,  we say that $\mathscr{R}$ is an admissible rectangle for $\Omega$, if $\mathscr{R}$
%contains $\Omega$, and each $|I_i|$ is divisible by $\min_{1\le i\le n}\{|I_i|\}$.
%Moreover, up to a rotation, we shall assume $I_{min}=\min_{1\le i\le n}\{I_i\}$ is attained in the $n$-th direction,
%and write $\mathscr{R}$ as $\mathscr{R}_{n-1}\times I_{min}$.

We have the  following uniform estimate on Lipschitz domains.
In what follows, for a bounded open domain $\Omega$, we denote by $r_{in}=r_{in}(\Omega)$ the radius of the largest ball contained in $\Omega$.

\begin{thm}\label{weyl-remainder-uniform}
Let $\Omega\subset\rr^n$ be a bounded Lipschitz domain, $n\ge 2$.  Let $\mathscr{R}=\mathscr{R}_{n-1}\times I_{min}$ be the {\em minimal Admissible rectangle} for $\Omega$. It holds for all $k\in\cn$ that
\begin{align*}
k&\le \frac{|\Omega|\omega(n)}{(2\pi)^n}\lambda_k(\Omega)^{n/2}+C(n,\Omega,\lambda_k(\Omega))\frac{\omega(n)}{(2\pi)^n}\lambda_k(\Omega)^{\frac{n-1}{2}},
\end{align*}
where
\begin{align}\label{lipschitz-uniform}
C(n,\Omega,\lambda_k(\Omega))= C_{Lip}(\Omega)|\partial(\mathscr{R}\setminus\overline{\Omega})| \left(\frac{\pi r_{in}(\mathscr{R}\setminus\overline{\Omega})}{|I_{min}|}+5n^{2}\pi\log_2\frac{10|I_{min}|\sqrt{\lambda_k(\Omega)}}{\pi} \right)-C_1(n-1)|\mathscr{R}_{n-1}|.
\end{align}
Moreover, it holds for each $k\in\cn$ that
\begin{align}\label{lower-uniform}
k&\ge
 \frac{\omega(n)|\Omega|}{(2\pi)^n} \lambda_k(\Omega)^{n/2}-  \frac{C_{Lip}(\Omega)\omega(n)|\partial\Omega| }{(2\pi)^n}\lambda_k(\Omega)^{\frac{n-1}{2}}\left(\frac{\pi r_{in}(\Omega)}{|I_{min}|}+5n^{2}\pi\log_2\frac{10|I_{min}| \sqrt{\lambda_k(\Omega)}}{\pi } \right).
\end{align}
Above,
\begin{align}\label{defn-c1n}
C_1(n)=\begin{cases}
\frac 23 & n=1,\\
\frac{3\pi}{16} & n=2,\\
\frac{\pi\mathscr{B}(\frac{n}{2}-1,2)}{6\mathscr{B}(\frac{n}{2}-1,5/2)} & n=3,4,\\
\frac{3\pi\mathscr{B}(\frac{n }{2}-1,2)}{16 \mathscr{B}(\frac{n }{2}-1,5/2)}\left(\frac{247}{256}\right)^{\frac{n-2}{2}}  &  n\ge 5.
\end{cases}
\end{align}
\end{thm}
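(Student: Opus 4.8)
\bigskip

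\noindent\textbf{Sketch of proof.}
The plan is to prove the two displayed inequalities at the level of the Dirichlet counting function $\mathcal{N}^D_\Omega(\lambda)$ and then pass to eigenvalues: since the counting function is defined with a strict inequality, $\mathcal{N}^D_\Omega(\lambda_k(\Omega))\le k-1$ and $k\le\mathcal{N}^D_\Omega(\mu)$ for every $\mu>\lambda_k(\Omega)$, so a lower bound $\mathcal{N}^D_\Omega(\lambda)\ge W_\Omega(\lambda)-(\cdots)$ evaluated at $\lambda_k(\Omega)$ gives \eqref{lower-uniform} immediately, and an upper bound $\mathcal{N}^D_\Omega(\lambda)\le W_\Omega(\lambda)+C(n,\Omega,\lambda)\tfrac{\omega(n)}{(2\pi)^n}\lambda^{(n-1)/2}$ gives \eqref{lipschitz-uniform} after letting $\mu\downarrow\lambda_k(\Omega)$ and using that the right-hand side is continuous in $\lambda$; here $W_D(\lambda):=\tfrac{\omega(n)|D|}{(2\pi)^n}\lambda^{n/2}$ denotes the Weyl term of a set $D$. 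Two elementary facts are used throughout: (i) Dirichlet bracketing, namely $\mathcal{N}^D_{\Omega'}(\lambda)\le\mathcal{N}^D_\Omega(\lambda)$ for $\Omega'\subset\Omega$ and $\sum_i\mathcal{N}^D_{\Omega_i}(\lambda)\le\mathcal{N}^D_\Omega(\lambda)$ for pairwise disjoint open $\Omega_i\subset\Omega$; and (ii) that the Dirichlet spectrum of a rectangle $I_1\times\cdots\times I_n$ is $\{\pi^2\sum_i(m_i/|I_i|)^2:m\in\cn^n\}$, so that $\mathcal{N}^D$ of rectangles and cubes is a count of lattice points in the positive octant of an ellipsoid. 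From (ii), for a cube $Q$ of side $a$, assigning to each lattice point $m\in\cn^n$ with $|m|<\rho:=a\sqrt{\lambda}/\pi$ the unit cube $\prod_i[m_i,m_i+1)$ gives $\mathcal{N}^D_Q(\lambda)\ge 2^{-n}\omega(n)\rho^n-n\,2^{-(n-1)}\omega(n-1)\rho^{n-1}$, while comparing the same cubes with the octant of the ball of radius $\rho$ gives $\mathcal{N}^D_Q(\lambda)\le 2^{-n}\omega(n)\rho^n$; since $2^{-n}\omega(n)\rho^n=W_Q(\lambda)$, we get $W_Q(\lambda)-c(n)(a\sqrt{\lambda})^{n-1}\le\mathcal{N}^D_Q(\lambda)\le W_Q(\lambda)$ with $c(n)=n\,\omega(n-1)/(2\pi)^{n-1}$.

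For the lower bound \eqref{lower-uniform} I would decompose $\Omega$ by a Whitney-type dyadic family adapted to the width. Set $a_j=|I_{min}|2^{-j}$ and let $J$ be the largest $j$ with $a_j\ge\pi\sqrt n/\sqrt{\lambda_k(\Omega)}$ (so $J\approx\log_2(|I_{min}|\sqrt{\lambda_k(\Omega)}/(\pi\sqrt n))$). For $0\le j\le J$ use the grid $a_j\zz^n$, and let $\mathcal{Q}_j$ consist of the grid cubes of side $a_j$ that lie in $\Omega$ but are not contained in any cube of $\mathcal{Q}_{j'}$, $j'<j$; the grids being nested, $\bigsqcup_{j\le J}\bigsqcup_{Q\in\mathcal{Q}_j}Q$ is a disjoint dyadic subdivision of $\Omega_J:=\bigcup\{\text{grid-}a_J\text{ cubes}\subset\Omega\}$, and each $Q\in\mathcal{Q}_j$ satisfies $\dist(Q,\pa\Omega)\le 2\sqrt n\,a_j$ because its parent cube (side $2a_j$) meets $\Omega^c$. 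By (i) and the one-cube estimate, $\mathcal{N}^D_\Omega(\lambda)\ge\sum_{j,Q}\mathcal{N}^D_Q(\lambda)\ge W_{\Omega_J}(\lambda)-\sum_{j,Q}c(n)(a_j\sqrt\lambda)^{n-1}$. The loss $W_\Omega(\lambda)-W_{\Omega_J}(\lambda)$ is controlled since $\Omega\setminus\Omega_J\subset\{\dist(\cdot,\pa\Omega)\le\sqrt n\,a_J\}$ has measure $\le C_{Lip}(\Omega)\sqrt n\,a_J|\pa\Omega|\ls C_{Lip}(\Omega)|\pa\Omega|/\sqrt\lambda$ by \eqref{constant-omega}, giving a term of the right shape. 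For the error sum, grouping by $j$: the cubes of $\mathcal{Q}_j$ lie in $\{\dist(\cdot,\pa\Omega)\le 2\sqrt n\,a_j\}$, of volume $\le C_{Lip}(\Omega)2\sqrt n\,a_j|\pa\Omega|$ once $2\sqrt n\,a_j<\mathrm{width}(\Omega)$ and of volume $\le|\Omega|\le C_{Lip}(\Omega)r_{in}(\Omega)|\pa\Omega|$ always (the last using \eqref{constant-omega} with $\epsilon\uparrow\mathrm{width}(\Omega)$, since $\dist(\cdot,\pa\Omega)\le r_{in}(\Omega)$ on $\Omega$); hence for the former scales $\#\mathcal{Q}_j\le C_{Lip}(\Omega)2\sqrt n\,|\pa\Omega|a_j^{-(n-1)}$, so the scale-$j$ error contributes at most $c(n)2\sqrt n\,C_{Lip}(\Omega)|\pa\Omega|\lambda^{(n-1)/2}$, \emph{independent of $j$}; summing over the $\le\log_2(10|I_{min}|\sqrt\lambda/\pi)$ relevant scales and bounding the dimensional constant $c(n)2\sqrt n=2n^{3/2}\omega(n-1)/(2\pi)^{n-1}$ crudely against $5n^2\pi\,\omega(n)/(2\pi)^n$ (which reduces to $\omega(n-1)/\omega(n)\le\tfrac54\sqrt n$, valid for all $n\ge2$) produces the $5n^2\pi\log_2(10|I_{min}|\sqrt\lambda/\pi)$ factor; the finitely many coarse scales $2\sqrt n\,a_j\ge\mathrm{width}(\Omega)$ are summed by the geometric series $\sum_j|\Omega|a_j^{-n}\ls|\Omega|/|I_{min}|\cdot(\sqrt\lambda/\pi)^{n-1}$ and, using $|\Omega|\le C_{Lip}(\Omega)r_{in}(\Omega)|\pa\Omega|$, yield the $\tfrac{\pi r_{in}(\Omega)}{|I_{min}|}$ term. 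Collecting everything gives \eqref{lower-uniform}.

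For the upper bound \eqref{lipschitz-uniform}, let $\mathscr{R}=\mathscr{R}_{n-1}\times I_{min}$ be the minimal admissible rectangle for $\Omega$ of Definition \ref{defn-adm-rectangle}. Up to a null set $\mathscr{R}$ is the disjoint union of $\Omega$ and $\mathscr{R}\setminus\overline{\Omega}$, so Dirichlet bracketing gives $\mathcal{N}^D_\Omega(\lambda)\le\mathcal{N}^D_{\mathscr{R}}(\lambda)-\mathcal{N}^D_{\mathscr{R}\setminus\overline{\Omega}}(\lambda)$, and since $|\mathscr{R}|=|\Omega|+|\mathscr{R}\setminus\overline{\Omega}|$, subtracting $W_\Omega(\lambda)$ yields $R_\Omega(\lambda)\le R_{\mathscr{R}}(\lambda)-R_{\mathscr{R}\setminus\overline{\Omega}}(\lambda)$. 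For the rectangle I would write $\mathcal{N}^D_{\mathscr{R}}(\lambda)=\sum_{m\ge1,\ \pi m<|I_{min}|\sqrt\lambda}\mathcal{N}^D_{\mathscr{R}_{n-1}}(\lambda-\pi^2m^2/|I_{min}|^2)$, bound each inner term by its Weyl value $\tfrac{\omega(n-1)|\mathscr{R}_{n-1}|}{(2\pi)^{n-1}}(\cdot)^{(n-1)/2}$, and then estimate the deficit of the resulting concave power sum in $m$ against its integral from below; comparing $t\mapsto(1-t^2)^{(n-1)/2}$ to its chords on the relevant subintervals produces the quotients of Beta functions and gives $R_{\mathscr{R}}(\lambda)\le -C_1(n-1)|\mathscr{R}_{n-1}|\tfrac{\omega(n)}{(2\pi)^n}\lambda^{(n-1)/2}$ with $C_1$ as in \eqref{defn-c1n}. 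Finally $\mathscr{R}\setminus\overline{\Omega}$ is again a bounded Lipschitz open set, with $C_{Lip}(\mathscr{R}\setminus\overline{\Omega})\le C_{Lip}(\Omega)$, with extent $\le|I_{min}|$ in the $n$-th direction and $r_{in}(\mathscr{R}\setminus\overline{\Omega})\le|I_{min}|/2$, so applying the lower bound of the previous paragraph to it (using $|I_{min}|$ in place of its own width, which only enlarges the right side) and negating gives exactly $-R_{\mathscr{R}\setminus\overline{\Omega}}(\lambda)\le C_{Lip}(\Omega)\tfrac{\omega(n)|\pa(\mathscr{R}\setminus\overline{\Omega})|}{(2\pi)^n}\lambda^{(n-1)/2}\bigl(\tfrac{\pi r_{in}(\mathscr{R}\setminus\overline{\Omega})}{|I_{min}|}+5n^2\pi\log_2\tfrac{10|I_{min}|\sqrt\lambda}{\pi}\bigr)$. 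Adding the two bounds gives $R_\Omega(\lambda)\le C(n,\Omega,\lambda)\tfrac{\omega(n)}{(2\pi)^n}\lambda^{(n-1)/2}$ with $C$ as in \eqref{lipschitz-uniform}.

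The conceptual skeleton — Dirichlet bracketing plus a Whitney decomposition — is classical; the genuine work, and the source of the explicit constants, is the quantitative bookkeeping of Step 2: arranging the scales $a_j$ so that \eqref{constant-omega} simultaneously controls the missing interior volume and the number of cubes at every scale, and uniformly dominating all dimensional constants (lattice-point error per cube, Whitney overlap, $\omega(n-1)/\omega(n)$, the geometric series of coarse scales) by the single expression $5n^2\pi\log_2(10|I_{min}|\sqrt\lambda/\pi)+\pi r_{in}(\Omega)/|I_{min}|$. A second delicate point is the sharp rectangle estimate, where the precise constants $C_1(n)$ in \eqref{defn-c1n} must be extracted from an Euler–Maclaurin-type lower bound for the deficit of a power sum; and a subsidiary technical check is that $\mathscr{R}\setminus\overline{\Omega}$ inherits a Lipschitz structure with $C_{Lip}(\mathscr{R}\setminus\overline{\Omega})\le C_{Lip}(\Omega)$.
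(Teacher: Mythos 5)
Your proposal follows the paper's proof almost step for step: a Whitney-type dyadic decomposition of $\Omega$ with per-cube lattice-point lower bounds and the layer estimate \eqref{constant-omega} to count the cubes at each scale gives \eqref{lower-uniform}; and the subtraction $\mathcal{N}^D_\Omega(\lambda)\le \mathcal{N}^D_{\mathscr{R}}(\lambda)-\mathcal{N}^D_{\mathscr{R}\setminus\overline{\Omega}}(\lambda)$, combined with the Laptev-type improved rectangle bound (the source of $C_1(n-1)$; this is exactly Theorem \ref{pertubation-product-1d}) and the lower bound applied to the Lipschitz complement with $C_{Lip}(\mathscr{R}\setminus\overline{\Omega})\le C_{Lip}(\Omega)$, gives \eqref{lipschitz-uniform}. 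So the route is the same, and the structure is sound.

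The one place your accounting does not close as written is the term $\pi r_{in}(\Omega)/|I_{min}|$. In the paper this term is not produced by coarse dyadic scales: the thickness $\epsilon$ of the uncovered boundary layer is kept as a \emph{free parameter} throughout Step 1, yielding an intermediate bound with error factor $\epsilon\lambda^{1/2}+5n^2\pi\log_2(10r_{in}/\epsilon)$, and only at the very end is $\epsilon=\pi r_{in}(\Omega)/(|I_{min}|\sqrt{\lambda_k})$ chosen, so that $\epsilon\sqrt{\lambda_k}$ equals $\pi r_{in}(\Omega)/|I_{min}|\le\pi/2$ exactly. Moreover the paper's Whitney cubes have largest side comparable to $r_{in}/\sqrt n$, so there are no coarse scales to treat separately; every scale is charged to the boundary layer via \eqref{constant-omega}. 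In your version, hard-wiring the finest scale to $a_J\sim\pi\sqrt n/\sqrt\lambda$ makes the uncovered-layer loss of size about $2\pi n$ rather than $\le\pi/2$, and your geometric series over the coarse scales, after inserting $|\Omega|\le C_{Lip}(\Omega)r_{in}(\Omega)|\partial\Omega|$, carries a prefactor of order $n^{3/2}\omega(n-1)/\omega(n)\sim n^2$ rather than $\pi$. These excesses can plausibly be absorbed into the slack $5n^2\pi\log_2 10$ that your scale count leaves inside the logarithm, but as stated the claim that the coarse scales ``yield the $\pi r_{in}(\Omega)/|I_{min}|$ term'' is off by a dimension-dependent factor; the free-$\epsilon$ optimization is the cleaner device, and the one that actually lands on the constants in the theorem.
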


Obviously, both quantities $ \frac{ r_{in}(\mathscr{R}\setminus\overline{\Omega})}{|I_{min}|}$ and $\frac{ r_{in}(\Omega)}{|I_{min}|}$ are not larger than $1/2$,
and therefore can be replaced by $1/2$. Moreover, note that if $|\partial(\mathscr{R}\setminus\overline{\Omega})|$ is small compared to $|\mathscr{R}_{n-1}|$,
we then have at least that P\'olya's conjecture holds for the first some eigenvalues.

{Let us emphasize that the quantitative Courant type estimate for the remainder has been already obtained in \cite{NS05} (see also \cite{FLW23}) via Courant's method of Dirichlet-Neumann bracketing, and also in \cite{FL24b} via refined Tauberian theory.
Our method, which depends on a modified min-max argument, is related to but different from the original ones from \cite{courant20,CH89,FLW23,NS05}. By using the {\em minimal Admissible rectangles},
we do not need to use Neumann eigenvalues. The latter is crucial in Courant's Dirichlet-Neumann bracketing method; see the proofs in
Section 4.
}

%{\color{blue} Let us emphasize once more the above Courant type
%quantitative estimate has been obtained already in \cite[Theorem 1.8]{NS05} via Courant's method of
%Dirichlet-Neumann bracketing, see also \cite{FLW23,FL24b}. Our method for the proof of  Theorem \ref{weyl-remainder-uniform} and Corollary \ref{cor-uniform-weyl} below
%does not use Neumann eigenvalue.}

For convex sets, we have a clean and better estimate as following.
\begin{cor}\label{cor-uniform-weyl}
Let $\Omega\subset\rr^n$ be a bounded convex domain, $n\ge 2$.  Let $\mathscr{R}=\mathscr{R}_{n-1}\times I_{min}$ be the {\em minimal Admissible rectangle} for $\Omega$. It holds for all $k\in\cn$ that
\begin{align*}
k&\le \frac{|\Omega|\omega(n)}{(2\pi)^n}\lambda_k(\Omega)^{n/2}\\
&\quad+  \frac{\omega(n)}{(2\pi)^n}\lambda_k(\Omega)^{\frac{n-1}{2}} \left(|\partial\mathscr{R}\setminus\partial\Omega|\left(\frac{\pi r_{in}(\mathscr{R}\setminus{\overline{\Omega}})}{|I_{min}|}+5n^{2}\pi\log_2\frac{10|I_{min}|\sqrt{\lambda_k(\Omega)}}{\pi} \right)-C_1(n-1)|\mathscr{R}_{n-1}|\right),
\end{align*}
and
\begin{align*}
k&\ge
 \frac{\omega(n)|\Omega|}{(2\pi)^n} \lambda_k(\Omega)^{n/2}-  \frac{\omega(n)|\partial\Omega| }{(2\pi)^n}\lambda_k(\Omega)^{\frac{n-1}{2}}\left(\frac{\pi r_{in}(\Omega)}{|I_{min}|}+5n^{2}\pi\log_2\frac{10|I_{min}| \sqrt{\lambda_k(\Omega)}}{\pi } \right).
\end{align*}
\end{cor}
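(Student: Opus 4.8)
My plan is to revisit the proof of Theorem~\ref{weyl-remainder-uniform} and to isolate the two places where convexity of $\Omega$ forces a quantitative gain: the Lipschitz tube constant $C_{Lip}(\Omega)$ collapses to $1$, and the surface term $|\partial(\mathscr{R}\setminus\overline\Omega)|$ is replaced by the strictly smaller $|\partial\mathscr{R}\setminus\partial\Omega|$, the ``missing'' boundary $\partial\Omega$ being absorbed, with a favourable sign, into the term $-C_1(n-1)|\mathscr{R}_{n-1}|$.

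Recall the skeleton. The lower bound for $k$ is a lower bound for $\mathcal{N}^D_\Omega(\lambda_k)$: one exhausts $\Omega$ from inside by finitely many pairwise disjoint coordinate rectangles $P_l$, Whitney-refined towards $\partial\Omega$ and aligned with the width direction $I_{min}$, so that Dirichlet bracketing gives $\mathcal{N}^D_\Omega(\lambda)\ge\sum_l\mathcal{N}^D_{P_l}(\lambda)$, each $\mathcal{N}^D_{P_l}$ being bounded below by its exact Weyl main term minus a surface loss of order $|\partial P_l|\,\lambda^{(n-1)/2}$; on summing, the missed volume sits in a collar $\{x\in\Omega:\dist(x,\partial\Omega)<t\}$ and the missed count is carried by that collar's boundary area. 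The upper bound for $k$ uses $\mathcal{N}^D_\Omega(\lambda)\le\mathcal{N}^D_{\mathscr{R}}(\lambda)-\mathcal{N}^D_{\mathscr{R}\setminus\overline\Omega}(\lambda)$, evaluates $\mathcal{N}^D_{\mathscr{R}}(\lambda)$ exactly as a lattice-point count in an ellipsoidal octant (leading term $\frac{|\mathscr{R}|\omega(n)}{(2\pi)^n}\lambda^{n/2}$, with a strong negative sharp second-order contribution along the face of the ellipsoid perpendicular to the width direction), and bounds $\mathcal{N}^D_{\mathscr{R}\setminus\overline\Omega}(\lambda)$ from below by an interior exhaustion of $\mathscr{R}\setminus\overline\Omega$.

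Convexity then enters at two points. First, for a convex body the inner parallel surfaces $\partial(\Omega^{-t})$ are nonincreasing in surface measure, whence $|\{x\in\Omega:\dist(x,\partial\Omega)<t\}|\le t\,|\partial\Omega|$ and the collar boundary has area at most $|\partial\Omega|$; this is exactly the assertion that $C_{Lip}(\Omega)$ may be taken equal to $1$ wherever it appears, which upgrades the lower estimate of Theorem~\ref{weyl-remainder-uniform} into the one claimed here. Second, convexity means every coordinate line meets $\Omega$ in a single interval, so $\mathscr{R}\setminus\overline\Omega$ splits, in the width direction, into two subgraph regions over $\mathscr{R}_{n-1}$ bounded by a convex, resp.\ concave, graph. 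These I would exhaust by boxes that are \emph{tall in the width direction and thin in $\mathscr{R}_{n-1}$}: over a small $(n-1)$-cube $q_i\subset\mathscr{R}_{n-1}$ one inserts the full-height sub-box reaching up to the minimum of the graph on $q_i$. Then the faces of these boxes facing $\partial\Omega$ merely tile $\mathscr{R}_{n-1}$ and so contribute total area $\le|\mathscr{R}_{n-1}|$ --- not $|\partial\Omega|$ --- while the remaining faces lie along $\partial\mathscr{R}$ and contribute area $\le|\partial\mathscr{R}\setminus\partial\Omega|$, the Whitney refinement near $\partial\mathscr{R}$ producing the single logarithmic factor $\log_2(10|I_{min}|\sqrt{\lambda_k}/\pi)$ and the constant $5n^2\pi$. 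Setting the $|\mathscr{R}_{n-1}|$-proportional losses of this exhaustion against the strong negative second-order term of the exact rectangle count yields the net gain $-C_1(n-1)|\mathscr{R}_{n-1}|$, with the displayed value of $C_1(n-1)$.

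The step I expect to be the main obstacle is precisely this exhaustion of the non-convex region $\mathscr{R}\setminus\overline\Omega$, together with the extraction of the sharp constant $C_1(n-1)$. One must: (i) keep the uncovered oscillation slabs $q_i\times(\min_{q_i}g,\,g(y))$ of total volume $O(\lambda^{-1/2}|\mathscr{R}_{n-1}|)$, which forces the side length of $q_i$ to be $\sim\lambda^{-1/2}$ and demands a separate, gentler treatment near the ``equatorial ridge'' of $\partial\Omega$, where the defining graph $g$ has unbounded slope (a ball inscribed in a cube is the model case), so that this near-edge error stays lower order; (ii) optimise the shape of the tall boxes so that the constant in their individual Dirichlet surface loss is small enough that, after subtracting the exact lattice count's second-order term on the two faces of $\mathscr{R}$ perpendicular to $I_{min}$, the combination is genuinely negative and equals exactly $C_1(n-1)|\mathscr{R}_{n-1}|$; and (iii) verify that the Whitney decomposition along $\partial\mathscr{R}$ costs no more than the stated $5n^2\pi\log_2(10|I_{min}|\sqrt{\lambda_k}/\pi)$ per unit of boundary area, uniformly in $\Omega$. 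Once these bookkeeping points are settled, substitution back into the scheme of Theorem~\ref{weyl-remainder-uniform} gives both inequalities, the term $-C_1(n-1)|\mathscr{R}_{n-1}|$ being manifestly independent of $\Omega$.
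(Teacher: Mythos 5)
Your treatment of the lower bound is correct and is exactly what the paper does: for a convex $\Omega$ the collar estimate $|\{x\in\Omega:\,\dist(x,\partial\Omega)<t\}|\le t\,|\partial\Omega|$ lets one take $C_{Lip}(\Omega)=1$ in {\bf Step 1} of the proof of Theorem \ref{weyl-remainder-uniform}, and the second displayed inequality of the corollary follows verbatim.

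The upper bound is where there is a genuine gap. Your exhaustion of $\mathscr{R}\setminus\overline{\Omega}$ by columns $q_i\times J_i$ that are tall in the width direction and of cross-section $\ell(q_i)\sim\lambda^{-1/2}$ cannot produce an error of order $\lambda^{(n-1)/2}\log\lambda$. First, the surface accounting is wrong: the lateral faces of the interior columns do not ``lie along $\partial\mathscr{R}$'' --- they are internal Dirichlet walls between adjacent columns, of total area $\approx (n-1)|I_{min}||\mathscr{R}_{n-1}|/\ell(q_i)\sim \sqrt{\lambda}\,|I_{min}||\mathscr{R}_{n-1}|$, and Dirichlet--Dirichlet bracketing loses an amount proportional to the total internal wall area times $\lambda^{(n-1)/2}$. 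Concretely, a column with $\ell(q_i)=c\lambda^{-1/2}$ has first eigenvalue $\gtrsim (n-1)\pi^2\lambda/c^2$, and its eigenvalue count below $\lambda$ falls short of its Weyl volume term by a fixed positive fraction (a Riemann-sum error of relative size $O(1/c)$); summed over the $\sim|\mathscr{R}_{n-1}|\lambda^{(n-1)/2}$ columns, the total deficit is of order $|\mathscr{R}|\lambda^{n/2}/c$, i.e.\ of the order of the main term itself. Second, even setting that aside, the mechanism you propose for producing $-C_1(n-1)|\mathscr{R}_{n-1}|$ --- cancelling an $O(|\mathscr{R}_{n-1}|)\lambda^{(n-1)/2}$ exhaustion loss against the negative second-order term of the rectangle count --- cannot yield that exact constant: in the paper the term $-C_1(n-1)|\mathscr{R}_{n-1}|$ is the full, uncancelled gain from Theorem \ref{pertubation-product-1d} applied to $\mathscr{R}=\mathscr{R}_{n-1}\times I_{min}$, and there is no slack left to absorb any additional loss proportional to $|\mathscr{R}_{n-1}|$. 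What the paper actually does is re-run {\bf Step 2} of Theorem \ref{weyl-remainder-uniform} with a Whitney-type decomposition of $\mathscr{R}\setminus\overline{\Omega}$ graded by the distance to $\partial\Omega$ (cube size comparable to that distance, so cubes of each dyadic scale have total surface area controlled by $|\partial\mathscr{R}\setminus\partial\Omega|$ and only $\log_2(10|I_{min}|\sqrt{\lambda}/\pi)$ scales occur), combined with the convexity fact $|\{x\in\mathscr{R}\setminus\overline{\Omega}:\,\dist(x,\partial\Omega)<\epsilon\}|\le \epsilon\,|\partial\mathscr{R}\setminus\partial\Omega|$; this simply replaces $C_{Lip}(\Omega)|\partial(\mathscr{R}\setminus\overline{\Omega})|$ by $|\partial\mathscr{R}\setminus\partial\Omega|$, with no new decomposition and no cancellation against the $C_1(n-1)$ term.
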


Let us remark that the logarithm term in \eqref{lipschitz-uniform} comes from the Whitney decomposition for $\mathscr{R}\setminus\overline{\Omega}$, see the proof of Theorem \ref{weyl-remainder-uniform} in Section 4 below.
For an explicitly given Lipschitz domain $\Omega$,
one may perform a better decomposition of $\mathscr{R}\setminus\overline{\Omega}$, which is not necessarily of Whitney type, and which may give better constant in the lower order terms if one can decompose the given domain into fewer and larger cubes or rectangles. Generally, to get better estimates than \eqref{lipschitz-uniform},
one has to find some generic and better decomposition than the Whitney decomposition we used in the proof.
If $\mathscr{R}\setminus\overline{\Omega}$ admits a better decomposition, such as  a union of up to infinite many cubes with finite surface, we then can prove full P\'olya's conjecture, see discussions in the next subsection.

\subsection{An approach to P\'olya's conjecture and concrete examples}
\hskip\parindent Using the strategy developed above, we can provide in all dimensions $n\ge 2$,
a class of domains which may have rather irregular shape or boundary but satisfy P\'olya's conjecture.
Using Seeley's asymptotic estimates \cite{Seeley78,Seeley80} (see also H\"ormander \cite{Hormander3}), we first provide
an approach to P\'olya's conjecture that might be  applied to general smooth domains.

\begin{defn}[Strip-tiling domains]
Let $\Omega$ be a bounded Lipschitz domain in $\rn$, $n\ge 2$.
Up to rotations and translations, we assume that $\Omega$ is contained in two hyperplanes  $\mathbb{R}^{n-1}\times \{0\}$ and $\mathbb{R}^{n-1}\times \{w(\Omega)\}$ for some $w(\Omega)\in (0,\infty)$.
We say that $\Omega$ is a strip-tiling domain, if
one can use isometries of $\Omega$, obtained by rotating $\Omega$ only in the first $n-1$ directions,
to cover $\mathbb{R}^{n-1}\times (0,w(\Omega))$ without overlapped interiors.
\end{defn}
Obviously, a rectangle $\mathscr{R}=\mathscr{R}_{n-1}\times I\subset\rn$, $n\ge 2$ is a strip-tiling domain.
The following Figure \ref{fig1} provide more  examples of general strip-tiling domains.
\begin{figure}[htbp]
\centering
\includegraphics[scale=0.6]{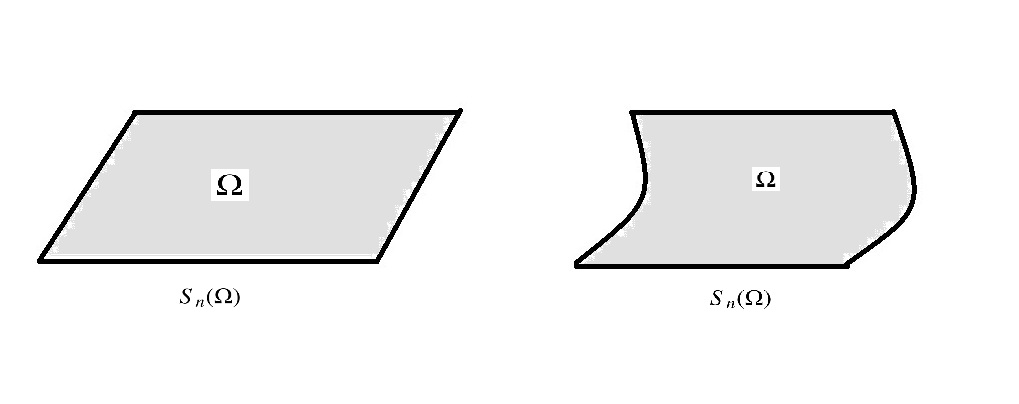}
\caption{Examples of strip-tiling domains in two dimension}
\label{fig1}
\end{figure}
\begin{rem}\label{1.8-strip}\rm
(i) Note that in the covering we fix the $n$-th  direction of $\Omega$, but allow rotating $\Omega$ in the first $n-1$ directions.
This requires the two side face of $\Omega$ in the $n$-th direction both tile $\rr^{n-1}$,
moreover, the two side face have the same surface measure.

(ii) Let $\Omega$ be a strip-tiling domain that can cover $\mathbb{R}^{n-1}\times (0,w(\Omega))$.
Let us denote by $S_n(\Omega)$ one of the two side faces of $\Omega$ in the $n$-th direction in what follows. Then the measure of $\Omega$
equals $|S_n(\Omega)|\times w(\Omega)$.
\end{rem}

\begin{thm}\label{qualitative-polya-rectangle}
Suppose that $\Omega$  is a strip-tiling domain  in $\rn$, $n\ge 2$. Let $S_n(\Omega)$ be a side face of $\Omega$ in the $n$-th direction.
Assume that $\Omega_0$ is a smooth domain contained in $\Omega$.
There exists $C_P=C(\Omega_0,n)$ such that if $|S_n(\Omega)|\ge C_P$ then the P\'olya conjecture holds on $\Omega\setminus\overline{\Omega}_0$.
\end{thm}

Our first observation is that eigenvalues on strip-tiling domains satisfying better estimates than P\'olya conjectured.
\begin{thm}\label{main}
Let $\Omega$ be a strip-tiling domain that can tile $\rr^{n-1}\times (0,w(\Omega))$, $n\ge 2$. Let $S_n(\Omega)$ be a side face of $\Omega$ in the $n$-th direction.
Then it holds that for all $k\ge 1$ that
\begin{align*}
k&\le \frac{|\Omega|\omega(n)}{(2\pi)^n}\lambda_k(\Omega)^{n/2}-C_1(n-1)|S_n(\Omega)| \frac{\omega(n)}{(2\pi)^n}\lambda_k(\Omega)^{\frac{n-1}{2}},
\end{align*}
where $C_1(n)$ is as in \eqref{defn-c1n}.
\end{thm}

For $M\ge 2$, by $\tilde\Omega$ $M$-tiles $\Omega$
we mean there are $M$ isometries of $\tilde\Omega$, $\{\tilde\Omega_{k}\}_{1\le k\le M}$, with non-overlapped interior, such that $\cup_{1\le k\le M}\tilde\Omega_k\subset \Omega$
and $|\Omega|=M|\tilde\Omega|$.

\begin{cor}
Suppose that $\Omega$ $K$-tiles a  strip-tiling domain $\tilde\Omega$ in $\rn$, $K\ge 2$ and $n\ge 2$.
 Let $S_n(\tilde\Omega)$ be a side face of $\tilde\Omega$ in the $n$-th direction.
Then it holds for all $k\ge 1$ that
\begin{align*}
k&\le \frac{|\Omega|\omega(n)}{(2\pi)^n}\lambda_k(\Omega)^{n/2}-\frac{C_1(n-1)}{K}|S_n(\tilde\Omega)| \frac{\omega(n)}{(2\pi)^n}\lambda_k(\Omega)^{\frac{n-1}{2}}.
\end{align*}
\end{cor}

The above result applies especially to any triangles in the plane.
\begin{cor}
Let $\Omega$ be a triangle in $\rr^2$ with vertices at $a,b,c$.
Then it holds for all $k\ge 1$ that
\begin{align*}
k&\le \frac{|\Omega|}{4\pi}\lambda_k(\Omega)-\frac{\max\{|ab|,|bc|,|ca|\}}{12\pi} \lambda_k(\Omega)^{\frac{1}{2}}.
\end{align*}
\end{cor}
\begin{rem}\rm
Note that previously, it was only known that on an arbitrary triangle $\Omega$,
\begin{align*}
k&\le \frac{|\Omega|}{4\pi}\lambda_k(\Omega), \ \forall\ k\ge 1,
\end{align*}
which is due to P\'olya \cite{Polya61}.
In special cases of equilateral triangle, see Pinsky \cite{Pinsky80}.
\end{rem}

In what follows, by a cube $Q$ we always mean it is an open cube and  denote by $\ell(Q)$ its side length.
Let us first introduce a definition. Let us emphasize  that, since the P\'olya conjecture is scaling invariant,
it is not restrictive to require in the following definition that the side length of the largest cube $Q$ is  one.

\begin{defn}[Admissible class]\label{defn-cube}
Let $n\ge 2$. Let $\{Q_k\}_{k\in \cn}$ be a sequence of cubes in $\rn$, where
we allow the set $\{Q_k\}_{k\in \cn}$ to be of finitely many cubes $\{Q_k\}_{1\le k\le k_0}$, by simply setting $Q_k$ as empty set for $k>k_0$.
Assume the largest cube in $\{Q_k\}_{k\in \cn}$ is of side length one.
Set $\mathscr{S}_Q$ as the surface area of these cubes, i.e.,
$$\mathscr{S}_Q:=\sum_{k\in \cn} \ell(Q_k)^{n-1}.$$
We say that $\{Q_{k}\}_{k\in \cn}$ is of {\em Admissible class} if $\mathscr{S}_Q<\infty$, and for any $k,j\in \cn$, $Q_k\cap Q_j=\emptyset$ if $k\neq j$.
\end{defn}
We have a class of domains that satisfy P\'olya's conjecture in all dimensions.
\begin{thm}\label{polya-rectangle-minus}
Let $\Omega$ be a strip-tiling domain in $\rn$, $n\ge 2$. Let $S_n(\Omega)$ be  a side face of $\Omega$ in the $n$-th direction.
Let  $\{Q_{k}\}_{k\in \cn}$ be of {\em Admissible class} in $\rn$, that is contained in $\Omega$.
If $|S_n(\Omega)|\ge C_2(n-1) \mathscr{S}_Q$, then the P\'olya conjecture holds on $\Omega\setminus\overline{\cup_{k\in \cn}Q_{k}}$,
where
\begin{align}\label{defn-c2n}
C_2(n)=\begin{cases}
2\sqrt 2\pi,& n=1,\\
9\sqrt 3, & n=2,\\
\frac{6(n+1)^{3/2}\mathscr{B}(\frac{n}{2}-1,5/2)}{\mathscr{B}(\frac{n}{2}-1,2)} & n=3,4,\\
\frac{16(n+1)^{3/2} \mathscr{B}(\frac{n }{2}-1,5/2)}{3\mathscr{B}(\frac{n }{2}-1,2)}\left(\frac{256}{247}\right)^{\frac{n-2}{2}}  &  n\ge 5.\end{cases}
\end{align}
\end{thm}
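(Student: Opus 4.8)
The plan is to prove the statement equivalent to P\'olya's conjecture on $\Omega=\mathscr{R}\setminus\overline{\cup_k Q_k}$, namely that the Dirichlet counting function obeys $\mathcal{N}^D_\Omega(\lambda)\le \frac{|\Omega|\omega(n)}{(2\pi)^n}\lambda^{n/2}$ for every $\lambda>0$, by a Dirichlet bracketing argument along the (essentially disjoint) decomposition $\mathscr{R}=\Omega\sqcup\bigsqcup_k Q_k$. First I would record two elementary facts. Since the $Q_k$ are pairwise disjoint open axis-parallel cubes inside $\mathscr{R}$ and $\sum_k\ell(Q_k)^{n-1}=\mathscr{S}_Q<\infty$, a Fubini argument in the last coordinate — the projection forgetting $x_n$ sends $Q_k$ to an $(n-1)$-cube of content $\ell(Q_k)^{n-1}$, so a.e.\ line parallel to $e_n$ meets only finitely many $Q_k$ — shows $|\partial(\cup_k Q_k)|=0$ and hence $|\Omega|=|\mathscr{R}|-\sum_k|Q_k|$. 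And imposing the (null) Dirichlet conditions on $\cup_k\partial Q_k$ only raises eigenvalues, so $H^1_0(\Omega)\oplus\bigoplus_k H^1_0(Q_k)\subset H^1_0(\mathscr{R})$ and the min--max principle gives
\[
\mathcal{N}^D_\Omega(\lambda)+\sum_k \mathcal{N}^D_{Q_k}(\lambda)\le \mathcal{N}^D_{\mathscr{R}}(\lambda)\qquad(\lambda>0).
\]

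It then remains to insert two matched two-term estimates for rectangles — precisely the lattice-point estimates already underlying Theorems \ref{epsilon-loss-lip} and \ref{weyl-remainder-uniform}. For the box I would invoke the sharpened P\'olya inequality for rectangles (the ingredient responsible for the term $-C_1(n-1)|\mathscr{R}_{n-1}|$ in \eqref{lipschitz-uniform}), applied with respect to a \emph{shortest} edge of $\mathscr{R}$ so that the relevant cross-section has area at least $|\mathscr{R}_{n-1}|$; in counting-function form,
\[
\mathcal{N}^D_{\mathscr{R}}(\lambda)\le \frac{|\mathscr{R}|\omega(n)}{(2\pi)^n}\lambda^{n/2}-\frac{\omega(n)C_1(n-1)|\mathscr{R}_{n-1}|}{(2\pi)^n}\lambda^{(n-1)/2},
\]
which I only need for $\lambda\ge\lambda_1(\mathscr{R})$; on that range the right-hand side is increasing in $\lambda$ (as $C_1(n-1)<\pi$), which promotes the per-eigenvalue bound to the counting-function bound. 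For each cube $Q$ of side $\ell\le 1$ I would use
\[
\mathcal{N}^D_{Q}(\lambda)\ge \frac{|Q|\omega(n)}{(2\pi)^n}\lambda^{n/2}-\frac{\omega(n)\,C_1(n-1)C_2(n-1)}{(2\pi)^n}\,\ell^{\,n-1}\lambda^{(n-1)/2},
\]
the key point being that a cube needs \emph{no Whitney decomposition}, so the logarithm present in \eqref{lower-uniform} disappears. This I would prove by estimating $\mathcal{N}^D_Q(\lambda)=\#\{z\in(\zz_{>0})^n:|z|<r\}$ with $r=\ell\sqrt{\lambda}/\pi$: assigning to each point of the quarter-ball $\{y\in[0,\infty)^n:|y|<r\}$ the unit cube with ``ceiling'' corner gives $\#\{z\in(\zz_{>0})^n:|z|<r\}\ge\omega(n)2^{-n}(r-\sqrt n)^n$, and $(r-\sqrt n)^n\ge r^n-n^{3/2}r^{n-1}$, producing the bound with $C_1(n-1)C_2(n-1)=\pi n^{3/2}$ once $n\ge 4$; for $n=2,3$ one inserts instead the sharper planar, resp.\ spatial, lattice-point counts, which give exactly the smaller constants encoded in \eqref{defn-c1n}--\eqref{defn-c2n}. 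When $\mathcal{N}^D_Q(\lambda)=0$ the displayed lower bound holds vacuously, since then $r\le\sqrt n\le C_1(n-1)C_2(n-1)/\pi$, so its right-hand side is $\le 0$.

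Combining the three displays, for every $\lambda$ with $\mathcal{N}^D_\Omega(\lambda)\ge 1$ (hence $\lambda>\lambda_1(\Omega)\ge\lambda_1(\mathscr{R})$) I obtain
\[
\mathcal{N}^D_\Omega(\lambda)\le\frac{\bigl(|\mathscr{R}|-\textstyle\sum_k|Q_k|\bigr)\omega(n)}{(2\pi)^n}\lambda^{n/2}+\frac{\omega(n)C_1(n-1)}{(2\pi)^n}\bigl(C_2(n-1)\mathscr{S}_Q-|\mathscr{R}_{n-1}|\bigr)\lambda^{(n-1)/2};
\]
the hypothesis $|\mathscr{R}_{n-1}|\ge C_2(n-1)\mathscr{S}_Q$ kills the last term, the first equals $\frac{|\Omega|\omega(n)}{(2\pi)^n}\lambda^{n/2}$, and the case $\mathcal{N}^D_\Omega(\lambda)=0$ is trivial; this is P\'olya's conjecture on $\Omega$. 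The step I expect to be the genuine obstacle is not the bracketing, which is routine, but arranging the two rectangle estimates to be \emph{exactly} compatible — the box must lose precisely $C_1(n-1)|\mathscr{R}_{n-1}|\lambda^{(n-1)/2}$ and each cube at most $C_1(n-1)C_2(n-1)\ell^{n-1}\lambda^{(n-1)/2}$, with no logarithmic or lower-order slack on either side. This is exactly why the constants in \eqref{defn-c1n}--\eqref{defn-c2n} are those particular Beta-function ratios and why the dimensions $n=2,3$ must be handled by the sharper lattice-point counts.
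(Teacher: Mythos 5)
Your architecture --- the bracketing $\mathcal{N}^D_\Omega(\lambda)+\sum_k\mathcal{N}^D_{Q_k}(\lambda)\le\mathcal{N}^D_{\mathscr{R}}(\lambda)$, the improved upper bound for the box coming from the one-dimensional Riesz-mean estimate, and the elementary lower bound for each cube obtained by shrinking the quarter-ball radius by the diagonal --- is exactly the paper's, and for $n\ge 4$ your argument is essentially Step 2 of the proof of Theorem \ref{3hd-polya}: there the cube bound of Lemma \ref{opposite-cube-hign}(ii) has loss coefficient $n^{3/2}\pi\,\ell^{n-1}$ and indeed $C_1(n-1)C_2(n-1)=n^{3/2}\pi$, which is precisely your bookkeeping (your Bernoulli-type justification of $(r-\sqrt n)^n\ge r^n-n^{3/2}r^{n-1}$ is in fact a bit cleaner than the paper's pairing of binomial terms).

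The genuine gap is in dimensions $n=2,3$, where you attribute the smaller constants to ``sharper lattice-point counts'' for the cubes. The arithmetic does not support this. In your scheme each cube must lose at most $C_1(n-1)C_2(n-1)\ell^{n-1}\lambda^{(n-1)/2}$ in units of $\omega(n)/(2\pi)^n$, i.e.\ $\tfrac{4\sqrt2\pi}{3}\ell\lambda^{1/2}$ for $n=2$ and $\tfrac{27\sqrt3\pi}{16}\ell^2\lambda$ for $n=3$, whereas the shrink-by-the-diagonal count only yields coefficients $2\sqrt2\pi$ and $3\sqrt3\pi$ (Lemma \ref{opposite-cube} and Lemma \ref{opposite-cube-hign}(i)); for $n=3$ you would need to improve the uniform cube bound by almost a factor of two, and no such estimate is supplied (nor is it clear one exists uniformly in $\lambda$, since the true second Weyl term of the cube is not far below the required coefficient). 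The paper does something different (see Remark \ref{rem-better-constant}): it keeps the crude cube bounds in every dimension and instead sharpens the \emph{rectangle} upper bound for large $\lambda$, using Corollary \ref{first-k-dirichlet-1d} to dispose of all eigenvalues below an explicit threshold ($\lambda_{k_0}>33$ for $n=2$, $>61$ for $n=3$) and then invoking the floor-function form of Lemma \ref{lem-riesz-dirichlet-n1}, which raises the effective deficit coefficient of $\mathscr{R}$ from $C_1(1)=2/3$ to $1$ and from $C_1(2)=3\pi/16$ to $\pi/3$; these are exactly the values that turn $3\sqrt2\pi$ into $2\sqrt2\pi$ and $16\sqrt3$ into $9\sqrt3$. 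As written, your proof establishes the theorem only with the larger constants $C_2(1)=3\sqrt2\pi$ and $C_2(2)=16\sqrt3$. A smaller point: your Fubini/Borel--Cantelli argument for $|\Omega|=|\mathscr{R}|-\sum_k|Q_k|$ does not work, because the slice of $\overline{\cup_kQ_k}$ can be strictly larger than the closure of the slice --- cubes of an Admissible class can accumulate on a set of positive measure even with $\mathscr{S}_Q<\infty$ --- though the paper assumes the same identity silently, so this is a shared subtlety rather than one specific to your write-up.
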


\begin{cor}\label{2d-polya-cor}
Let $\Omega$ be a strip-tiling domain in $\rn$, $n\ge 2$. Let $S_n(\Omega)$ be  a side face of $\Omega$ in the $n$-th direction. Let  $M\ge 2$
and $\Omega_0$ $M$-tile $\Omega$.
Let  $\{Q_{k}\}_{k\in \cn}$ be of {\em Admissible class} in $\rn$, that is contained in $\Omega_0$.
If $|S_n(\Omega)|\ge C_2(n-1) M\mathscr{S}_Q$, then P\'olya's conjecture holds on $\Omega_0\setminus\overline{\cup_{k\in \cn}Q_{k}}$.
\end{cor}
Note that $M$ isometries of the domain $\Omega_0\setminus \overline{\cup_{k\in \cn}Q_{k}}$
corresponds to the case that removing $M$ copies of  $\{Q_{k}\}_{k\in \cn}$  from $\Omega$, the conclusion is obvious.

Some further remarks are in order. In what follows for $C>0$ we use $\lfloor C \rfloor$ to denote the integer part of it.
\begin{rem}\label{rem-examples}\rm
(i)  Let us take $\ell(k)=2^{\lfloor k/2\rfloor}$ for $k\ge 2$, $\ell(1)$=1,
and $\{Q_{1,1},\, \{Q_{k,j}\}_{k\ge 2, 1\le j\le \ell(k)}\}$ be a sequence of cubes, where $\ell(Q_{k,j})=2^{1-k}$.
Then $\mathscr{S}_Q\le 2(2+\sqrt 2)$.  So for a strip-tiling domain $\Omega$ with $|S_n(\Omega)|> 8(1+\sqrt 2)\pi$,
one can arbitrarily remove any domain, that is composed by all or some of these cubes $\{Q_{1,1},\, \{Q_{k,j}\}_{k\ge 2, 1\le j\le \ell(k)}\}$ with non-overlapped interior,
from $\Omega$ such that the remaining set satisfies P\'olya's conjecture. See Figure \ref{fig2} for some examples.

\begin{figure}[htbp]
    \centering

    \subfloat[A strip-tiling domain with a diamond and a room-corridor  removed]
    {
        \begin{minipage}[t]{0.5\textwidth}
            \centering
            \includegraphics[width=0.92\textwidth]{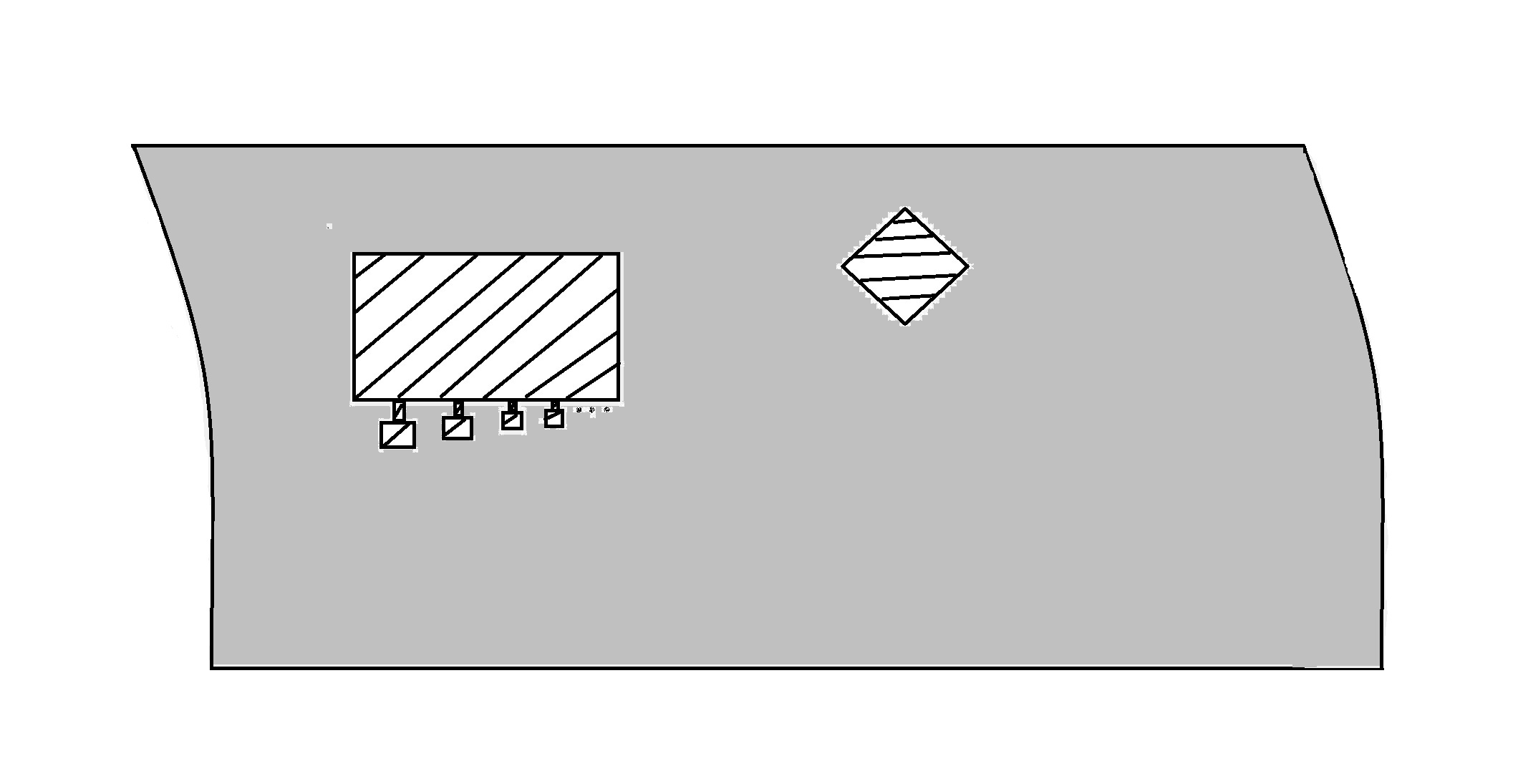}
        \end{minipage}%
    }
    \subfloat[Trapezoid with infinite cubes removed]
    {
        \begin{minipage}[t]{0.5\textwidth}
            \centering
            \includegraphics[width=0.9\textwidth]{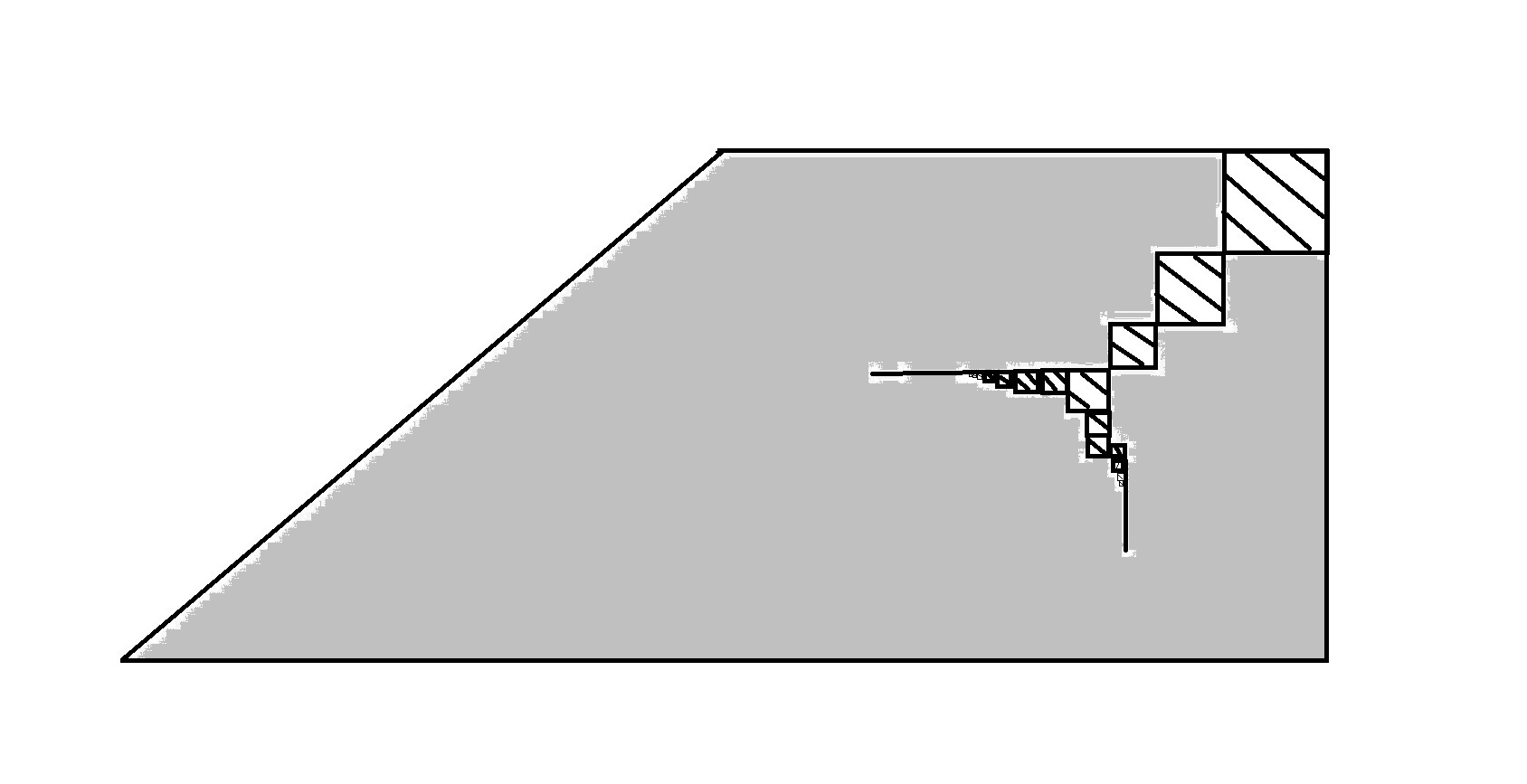}
        \end{minipage}
    }%

    \caption{Examples satisfying P\'olya's conjecture}
    \label{fig2}
\end{figure}

(ii) Previously, in $\rr^2$, P\'olya's conjecture was only known to be true on two types of domains,  tiling domains (\cite{Polya61}), balls, sectors and annuli (\cite{FLPS23,FLPS25}).
In higher dimensions $n\ge 3$, other than tiling domains, balls and sectors that tile balls, there are a class of product domains satisfying   P\'olya's conjecture (\cite{La97}),
which are products of open sets in $\rr^{n-k}$ and domains satisfying P\'olya's conjecture in $\rr^{k}$ (tiling domains, balls or sectors that tile balls), $k\ge 2$. See also \cite{HW24} for a specific product in $\rr^3$.

Our result (Theorem \ref{polya-rectangle-minus}) provides in all dimensions $n\ge 2$, a class of domains that may have rather irregular shapes or boundaries but satisfy P\'olya's conjecture.
\end{rem}

\subsection{Our method and plan of the paper} \hskip\parindent
The main strategy of the paper is as following.
% {\color{blue} We shall first prove  Theorem \ref{polya-rectangle-minus},
%where in particular we shall give the refined estimate for the rectangles and more generally product domains, and lower bound for the counting functions on cubes.
%With these preparations, especially refined estimates on rectangles, and lower bounds for the counting function on cubes, we then give the proof for our main results, Theorem \ref{weyl-remainder-uniform}
%and subsequently Theorem \ref{epsilon-loss-lip}. }

We first use the explicit formula of Riesz means of Dirichlet eigenvalues
in $\rr$ (cf. Lemma \ref{lem-riesz-dirichlet-n1}), following the inspiring observation by Laptev \cite{La97},
 to give a refined estimates on product domains $\Omega_1\times\Omega_2$ (see Theorem \ref{pertubation-product-1d} below).
 Using these refined estimates on product domains together with P\'olya's method \cite{Polya61},
 we give the refined estimates on strip-tiling domains.

We then establish a quantitative lower bound for the counting function (upper bound for the Dirichlet eigenvalue) on cubes (see Lemma \ref{opposite-cube} and Lemma \ref{opposite-cube-hign} below), which together with our refinements on strip-tiling domains and
monotonicity of Dirichlet eigenvalues  to conclude the proof Theorem \ref{polya-rectangle-minus}. Moreover, in higher dimensions, $n\ge 3$, certain product domains can replace the role of strip-tiling domains, see Theorem \ref{3hd-polya} below.

%{\color{blue} With the preparations in the proof of Theorem \ref{polya-rectangle-minus}, especially, Theorem \ref{pertubation-product-1d}, Lemma \ref{opposite-cube} and Lemma \ref{opposite-cube-hign},
%we can give the proof of our main result, Theorem \ref{epsilon-loss-lip} and Theorem \ref{weyl-remainder-uniform}. Despite may behough
For Theorem \ref{weyl-remainder-uniform} we shall apply the Whitney decomposition of a Lipschitz domain $\Omega$, and using the lower bound for the counting function
on cubes ({see Lemma \ref{opposite-cube} and Lemma \ref{opposite-cube-hign} below}) along with an optimization to deduce a lower bound for the counting function  $\mathcal{N}_\Omega^D(\lambda)$ on $\Omega$.
Finally, applying our refinements on product domains ({see Theorem \ref{pertubation-product-1d} below}) and
monotonicity of Dirichlet eigenvalues, together with the lower bound of $\mathcal{N}_{\mathscr{R}\setminus\overline{\Omega}}^D(\lambda)$ to conclude the proof. {Theorem \ref{epsilon-loss-lip}
then follows almost immediately from Theorem \ref{weyl-remainder-uniform}. }

%
%
%Let us remark that the logarithm term in \eqref{lipschitz-uniform} comes from the Whitney decomposition for $\mathscr{R}\setminus\overline{\Omega}$, see the proof of Theorem \ref{weyl-remainder-uniform} in Section 4 below.
%For the domains satisfying P\'olya's conjecture from Theorem \ref{polya-rectangle-minus} and Theorem \ref{3hd-polya},  we have a better decomposition for
% $\mathscr{R}\setminus\overline{\Omega}$ which is a union of up to infinite many cubes with finite surface area. For an explicitly given Lipschitz domain $\Omega$,
%one can perform a better decomposition of $\mathscr{R}\setminus\overline{\Omega}$, which is not necessarily of Whitney type, and will give better constant in the lower order term
%if one can decompose it into fewer and larger cubes or rectangles. Generally, to get better estimates than \eqref{lipschitz-uniform},
%one has to find some generic and better decomposition than the Whitney decomposition we used in the proof.
%
%Theorem \ref{weyl-remainder-uniform} implies the following uniform estimate on Dirichlet eigenvalues, by noticing that
%the minimal Admissible rectangle satisfies $|\partial\mathscr{R}|\le 2^n \mathrm{diam}(\Omega)^{n-1}$ and $|I_{min}|\le \mathrm{diam}(\Omega)$.

The paper is organized as follows.

In Section 2, we study the refined estimates on product domains, and prove the main results
 Theorem \ref{pertubation-product-1d}  and Corollary \ref{first-k-dirichlet-1d}.

In Section 3, we provide the proofs for Theorem \ref{qualitative-polya-rectangle} and  the results on concrete examples of P\'olya's conjecture, Theorem \ref{polya-rectangle-minus}.
Since in higher dimensions, $n\ge 3$, we can replace the rectangle $\mathscr{R}_{n-1}$ by general set satisfying P\'olya's conjecture,
we shall split the proof into the case of two dimension, and three and higher dimensions where we prove a more general result, Theorem \ref{3hd-polya}.
We shall also  show some results regarding cubes of Admissible class from some more general class of domains,
see Theorem \ref{removing-domain-nonproduct} below.

%{\color{blue} With the preparations in Section 2 and Section 3, we can then, in Section 4,  complete the proof of our main results, Theorem \ref{weyl-remainder-uniform} and  Theorem \ref{epsilon-loss-lip}. }
We shall employ {Lemma \ref{opposite-cube} and Lemma \ref{opposite-cube-hign} from Section 3 and Theorem \ref{pertubation-product-1d} from Section 2} to prove the uniform estimates on Lipschitz sets, Theorem \ref{weyl-remainder-uniform} and its corollary, Theorem \ref{epsilon-loss-lip}.

For $C>0$ we use $\lfloor C \rfloor$ to denote the integer part of it. For $a\in \rr$ we denote by $a_+$ the quantity
 $\max\{0,a\}$. For a cube $Q$ we denote by $\ell(Q)$ its side length.
For a bounded open domain $\Omega$, we denote by $r_{in}=r_{in}(\Omega)$ the radius of the largest ball contained in $\Omega$.
Throughout the paper, most of constants are given explicitly,
if not we shall point it out.

\section{Refined eigenvalue estimates}
\hskip\parindent In this section, we prove refined eigenvalue estimates on product domains and strip-tiling domains.

\subsection{Product domains}\hskip\parindent
We first prove Theorem \ref{pertubation-product-1d}  and its corollary,  Corollary \ref{first-k-dirichlet-1d}.
Recall that, if an open domain $\Omega$ is in the real line, then its eigenvalues have explicit lower bounds, which satisfy for $k\ge 1$
$$\lambda_k(\Omega)\ge \frac{\pi^2}{|\Omega|^2}k^2.$$
This allows us to show the following estimate for the Riesz means.
\begin{lem}\label{lem-riesz-dirichlet-n1}
 Let $\Omega\subset\rr$ be bounded. It holds that
\begin{align}\label{riesz-dirichlet-n1}
\sum_{k:\,\lambda_k(\Omega)<\lambda}(\lambda-\lambda_k)&\le
\frac{2|\Omega|}{3\pi} \lambda^{3/2} \min\left\{ \left(1 - \frac{3\pi}{16|\Omega|\sqrt\lambda} \right)_+,\,\left(1 - \left(\frac{\lfloor\frac{|\Omega|\sqrt{\lambda}}{\pi}\rfloor}{\frac{|\Omega|\sqrt{\lambda}}{\pi}}\right)^2\frac{3\pi}{4|\Omega|\sqrt\lambda} \right)\right\},
\end{align}
and for  $\lambda>\frac{\pi^2}{|\Omega|^2}$,
\begin{align}\label{riesz-dirichlet-n2}
\sum_{k:\,\lambda_k(\Omega)<\lambda}(\lambda-\lambda_k)^{1/2}&\le \frac{|\Omega|}{4}\lambda- \frac { \sqrt{\lambda}}{2 }+
  \frac{\sqrt 6\pi}{9|\Omega|} \sqrt{\lfloor\frac{|\Omega|}{\pi}\sqrt{\lambda}\rfloor+\frac 12}\le
 \frac {|\Omega|}{4} \lambda\left(1-\frac{2}{3} \frac{1}{|\Omega|\sqrt\lambda}\right).
 \end{align}
 Moreover, if $\Omega$ is a connected open interval, it holds for $\lambda>\lambda_1(\Omega)=\frac{\pi^2}{|\Omega|^2}$ that
 \begin{align}\label{riesz-dirichlet-n2-lower}
\sum_{k:\,\lambda_k(\Omega)<\lambda}(\lambda-\lambda_k)^{1/2}&\ge
 \frac{|\Omega|}{4}\lambda \left(1-\frac{2\pi}{|\Omega|\sqrt\lambda}+\frac{\pi^2}{|\Omega|^2\lambda}\right).
 \end{align}
\end{lem}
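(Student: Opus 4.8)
The plan is to reduce both inequalities to the case of a single open interval and then compute explicitly. For $p\in\{1,\tfrac12\}$ the map $x\mapsto x^{p}$ is nonnegative, nondecreasing on $[0,\infty)$ and vanishes at $0$; combined with the eigenvalue lower bound $\lambda_k(\Omega)\ge \pi^{2}k^{2}/|\Omega|^{2}$ recalled above, this gives: if $\lambda_k(\Omega)<\lambda$ then $0<\lambda-\lambda_k(\Omega)\le\lambda-\pi^{2}k^{2}/|\Omega|^{2}$, so $\pi^{2}k^{2}/|\Omega|^{2}<\lambda$ as well, and hence by termwise monotonicity together with positivity of the extra terms
\[
\sum_{k:\,\lambda_k(\Omega)<\lambda}\bigl(\lambda-\lambda_k(\Omega)\bigr)^{p}\le \sum_{k:\,\pi^{2}k^{2}/|\Omega|^{2}<\lambda}\Bigl(\lambda-\frac{\pi^{2}k^{2}}{|\Omega|^{2}}\Bigr)^{p}.
\]
The right-hand side is exactly the $p$-Riesz mean of the Dirichlet Laplacian on an interval $I$ of length $\ell:=|\Omega|$. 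Thus it suffices to prove \eqref{riesz-dirichlet-n1} and the upper bound in \eqref{riesz-dirichlet-n2} when $\Omega=I$ is one interval; write $\gamma:=\ell\sqrt\lambda/\pi$ and $M:=\#\{k:\lambda_k(I)<\lambda\}$, so that $M<\gamma\le M+1$.

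For \eqref{riesz-dirichlet-n1} ($p=1$) everything is elementary algebra. From $\lambda_k(I)=\pi^{2}k^{2}/\ell^{2}$ and $\sum_{k=1}^{M}k^{2}=\tfrac{M^{3}}{3}+\tfrac{M^{2}}{2}+\tfrac{M}{6}$ one gets $\sum_{k=1}^{M}(\lambda-\pi^{2}k^{2}/\ell^{2})=\lambda\bigl(M-\tfrac{M^{3}}{3\gamma^{2}}\bigr)-\tfrac{\lambda(3M^{2}+M)}{6\gamma^{2}}$. Since $t\mapsto t-t^{3}/(3\gamma^{2})$ is nondecreasing on $[0,\gamma]$ with value $\tfrac23\gamma$ at $t=\gamma$, and $M<\gamma$, the first bracket is $\le\tfrac23\gamma$. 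Dropping $\tfrac{M}{6\gamma^{2}}$ produces the factor $1-(\lfloor\gamma\rfloor/\gamma)^{2}\cdot\tfrac{3\pi}{4\ell\sqrt\lambda}$ (with a one-line direct check of the borderline $\gamma\in\mathbb{Z}$, where $M=\gamma-1$); and the inequality $3M^{2}+M\ge\tfrac34\gamma^{2}$ for integers $M\ge1$, which follows from $\gamma\le M+1$ and $9M^{2}-2M-3\ge0$, turns the second term into the factor $\bigl(1-\tfrac{3\pi}{16\ell\sqrt\lambda}\bigr)_{+}$ (the case $\gamma\le1$ being trivial). Re-expanding $\gamma$ in terms of $|\Omega|$ and $\lambda$ gives \eqref{riesz-dirichlet-n1}.

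For the upper bound of \eqref{riesz-dirichlet-n2} the key point is that $\phi(t):=\sqrt{\gamma^{2}-t^{2}}$ is a quarter-circle: nonnegative, decreasing and \emph{concave} on $[0,\gamma]$, with $\int_{0}^{\gamma}\phi=\tfrac{\pi\gamma^{2}}{4}$, and $\sum_{k=1}^{M}(\lambda-\pi^{2}k^{2}/\ell^{2})^{1/2}=\tfrac{\pi}{\ell}\sum_{k=1}^{M}\phi(k)$. Because $\phi$ is concave the trapezoidal sum underestimates the integral, so
\[
\sum_{k=1}^{M}\phi(k)\le \int_{0}^{M}\phi-\tfrac12\phi(0)+\tfrac12\phi(M)=\frac{\pi\gamma^{2}}{4}-\int_{M}^{\gamma}\phi-\frac{\gamma}{2}+\frac{\phi(M)}{2};
\]
the term $-\gamma/2$ is precisely what produces the $-\tfrac12\sqrt\lambda$ in \eqref{riesz-dirichlet-n2}. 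It then remains to bound $\tfrac12\phi(M)-\int_{M}^{\gamma}\phi$: writing $a:=\gamma-M\in(0,1]$ one has $\phi(M)=\sqrt{a(2M+a)}\le\sqrt{a(2M+1)}$ and, using $\gamma+t\ge 2M$ on $[M,\gamma]$, $\int_{M}^{\gamma}\phi\ge\tfrac23\sqrt{2M}\,a^{3/2}$; maximizing $\sqrt a\,(\tfrac12-c\,a)$ over $a\in(0,1]$ for the resulting constant then yields $\tfrac12\phi(M)-\int_{M}^{\gamma}\phi\le\tfrac{\sqrt6}{9}\sqrt{\lfloor\gamma\rfloor+\tfrac12}$, which is where the numerical constant $\tfrac{\sqrt6}{9}$ comes from (using $\lfloor\gamma\rfloor\ge M$). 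Multiplying by $\pi/\ell$ and re-expanding gives the first inequality of \eqref{riesz-dirichlet-n2}. The second inequality follows by squaring: it reduces to $\tfrac23(\lfloor\beta\rfloor+\tfrac12)\le\beta^{2}$ with $\beta:=|\Omega|\sqrt\lambda/\pi$, i.e.\ to $(\beta-1)(\beta+\tfrac13)\ge0$, valid since $\beta>1$.

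Finally, \eqref{riesz-dirichlet-n2-lower} concerns an honest interval $\Omega=I$ with $\gamma>1$ (no reduction needed). For $t\in[1,\gamma]$ one has the pointwise inequality $\gamma^{2}-t^{2}\ge(\gamma-1)^{2}-(t-1)^{2}$, whence, extending $\phi$ by zero past $\gamma$ (it stays decreasing, so $\phi(k)\ge\int_{k}^{k+1}\phi$) and using $\gamma\le M+1$,
\[
\sum_{k=1}^{M}\phi(k)\ \ge\ \int_{1}^{M+1}\phi\ =\ \int_{1}^{\gamma}\phi\ \ge\ \int_{1}^{\gamma}\sqrt{(\gamma-1)^{2}-(t-1)^{2}}\,dt\ =\ \frac{\pi}{4}(\gamma-1)^{2},
\]
and multiplying by $\pi/\ell$ yields \eqref{riesz-dirichlet-n2-lower}. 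The main obstacle is obtaining the sharp constant $\tfrac{\sqrt6}{9}$ in \eqref{riesz-dirichlet-n2}: a crude ``decreasing function versus integral'' comparison $\phi(k)\le\int_{k-1}^{k}\phi$ recovers only the leading term $\tfrac14|\Omega|\lambda$ and loses the $-\tfrac12\sqrt\lambda$, so one must exploit concavity through the trapezoidal comparison and then carefully balance the surviving boundary term $\tfrac12\phi(M)$ against the circular-segment integral $\int_{M}^{\gamma}\phi$; the rest is bookkeeping and routine one-variable estimates.
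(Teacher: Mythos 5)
Your proposal is correct and follows essentially the same route as the paper: termwise comparison with the model spectrum $\pi^2k^2/|\Omega|^2$, explicit summation for the $p=1$ case, and a quarter-disk area comparison for the $p=1/2$ case, with the lower bound \eqref{riesz-dirichlet-n2-lower} obtained by fitting a quarter disk of radius $\gamma-1$ inside the union of rectangles. Your trapezoidal-rule formulation for concave $\phi$ and the subsequent optimization of $\tfrac12\phi(M)-\int_M^{\gamma}\phi$ are just a repackaging of the paper's inscribed-triangle argument (the paper's last triangle near $(\gamma,0)$ is exactly the bound $\int_M^{\gamma}\phi\ge\tfrac12(\gamma-M)\phi(M)$), and both yield the same constant $\tfrac{\sqrt6}{9}$.
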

\begin{proof} Let us first prove \eqref{riesz-dirichlet-n1}.
If $\lambda\le \frac{\pi^2}{|\Omega|^2}$, then $\sum_{k:\,\lambda_k(\Omega)<\lambda}(\lambda-\lambda_k)=0$. Suppose $\lambda>\frac{\pi^2}{|\Omega|^2}$ and set $k_\lambda=\lfloor\frac{|\Omega|}{\pi}\sqrt{\lambda}\rfloor$. Then we have
\begin{align*}
\sum_{k:\,\lambda_k(\Omega)<\lambda}(\lambda-\lambda_k)&=\sum_{1\le k\le k_\lambda}(\lambda-\lambda_k)\le \sum_{1\le k\le k_\lambda}(\lambda-\frac{\pi^2}{|\Omega|^2}k^2)\\
&= \lambda k_\lambda - \frac{\pi^2}{|\Omega|^2}\frac 16 k_\lambda \left(k_\lambda+1\right) \left(2k_\lambda+1\right) \\
&= \lambda k_\lambda- \frac{\pi^2}{3|\Omega|^2} k_\lambda^3 - \frac{\pi^2}{2|\Omega|^2}k_\lambda^2
-\frac{\pi^2}{6|\Omega|^2}k_\lambda.
\end{align*}
Since
\begin{align*}
\lambda k_\lambda- \frac{\pi^2}{3|\Omega|^2} k_\lambda^3\le  \frac{2|\Omega|}{3\pi}\lambda^{3/2},
\end{align*}
we see that
\begin{align*}
\sum_{k:\,\lambda_k(\Omega)<\lambda}(\lambda-\lambda_k)&<  \frac{2|\Omega|}{3\pi}\lambda^{3/2}- \frac{\pi^2}{2|\Omega|^2}\left(\frac{k_\lambda }{\frac{|\Omega|}{\pi}\sqrt{\lambda}}\right)^2  \frac{|\Omega|^2}{\pi^2}\lambda
= \frac{2|\Omega|}{3\pi}\lambda^{3/2}- \left(\frac{k_\lambda}{\frac{|\Omega|}{\pi}\sqrt{\lambda}}\right)^2 \frac{\lambda}{2}.
\end{align*}
For $\lambda>\frac{\pi^2}{|\Omega|^2}$, $\frac{k_\lambda}{\frac{|\Omega|}{\pi}\sqrt{\lambda}}\ge 1/2$, so we also have
\begin{align*}
\sum_{k:\,\lambda_k(\Omega)<\lambda}(\lambda-\lambda_k)&< \frac{2|\Omega|}{3\pi}\lambda^{3/2}-  \frac{\lambda}{8}.
\end{align*}

Let us prove \eqref{riesz-dirichlet-n2}. Again we only need to consider $\lambda>\frac{\pi^2}{|\Omega|^2}$. Set $k_\lambda=\lfloor\frac{|\Omega|}{\pi}\sqrt{\lambda}\rfloor$. Note that
\begin{align*}
\sum_{k:\,\lambda_k(\Omega)<\lambda}(\lambda-\lambda_k)^{1/2}&\le \sum_{1\le k\le k_\lambda}(\lambda-\frac{\pi^2}{|\Omega|^2}k^2)^{1/2}=\frac{\pi}{|\Omega|}\sum_{1\le k\le k_\lambda}(\frac{|\Omega|^2}{\pi^2}\lambda-k^2)^{1/2}.
\end{align*}
Let us consider the ball $B(0,\frac{|\Omega|}{\pi}\sqrt{\lambda})$ in the first quadrant in $\rr^2$.
For each $1\le k\le k_\lambda=\lfloor\frac{|\Omega|}{\pi}\sqrt{\lambda}\rfloor$, $(\frac{|\Omega|^2}{\pi^2}\lambda-k^2)^{1/2}$
equals the area of the rectangle $[k-1,k]\times [0, (\frac{|\Omega|^2}{\pi^2}\lambda-k^2)^{1/2}]$, which is contained in $B(0,\frac{|\Omega|}{\pi}\sqrt{\lambda})$ in the first quadrant in $\rr^2$.
Moreover, for each $k$, there is at least a triangle with vertices at $(k-1, (\frac{|\Omega|^2}{\pi^2}\lambda-(k-1)^2)^{1/2})$,
$(k-1, (\frac{|\Omega|^2}{\pi^2}\lambda-k^2)^{1/2})$ and $(k, (\frac{|\Omega|^2}{\pi^2}\lambda-k^2)^{1/2})$
contained in the ball. Moreover, if $k_\lambda\neq\frac{|\Omega|}{\pi}\sqrt{\lambda} $, there is another triangle insider the 1/4 ball
with vertices at  $(k_\lambda, (\frac{|\Omega|^2}{\pi^2}\lambda-k_\lambda^2)^{1/2})$,
$(k_\lambda, (\frac{|\Omega|^2}{\pi^2}\lambda-k_\lambda^2)^{1/2})$ and $(\frac{|\Omega|}{\pi}\sqrt{\lambda},0)$. See Figure \ref{fig3}\footnote[2]{Picture drawn by Xinyi Jiang.}.
\begin{figure}[htbp]
\centering
\includegraphics[scale=0.2]{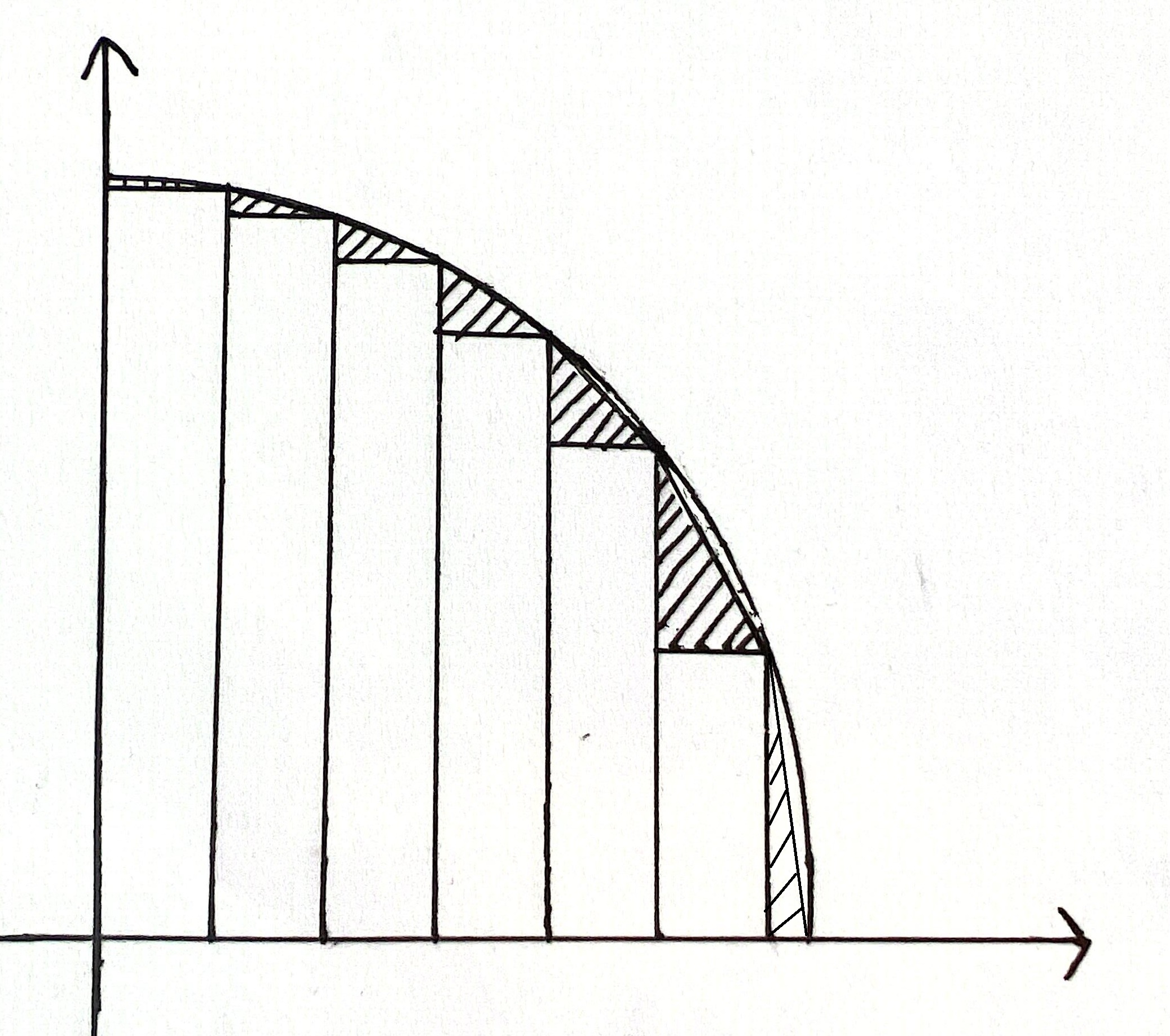}
\caption{The 1/2 sum is controlled by the 1/4 ball}
\label{fig3}
\end{figure}
We therefore see that
\begin{align*}
\frac {\pi}{4}\frac{|\Omega|^2}{\pi^2}\lambda&> \sum_{1\le k\le k_\lambda}(\frac{|\Omega|^2}{\pi^2}\lambda-k^2)^{1/2}+\frac 12\sum_{1\le k\le k_\lambda} 1\times \left((\frac{|\Omega|^2}{\pi^2}\lambda-(k-1)^2)^{1/2}-(\frac{|\Omega|^2}{\pi^2}\lambda-k^2)^{1/2}\right)\\
&\quad+ \frac 12 \left(\frac{|\Omega|}{\pi}\sqrt{\lambda}-k_\lambda\right)\times (\frac{|\Omega|^2}{\pi^2}\lambda-k_\lambda^2)^{1/2}\\
&=\sum_{1\le k\le k_\lambda}(\frac{|\Omega|^2}{\pi^2}\lambda-k^2)^{1/2}+\frac 12 \left((\frac{|\Omega|^2}{\pi^2}\lambda)^{1/2}-(\frac{|\Omega|^2}{\pi^2}\lambda-k_\lambda^2)^{1/2}+\left(\frac{|\Omega|}{\pi}\sqrt{\lambda}-k_\lambda\right)\times (\frac{|\Omega|^2}{\pi^2}\lambda-k_\lambda^2)^{1/2}\right).
\end{align*}
For $x\in [0,1)$, let us consider the function
$$g(x):=k_\lambda+x- \left((k_\lambda+x)^2-k_\lambda^2\right)^{1/2}(1-x).$$
A calculation shows that for $k_\lambda\in\cn$, it holds
\begin{align*}
g(x)&= k_\lambda+x -\left(2k_\lambda x+x^2\right)^{1/2}(1-x)\\
&\ge k_\lambda+x-\sqrt{2k_\lambda+1}\sqrt{x}(1-x)\\
&\ge k_\lambda+x-\frac{2\sqrt 3}{9} \sqrt{2k_\lambda+1}\\
&\ge k_\lambda+x-\frac{2}{3}k_\lambda\\
&\ge \frac{k_\lambda+x}{3}.
\end{align*}
This implies that
\begin{align*}
\frac {\pi}{4}\frac{|\Omega|^2}{\pi^2}\lambda&> \sum_{1\le k\le k_\lambda}(\frac{|\Omega|^2}{\pi^2}\lambda-k^2)^{1/2}+
\frac{1}{2}\frac{|\Omega|}{\pi}\sqrt{\lambda}-\frac{\sqrt 6}{9} \sqrt{\lfloor\frac{|\Omega|}{\pi}\sqrt{\lambda}\rfloor+\frac 12}\\
&\ge  \sum_{1\le k\le k_\lambda}(\frac{|\Omega|^2}{\pi^2}\lambda-k^2)^{1/2} +\frac{1}{6}\frac{|\Omega|}{\pi}\sqrt{\lambda},
\end{align*}
i.e.,
\begin{align*}
 \frac{\pi}{|\Omega|}\sum_{1\le k\le k_\lambda}(\frac{|\Omega|^2}{\pi^2}\lambda-k^2)^{1/2}< \frac{|\Omega|}{4}\lambda- \frac { \sqrt{\lambda}}{2 }+
  \frac{\sqrt 6\pi}{9|\Omega|} \sqrt{\lfloor\frac{|\Omega|}{\pi}\sqrt{\lambda}\rfloor+\frac 12}\le
 \frac {|\Omega|}{4} \lambda\left(1-\frac{2}{3} \frac{1}{|\Omega|\sqrt\lambda}\right).
\end{align*}

Let us prove \eqref{riesz-dirichlet-n2-lower}. Since $\Omega$ is connected, we have
$$\lambda_k(\Omega)=\frac{k^2\pi^2}{|\Omega|^2},\,\forall \ k\in\cn. $$
Set again $k_\lambda=\lfloor\frac{|\Omega|}{\pi}\sqrt{\lambda}\rfloor$. For $\lambda>\frac{\pi^2}{|\Omega|^2}$, we have
\begin{align*}
\sum_{k:\,\lambda_k(\Omega)<\lambda}(\lambda-\lambda_k)^{1/2}&= \sum_{1\le k\le k_\lambda}(\lambda-\frac{\pi^2}{|\Omega|^2}k^2)^{1/2}=\frac{\pi}{|\Omega|}\sum_{1\le k\le k_\lambda}(\frac{|\Omega|^2}{\pi^2}\lambda-k^2)^{1/2}.
\end{align*}
Note that the union of the rectangles $[k-1,k]\times [0, (\frac{|\Omega|^2}{\pi^2}\lambda-k^2)^{1/2}]$, $1\le k\le k_\lambda$, contains the ball
$B(0,\sqrt{\frac{|\Omega|^2}{\pi^2}\lambda-2\sqrt{\frac{|\Omega|^2}{\pi^2}\lambda}+1}) $ intersecting the first quadrant, since
\begin{align*}
\sqrt{(k-1)^2+ (\frac{|\Omega|^2}{\pi^2}\lambda-k^2)}&=\sqrt{\frac{|\Omega|^2}{\pi^2}\lambda-2k+1}\ge \sqrt{\frac{|\Omega|^2}{\pi^2}\lambda-2k_\lambda+1}\\
&\ge  \sqrt{\frac{|\Omega|^2}{\pi^2}\lambda-2\sqrt{\frac{|\Omega|^2}{\pi^2}\lambda}+1}.
\end{align*}
We therefore deduce that
\begin{align*}
\sum_{k:\,\lambda_k(\Omega)<\lambda}(\lambda-\lambda_k)^{1/2}&\ge \frac{\pi}{4}\frac{\pi}{|\Omega|} \left(\frac{|\Omega|^2}{\pi^2}\lambda-2\sqrt{\frac{|\Omega|^2}{\pi^2}\lambda}+1\right)\\
&= \frac{|\Omega|}{4}\lambda \left(1-\frac{2\pi}{|\Omega|\sqrt\lambda}+\frac{\pi^2}{|\Omega|^2\lambda}\right).
\end{align*}
The proof is complete.
\end{proof}

\begin{lem}\label{lem-1d}
Let $\Omega\subset\rr$ be an open bounded set. Then for $p>1$ and $\lambda>\lambda_1=\lambda_1(\Omega)$, it holds that
\begin{align*}
\sum_{k:\,\lambda_k<\lambda } (\lambda-\lambda_k)^{p}
&\le  \frac{2|\Omega|}{3\pi\mathscr{B}(p-1,2)}  \lambda^{p+\frac{1}{2}} \mathscr{B}(p-1,5/2)- C_3(p)\lambda^p,
\end{align*}
where
\begin{align}\label{defn-c3n}
C_3(p)=
\begin{cases}
\frac 18,& p=1,\\
\frac 19, & 1<p\le 2,\\
\frac{1}{8 }\left(\frac{247}{256}\right)^{p-1}, & p>2.
\end{cases}
\end{align}
\end{lem}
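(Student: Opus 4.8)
The plan is to bootstrap from the two Riesz-mean estimates in Lemma \ref{lem-riesz-dirichlet-n1} using the standard integration-by-parts identity that converts a $p$-th Riesz mean into an average of a first (or $1/2$-th) Riesz mean against a Beta-type weight. Concretely, for $p>1$ one has the layer-cake/iteration formula
\begin{align*}
\sum_{k:\,\lambda_k<\lambda}(\lambda-\lambda_k)^p = \frac{1}{\mathscr{B}(p-1,2)}\int_0^\lambda (\lambda-t)^{p-2}\sum_{k:\,\lambda_k<t}(t-\lambda_k)\,dt,
\end{align*}
and likewise with the $1/2$-th Riesz mean and weight $(\lambda-t)^{p-3/2}$ normalized by $\mathscr{B}(p-1,5/2)$ (these are the Aizenman--Lieb type identities; the Beta normalizations are exactly those chosen so that applying them to the leading term of \eqref{riesz-dirichlet-n1} reproduces the stated coefficient $\frac{2|\Omega|}{3\pi\mathscr{B}(p-1,2)}\mathscr{B}(p-1,5/2)\lambda^{p+1/2}$). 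First I would insert the stronger of the two bounds in \eqref{riesz-dirichlet-n1}, namely $\sum(t-\lambda_k)\le \frac{2|\Omega|}{3\pi}t^{3/2}-\frac{t}{8}$ valid for $t>\pi^2/|\Omega|^2$ and $\le 0$ otherwise, into the $(p-2)$-weighted integral; the main term integrates to $\frac{2|\Omega|}{3\pi}\cdot\frac{\mathscr{B}(p-1,5/2)}{\mathscr{B}(p-1,2)}\lambda^{p+1/2}$ by the Beta-function identity $\int_0^\lambda(\lambda-t)^{p-2}t^{3/2}\,dt=\lambda^{p+1/2}\mathscr{B}(p-1,5/2)$, and the $-t/8$ term contributes a negative $\lambda^p$ correction of size $\frac{1}{8\mathscr{B}(p-1,2)}\int_0^\lambda(\lambda-t)^{p-2}t\,dt\cdot(\dots)$, which after simplification is comparable to $c\lambda^p$.

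The delicate point is bookkeeping the negative lower-order term so that it lands with the clean constants $C_3(p)$ in \eqref{defn-c3n}, and this is where the three cases come from. For $p=1$ there is nothing to iterate: \eqref{riesz-dirichlet-n1} already is the claim with $C_3(1)=1/8$ (one checks $\mathscr{B}(0,2)=\infty$ is not an issue since the $p=1$ line of the statement should be read directly from \eqref{riesz-dirichlet-n1}, equivalently the formula is to be interpreted in the limiting sense). For $1<p\le 2$ the weight $(\lambda-t)^{p-2}$ is integrable but the correction integral $\int_{\pi^2/|\Omega|^2}^\lambda(\lambda-t)^{p-2}t\,dt$ must be bounded below; here I expect one throws away the small interval $(0,\pi^2/|\Omega|^2)$ and uses monotonicity plus the crude estimate $t\ge$ something on the bulk, or more cleanly uses the second form in \eqref{riesz-dirichlet-n2} with weight $(\lambda-t)^{p-3/2}$; the resulting constant is pinned at $1/9$ after optimizing, matching $C_3(p)=1/9$. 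For $p>2$ the weight $(\lambda-t)^{p-2}$ vanishes at $t=\lambda$ and is no longer monotone-friendly near the endpoint, so the naive lower bound on the correction degrades; the factor $(247/256)^{p-1}$ signals that one restricts the integral to a sub-interval like $t\in(\lambda/16,\lambda)$ or similar (note $1-9/256=247/256$), on which $(\lambda-t)^{p-2}\ge (\text{fraction})^{p-2}\cdot$ something, losing a geometric factor $(247/256)^{p-1}$ per unit of $p$ while keeping the base constant $1/8$.

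The main obstacle I anticipate is not the leading term — that is a mechanical Beta-function computation — but rather getting the negative $\lambda^p$ term to come out with \emph{exactly} the constants $C_3(p)$ as written, including the somewhat mysterious geometric factor in the $p>2$ case; this requires choosing the right truncation of the $t$-integral and the right one of the two available first-Riesz-mean bounds in \eqref{riesz-dirichlet-n1} (or possibly feeding in \eqref{riesz-dirichlet-n2} instead and iterating the $1/2$-th mean), and then a short optimization. A secondary technical point is justifying the iteration identity itself for open bounded $\Omega\subset\rr$ (the spectrum is discrete with $\lambda_k\ge \pi^2 k^2/|\Omega|^2$, so the sums are finite for each $\lambda$ and Fubini applies without fuss). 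Once the identity and the choice of input bound are fixed, the rest is the routine calculation I would not grind through here.
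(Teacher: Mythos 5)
Your proposal is correct and follows essentially the same route as the paper: the Aizenman--Lieb iteration identity with Beta normalization, fed with the first bound of \eqref{riesz-dirichlet-n1}, the Beta-function evaluation of the leading term, and a lower bound on the correction integral truncated at $v\le\lambda-\frac{9\pi^2}{256|\Omega|^2}\ge\frac{247}{256}\lambda$ (your reading of $1-\frac{9}{256}=\frac{247}{256}$ is exactly the paper's mechanism, and the paper uses \eqref{riesz-dirichlet-n1} throughout rather than \eqref{riesz-dirichlet-n2}). The remaining steps you defer are indeed the routine case computations the paper carries out.
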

\begin{proof}
By the semigroup type property of the Riesz means and \eqref{riesz-dirichlet-n1}, we have
\begin{align}
\sum_{k:\,\lambda_k<\lambda } (\lambda-\lambda_k)^{p}
&\le \mathscr{B}(p-1,2)^{-1} \sum_{k}  \int_0^\infty v^{p-2}(\lambda-\lambda_k-v)_+\,dv \nonumber\\
&\le  \frac{2|\Omega|}{3\pi\mathscr{B}(p-1,2)} \int_0^\infty v^{p-2} (\lambda-v)_+^{3/2}\left(1 - \frac{3\pi}{16|\Omega|\sqrt{(\lambda-v)_+}} \right)_+\,dv\nonumber\\
&\le \frac{2|\Omega|}{3\pi\mathscr{B}(p-1,2)}  \lambda^{p+\frac{1}{2}} \mathscr{B}(p-1,5/2)- \frac{1}{8\mathscr{B}(p-1,2)}    \int_0^{\lambda-\frac{9\pi^2}{256|\Omega|^2}} v^{p-2} (\lambda-v)_+\,dv.
\end{align}
Noticing  that
$\lambda_1(\Omega)\ge \frac{\pi^2}{|\Omega|^2},$
so for $\lambda>\lambda_1(\Omega)$,
$$\lambda-\frac{9\pi^2}{256|\Omega|^2}\ge \frac{247}{256}\lambda.$$
For $1<p\le 2$,
\begin{align}
\frac{1}{8\mathscr{B}(p-1,2)}  \int_0^{\lambda-\frac{9\pi^2}{256|\Omega|^2}} v^{p-2} (\lambda-v)_+\,dv\ge \frac{\lambda^p}{8\mathscr{B}(p-1,2)}  \frac{249}{256} \mathscr{B}(p-1,2)\ge  \frac{\lambda^p}{9}.
\end{align}
For $p>2$, we have
\begin{align*}
  \int_{\lambda-\frac{9\pi^2}{256|\Omega|^2}}^\lambda v^{p-2} (\lambda-v)_+\,dv &= \frac{\lambda^p}{p-1}- \frac{\lambda^p}{p}-\left(\frac{\lambda (\lambda-\frac{9\pi^2}{256|\Omega|^2})^{p-1}}{p-1}-\frac{(\lambda-\frac{9\pi^2}{256|\Omega|^2})^p}{p} \right)\\
&=\frac{\lambda^p}{p(p-1)}- \left(\lambda-\frac{9\pi^2}{256|\Omega|^2}\right)^{p-1} \left(\frac{\lambda}{p-1}-\frac{(\lambda-\frac{9\pi^2}{256|\Omega|^2})}{p} \right)\\
&\le \frac{\lambda^p}{p(p-1)} -\left(\lambda-\frac{9\pi^2}{256|\Omega|^2}\right)^{p-1} \frac{\lambda }{p(p-1)}
\end{align*}
and therefore
\begin{align*}
&\frac{1}{8\mathscr{B}(p-1,2)}    \int_0^{\lambda-\frac{9\pi^2}{256|\Omega|^2}} v^{p-2} (\lambda-v)_+\,dv\\
&= \frac{1}{8\mathscr{B}(p-1,2)}   \left( \int_0^{\lambda} v^{p-2} (\lambda-v)_+\,dv- \int_{\lambda-\frac{9\pi^2}{256|\Omega|^2}}^\lambda v^{p-2} (\lambda-v)_+\,dv\right)\\
&\ge \frac{1}{8\mathscr{B}(p-1,2)} \left(\lambda-\frac{9\pi^2}{256|\Omega|^2}\right)^{p-1} \frac{\lambda }{p(p-1)}\\
&\ge \frac{\lambda^p}{8 }\left(\frac{247}{256}\right)^{p-1}.
\end{align*}
Combining the above estimates yields the desired conclusion.
\end{proof}

In view of Laptev's beautiful theory on product domains \cite{La97} (which has been further employed in Laptev-Weidl \cite{LW00} for Schr\"odinger operator), the Aizenman-Lieb principle \cite{AiLi78},
we have the following
refined  eigenvalue estimates on product domains if one domain satisfies P\'olya's conjecture.
 We note that Larson \cite{Larson17} earlier had obtained similar improvements for higher dimensions ($n_1\ge 3$ below) for more general products but with less explicit constants.
Note that any bounded domain $\Omega\subset\rr$ satisfies P\'olya's conjecture, i.e.,
$\lambda_k(\Omega)\ge \frac{\pi^2}{|\Omega|^2}k^2$ for each $k\in\cn$.
\begin{thm}\label{pertubation-product-1d}
Let $\Omega_1\subset\rr^{n_1}$ and $\Omega_2\subset\rr$ be bounded open domains, $n_1\ge 1$ and $n=n_1+1$. Suppose that
P\'olya's conjecture holds on $\Omega_1$. Then it holds for $\lambda>0$ that
\begin{align*}
\mathcal{N}^D_{\Omega_1\times\Omega_2}(\lambda)&\le  \frac{|\Omega_1||\Omega_2|\omega(n)}{(2\pi)^n}\lambda^{n/2}\left(1- \frac{C_1(n_1)}{|\Omega_2|\sqrt{\lambda}}\right)_+,
\end{align*}
where $C_1(n_1)$ is as in \eqref{defn-c1n},
\begin{align}
C_1(n_1)=\begin{cases}
\frac 23 & n_1=1,\\
\frac{3\pi}{16} & n_1=2,\\
\frac{\pi\mathscr{B}(\frac{n_1}{2}-1,2)}{6\mathscr{B}(\frac{n_1}{2}-1,5/2)} & n_1=3,4,\\
\frac{3\pi\mathscr{B}(\frac{n_1 }{2}-1,2)}{16 \mathscr{B}(\frac{n_1 }{2}-1,5/2)}\left(\frac{247}{256}\right)^{\frac{n_1-2}{2}}  &  n_1\ge 5.
\end{cases}
\end{align}
\end{thm}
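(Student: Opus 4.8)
The plan is to carry out Laptev's product-domain argument \cite{La97}, fed with the sharpened one-dimensional Riesz-mean bounds of Lemma \ref{lem-riesz-dirichlet-n1} and Lemma \ref{lem-1d}. Since the Dirichlet spectrum of $\Omega_1\times\Omega_2$, counted with multiplicity, is $\{\lambda_i(\Omega_1)+\lambda_j(\Omega_2):\,i,j\in\cn\}$, the counting function splits as
\begin{align*}
\mathcal{N}^D_{\Omega_1\times\Omega_2}(\lambda)=\sum_{j:\,\lambda_j(\Omega_2)<\lambda}\mathcal{N}^D_{\Omega_1}\big(\lambda-\lambda_j(\Omega_2)\big).
\end{align*}
P\'olya's conjecture on $\Omega_1$ yields the counting-function inequality $\mathcal{N}^D_{\Omega_1}(\mu)\le \frac{|\Omega_1|\omega(n_1)}{(2\pi)^{n_1}}\mu^{n_1/2}$ for all $\mu>0$ (if $\mathcal{N}^D_{\Omega_1}(\mu)=m$ then $\lambda_m(\Omega_1)<\mu$, so the conjecture gives $m\le\frac{|\Omega_1|\omega(n_1)}{(2\pi)^{n_1}}\mu^{n_1/2}$); inserting it,
\begin{align*}
\mathcal{N}^D_{\Omega_1\times\Omega_2}(\lambda)\le \frac{|\Omega_1|\omega(n_1)}{(2\pi)^{n_1}}\sum_{j:\,\lambda_j(\Omega_2)<\lambda}\big(\lambda-\lambda_j(\Omega_2)\big)^{n_1/2},
\end{align*}
so the theorem reduces to a sharp upper bound for the one-dimensional Riesz mean of $\Omega_2\subset\rr$ at exponent $p=n_1/2$.

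I would then treat three cases matching the branches of $C_1(n_1)$. For $n_1=1$ ($p=\frac12$) apply \eqref{riesz-dirichlet-n2}; for $n_1=2$ ($p=1$) apply \eqref{riesz-dirichlet-n1}, keeping the first term of the minimum; and for $n_1\ge3$ ($p=\frac{n_1}{2}>1$, so its hypothesis holds) apply Lemma \ref{lem-1d} to $\Omega_2$ with that $p$ --- in the last case the semigroup (Aizenman--Lieb) step that upgrades a $\sum(\lambda-\lambda_k)$ bound to a $\sum(\lambda-\lambda_k)^{p}$ bound is already carried out inside the proof of that lemma. In each case the resulting bound is a leading term of order $\lambda^{(n_1+1)/2}$ minus a positive multiple of $\lambda^{n_1/2}$; multiplying through by $\frac{|\Omega_1|\omega(n_1)}{(2\pi)^{n_1}}$, it remains only to verify two arithmetic facts. (When the parenthetical factor on the right of the claimed estimate is negative, $\lambda<\lambda_1(\Omega_1\times\Omega_2)$, the left side is $0$, and the inequality is trivial; this is what the $(\cdot)_+$ absorbs.)

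The remaining verification is purely computational. With $\omega(m)=\pi^{m/2}/\Gamma(\frac m2+1)$ and $\mathscr{B}(a,b)=\Gamma(a)\Gamma(b)/\Gamma(a+b)$ one has
\begin{align*}
\frac{\omega(n_1+1)}{\omega(n_1)}=\sqrt\pi\,\frac{\Gamma(\frac{n_1}{2}+1)}{\Gamma(\frac{n_1}{2}+\frac32)},\qquad \frac{\mathscr{B}(\frac{n_1}{2}-1,\frac52)}{\mathscr{B}(\frac{n_1}{2}-1,2)}=\frac{3\sqrt\pi}{4}\,\frac{\Gamma(\frac{n_1}{2}+1)}{\Gamma(\frac{n_1}{2}+\frac32)}.
\end{align*}
The first identity makes the leading term equal to $\frac{|\Omega_1||\Omega_2|\omega(n)}{(2\pi)^n}\lambda^{n/2}$ (for $n_1=1,2$ one reads off directly the coefficients $\frac{|\Omega_2|}{4}\lambda$ and $\frac{2|\Omega_2|}{3\pi}\lambda^{3/2}$ from \eqref{riesz-dirichlet-n2}, \eqref{riesz-dirichlet-n1}), and for the correction term, combining both identities, the claim amounts to $C_1(n_1)\le \frac{3\pi}{2}\frac{\mathscr{B}(n_1/2-1,2)}{\mathscr{B}(n_1/2-1,5/2)}\,C_3(n_1/2)$ when $n_1\ge3$, which is in fact an equality given the definitions \eqref{defn-c1n}, \eqref{defn-c3n} (the constant value $C_3=1/9$ on $1<p\le2$ yielding the $n_1=3,4$ branch and the factor $(247/256)^{p-1}$ the $n_1\ge5$ branch), while $C_1(1)=\frac23$ and $C_1(2)=\frac{3\pi}{16}$ come straight from \eqref{riesz-dirichlet-n2}, \eqref{riesz-dirichlet-n1}. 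No genuine obstacle arises: the only substantive ingredient is Laptev's spectral splitting together with the one-dimensional estimates already established, and the sole delicate points are the constant bookkeeping and the need to treat $n_1=1,2$ by hand since $p=n_1/2$ then fails the hypothesis $p>1$ of Lemma \ref{lem-1d}.
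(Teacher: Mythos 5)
Your proposal is correct and follows essentially the same route as the paper: the spectral splitting $\mathcal{N}^D_{\Omega_1\times\Omega_2}(\lambda)=\sum_{j:\lambda_j(\Omega_2)<\lambda}\mathcal{N}^D_{\Omega_1}(\lambda-\lambda_j(\Omega_2))$, P\'olya's bound on $\Omega_1$, and then the one-dimensional Riesz-mean estimates \eqref{riesz-dirichlet-n2} for $n_1=1$, \eqref{riesz-dirichlet-n1} for $n_1=2$, and Lemma \ref{lem-1d} with $p=n_1/2$ for $n_1\ge 3$, with the same constant bookkeeping via the Beta--Gamma identities. The handling of small $\lambda$ via the $(\cdot)_+$ and the triviality of the claim when $\lambda\le\lambda_1(\Omega_1\times\Omega_2)$ is also consistent with the paper's argument.
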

\begin{proof}
 Suppose that it holds that
$$\mathcal{N}^D_{\Omega_1}(\lambda)=\#\{\lambda_k(\Omega_1):\,\lambda_k(\Omega_1)<\lambda\}\le \frac{|\Omega_1|\omega(n_1)}{(2\pi)^{n_1}} \lambda^{n_1/2}.$$
Then for all $\lambda>\lambda_1(\Omega)$, where
\begin{align}
\lambda_1(\Omega)=\lambda_1(\Omega_1)+\lambda_1(\Omega_2)\ge  \frac{(2\pi)^2}{|\Omega_1|^{2/n_1}\omega(n_1)^{2/n_1}}+ \frac{\pi^2}{|\Omega_2|^{2}},
\end{align}
we have
\begin{align*}
\mathcal{N}^D_{\Omega}(\lambda)&=\sum_{j:\,\lambda_j(\Omega_2)<\lambda }\#\left\{k:\,\lambda_k(\Omega_1)+\lambda_j(\Omega_2)<\lambda\right\}
=\sum_{j:\,\lambda_j(\Omega_2)<\lambda } \#\left\{k:\,\lambda_k(\Omega_1)<(\lambda -\lambda_j(\Omega_2))_+\right\}\\
&\le \sum_{j:\,\lambda_j(\Omega_2)<\lambda } \frac{|\Omega_1|\omega(n_1)}{(2\pi)^{n_1}} (\lambda-\lambda_j(\Omega_2))^{n_1/2}.
\end{align*}
If $n_1=1$, then by \eqref{riesz-dirichlet-n2}, we have
\begin{align*}
\mathcal{N}^D_{\Omega}(\lambda)&\le \sum_{j:\,\lambda_j(\Omega_2)<\lambda } \frac{|\Omega_1|}{\pi} (\lambda-\lambda_j(\Omega_2))^{1/2}\le  \frac{|\Omega_1||\Omega_2|}{4\pi} \lambda  \left(1- \frac{2}{3|\Omega_2|\sqrt\lambda}\right)_+.
\end{align*}
If $n_1=2$, then by \eqref{riesz-dirichlet-n1} we have
\begin{align*}
\mathcal{N}^D_{\Omega}(\lambda)&\le \sum_{j:\,\lambda_j(\Omega_2)<\lambda } \frac{|\Omega_1|\omega(2)}{(2\pi)^{2}} (\lambda-\lambda_j(\Omega_2))\le \frac{|\Omega_1|\omega(2)}{(2\pi)^{2}}\frac{2|\Omega_2|}{3\pi}\lambda^{3/2}\left(1 - \frac{3\pi}{16|\Omega_2|\sqrt\lambda} \right)_+\\
&=\frac{|\Omega|\omega(3)}{(2\pi)^{3}}\lambda^{3/2}\left(1 - \frac{3\pi}{16|\Omega_2|\sqrt\lambda} \right)_+.
\end{align*}
For  $n_1\ge 3$,  by Lemma \ref{lem-1d}, we see that
\begin{align*}
\mathcal{N}^D_{\Omega}(\lambda)&\le \sum_{j:\,\lambda_j(\Omega_2)<\lambda } \frac{|\Omega_1|\omega(n_1)}{(2\pi)^{n_1}} (\lambda-\lambda_j(\Omega_2))^{n_1/2}\\
&\le \frac{|\Omega_1|\omega(n_1)}{(2\pi)^{n_1}}\left(\frac{2|\Omega_2|}{3\pi\mathscr{B}(\frac{n_1}{2}-1,2)}  \lambda^{\frac{n_1}2+\frac{1}{2}} \mathscr{B}(\frac{n_1}{2}-1,5/2)- C_3(\frac{n_1}{2})\lambda^{\frac{n_1}{2}}\right)_+\\
&=\frac{|\Omega_1||\Omega_2|\omega(n)}{(2\pi)^{n_1+1}} \lambda^{\frac{n_1+1}{2}}\left(1-\frac{3\pi\mathscr{B}(\frac{n_1}{2}-1,2)}{2\mathscr{B}(\frac{n_1}{2}-1,5/2)}\frac{C_3(\frac{n_1}{2})}{|\Omega_2| \sqrt\lambda}\right)_+\\
&=\frac{|\Omega_1||\Omega_2|\omega(n)}{(2\pi)^{n_1+1}} \lambda^{\frac{n_1+1}{2}}\left(1-\frac{C_1(n_1)}{|\Omega_2| \sqrt\lambda}\right)_+.
 \end{align*}
The proof is complete.
\end{proof}

As a corollary,  we have the following result.
\begin{cor}\label{first-k-dirichlet-1d}
Let $\Omega_1\subset\rr^{n_1}$ and $\Omega_2\subset\rr$ be bounded open domains, $n_1\ge 1$ and $n=n_1+1$. Suppose that
P\'olya's conjecture holds on $\Omega_1$. Let $\Omega$ be an open set contained in $\Omega_1\times\Omega_2$, and
$$k_0=\left\lfloor\frac{|\Omega|\omega(n)}{(2\pi)^n}\left[\left(1-\frac{|\Omega|}{|\Omega_1||\Omega_2|}\right)^{-1}\frac{C_1(n_1)}{|\Omega_2|}\right]^{n}\right\rfloor.$$
Then for each $0\le k\le k_0$,  the P\'olya conjecture holds on $\Omega$ for $\lambda_k(\Omega)$, i.e.,
$$k\le \frac{|\Omega|\omega(n)}{(2\pi)^n}\lambda_{k}(\Omega)^{n/2}.$$
\end{cor}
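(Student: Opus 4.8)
The plan is to run Theorem~\ref{pertubation-product-1d} through domain monotonicity of Dirichlet eigenvalues and then translate the resulting bound on the counting function into the asserted lower bound on $\lambda_k(\Omega)$. Introduce the threshold
\begin{align*}
\Lambda_0:=\left[\left(1-\frac{|\Omega|}{|\Omega_1||\Omega_2|}\right)^{-1}\frac{C_1(n_1)}{|\Omega_2|}\right]^{2},
\end{align*}
so that $k_0=\lfloor \tfrac{|\Omega|\omega(n)}{(2\pi)^n}\Lambda_0^{n/2}\rfloor$. First I would observe that, since $\Omega\subset\Omega_1\times\Omega_2$, domain monotonicity of Dirichlet eigenvalues (cf.~\cite{Henrot06}) gives $\lambda_j(\Omega)\ge\lambda_j(\Omega_1\times\Omega_2)$ for all $j$, hence $\mathcal{N}^D_{\Omega}(\lambda)\le\mathcal{N}^D_{\Omega_1\times\Omega_2}(\lambda)$ for every $\lambda>0$.

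Next I would apply Theorem~\ref{pertubation-product-1d} to bound $\mathcal{N}^D_{\Omega_1\times\Omega_2}(\lambda)$ and check that the bound is dominated by $\tfrac{|\Omega|\omega(n)}{(2\pi)^n}\lambda^{n/2}$ for $\lambda$ below $\Lambda_0$; concretely, that
\begin{align*}
\mathcal{N}^D_{\Omega}(\lambda)\le\frac{|\Omega_1||\Omega_2|\omega(n)}{(2\pi)^n}\lambda^{n/2}\left(1-\frac{C_1(n_1)}{|\Omega_2|\sqrt{\lambda}}\right)_+\le\frac{|\Omega|\omega(n)}{(2\pi)^n}\lambda^{n/2}\quad\text{for }0<\lambda\le\Lambda_0.
\end{align*}
The second inequality is the elementary fact that $|\Omega_1||\Omega_2|\bigl(1-\tfrac{C_1(n_1)}{|\Omega_2|\sqrt\lambda}\bigr)_+\le|\Omega|$; it is trivial when the positive part vanishes, and otherwise, after clearing denominators, it is exactly the condition $\sqrt\lambda\le\tfrac{C_1(n_1)}{|\Omega_2|}\bigl(1-\tfrac{|\Omega|}{|\Omega_1||\Omega_2|}\bigr)^{-1}=\sqrt{\Lambda_0}$.

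The last step is to pass from this counting-function estimate to the eigenvalue bound for $0\le k\le k_0$. The case $k=0$ is trivial. For $1\le k\le k_0$, if $\lambda_k(\Omega)\ge\Lambda_0$ then $\tfrac{|\Omega|\omega(n)}{(2\pi)^n}\lambda_k(\Omega)^{n/2}\ge\tfrac{|\Omega|\omega(n)}{(2\pi)^n}\Lambda_0^{n/2}\ge k_0\ge k$, using that $k_0$ is the integer part of $\tfrac{|\Omega|\omega(n)}{(2\pi)^n}\Lambda_0^{n/2}$; and if $\lambda_k(\Omega)<\Lambda_0$, then for any $\lambda\in(\lambda_k(\Omega),\Lambda_0]$ the eigenvalues $\lambda_1(\Omega)\le\cdots\le\lambda_k(\Omega)$ all lie strictly below $\lambda$, so $k\le\mathcal{N}^D_{\Omega}(\lambda)\le\tfrac{|\Omega|\omega(n)}{(2\pi)^n}\lambda^{n/2}$ by the previous display, and letting $\lambda\downarrow\lambda_k(\Omega)$ yields $k\le\tfrac{|\Omega|\omega(n)}{(2\pi)^n}\lambda_k(\Omega)^{n/2}$. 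I do not expect a genuine obstacle here: the corollary is essentially bookkeeping on top of Theorem~\ref{pertubation-product-1d}, and the only points needing care are the handling of the truncation $(\cdot)_+$ (which forces the small case distinction on whether $\lambda_k(\Omega)$ lies above or below $\Lambda_0$) and making the counting-function-to-eigenvalue passage airtight given that $\mathcal{N}^D_\Omega$ is defined with a strict inequality.
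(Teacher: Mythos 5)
Your proposal is correct and follows essentially the same route as the paper: domain monotonicity plus Theorem \ref{pertubation-product-1d}, with the threshold $\Lambda_0$ chosen exactly so that $|\Omega_1||\Omega_2|\bigl(1-\tfrac{C_1(n_1)}{|\Omega_2|\sqrt{\Lambda_0}}\bigr)=|\Omega|$, and then the same two-case comparison of $\lambda_k(\Omega)$ with $\Lambda_0$ together with the limiting argument $\lambda\downarrow\lambda_k(\Omega)$. The only cosmetic difference is that the paper explicitly disposes of the degenerate case $|\Omega|=|\Omega_1||\Omega_2|$ at the outset, which your "clearing denominators" step implicitly assumes away; this does not affect correctness.
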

\begin{proof}
Since $\Omega\subset\Omega_1\times\Omega_2$, we may assume that $|\Omega|<|\Omega_1||\Omega_2|$, otherwise $|\Omega|=|\Omega_1||\Omega_2|$ and the P\'olya conjecture holds
for all Dirichlet eigenvalues of $\Omega$.
Fix $\lambda_0>0$ such that
$$|\Omega|= |\Omega_1||\Omega_2| \left(1-\frac{C_1(n_1)}{|\Omega_2| \sqrt\lambda_0}\right).$$
By the monotonicity of Dirichlet eigenvalues, for $\lambda<\lambda_0$ we have
\begin{align}\label{3.11}
\mathcal{N}^D_{\Omega}(\lambda)&\le \mathcal{N}^D_{\Omega_1\times\Omega_2}(\lambda)\le \frac{|\Omega_1||\Omega_2|\omega(n)}{(2\pi)^n}\lambda^{n/2}\left(1-\frac{C_1(n_1)}{|\Omega_2| \sqrt\lambda}\right)\nonumber\\
&\le \frac{|\Omega|\omega(n)}{(2\pi)^n}\lambda^{n/2}.
\end{align}

Recall that
$$k_0=\left\lfloor\frac{|\Omega|\omega(n)}{(2\pi)^n}\left[\left(1-\frac{|\Omega|}{|\Omega_1||\Omega_2|}\right)^{-1}\frac{C_1(n_1)}{|\Omega_2|}\right]^{n}\right\rfloor\le \frac{|\Omega|\omega(n)}{(2\pi)^n}\lambda_0^{n/2}.$$
Let $0\le k\le k_0$. If $\lambda_{k}(\Omega)<\lambda_0$, then \eqref{3.11} gives that for small enough $\epsilon>0$ such that $\lambda_{k}(\Omega)+\epsilon<\lambda_0$
\begin{align*}
k\le \mathcal{N}^D_{\Omega}(\lambda_{k}(\Omega)+\epsilon)&\le \frac{|\Omega|\omega(n)}{(2\pi)^n}(\lambda_{k}(\Omega)+\epsilon)^{n/2}.
\end{align*}
Letting $\epsilon\to 0$ gives that
\begin{align*}
k\le \frac{|\Omega|\omega(n)}{(2\pi)^n}\lambda_{k}(\Omega)^{n/2}.
\end{align*}
If $\lambda_{k}(\Omega)\ge \lambda_0$, then by the choose of $k_0$, we see that
\begin{align*}
k\le k_0\le \frac{|\Omega|\omega(n)}{(2\pi)^n}\lambda_0^{n/2}\le \frac{|\Omega|\omega(n)}{(2\pi)^n}\lambda_k(\Omega)^{n/2}.
\end{align*}
 The proof is complete.
 \end{proof}
Obviously the above corollary only makes sense when $k_0\ge 3$.
Let us consider $\Omega_1\subset\rr^2$, with $|\Omega_1|=100$, $\Omega_2\subset\rr$ with $|\Omega_2|=1$.
Suppose that P\'olya's conjecture holds on $\Omega_1$ for the Dirichlet eigenvalues.
In this case, $C_1(2)=3\pi/16$,
\begin{align*}
\frac{|\Omega|\omega(n)}{(2\pi)^n}\left[\left(1-\frac{|\Omega|}{|\Omega_1||\Omega_2|}\right)^{-1}\frac{C_1(n_1)}{|\Omega_2|}\right]^{n}\ge \frac{|\Omega| 9\pi}{2^{13}} \left(1-\frac{|\Omega|}{100}\right)^{-3}.
\end{align*}
For any $\Omega\subset \Omega_1\times\Omega_2$, if $|\Omega|\ge 95$, then
$$\frac{|\Omega| 9\pi}{2^{13}} \left(1-\frac{|\Omega|}{100}\right)^{-3}\ge 2623,$$
and at least the first 2623 Dirichlet eigenvalues of $\Omega$ satisfy P\'olya's conjecture. If $|\Omega|\ge 99$ (resp. $|\Omega|\ge 80$), then at least the first 341694 (resp. 34) Dirichlet eigenvalues satisfy P\'olya's conjecture.

\subsection{Strip-tiling domains}\hskip\parindent
We next use Theorem \ref{pertubation-product-1d} together with P\'olya's original method to prove Theorem \ref{main}.
\begin{proof}[Proof of Theorem \ref{main}]
Given a strip-tiling domain $\Omega$, let us use  $\Omega$ to tile $\mathbb{R}^{n-1}\times(0,w(\Omega))$ and fix a such tiling.

For large enough $R>0$, consider the rectangle
$$\mathscr{R}=(-R,R)^{n-1}\times (0,w(\Omega)):=\mathscr{R}_n\times(0,w(\Omega)).$$
Let us denote the surfaces  of $ \mathscr{R}$ which are perpendicular to the $i-th$ direction, $i=1,\cdots,n-1$,
by $\mathscr{R}_{i,j}$, $j=1,2$. Then the surface area of $\mathscr{R}_{i,j}$ equals $R^{n-2}w(\Omega)$.

Let us collect all the copies of $\Omega$ used in the above tiling that locate completely in $\mathscr{R}$,  and denote
them by $\{\Omega_j\}_{j=1}^{J_R}$. See the following Figure \ref{fig4}.
\begin{figure}[htbp]
\centering
\includegraphics[scale=0.6]{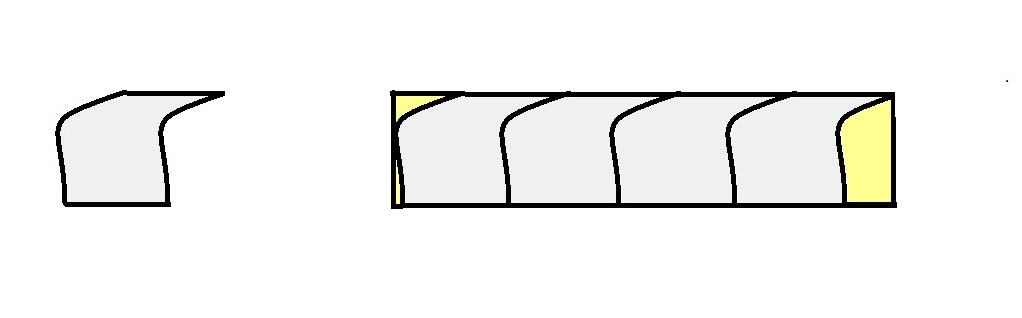}
\caption{Examples of strip-tiling domains in two dimension}
\label{fig4}
\end{figure}
For any point $x$ that belongs to $\mathscr{R}\setminus \cup_{j=1}^{J_R}\overline{\Omega_j}$, there is a copy of $\Omega$ from
the tiling, denoted by $\Omega_x$,
such that $x\in\overline{\Omega_x}$. By the previous choice,  $\Omega_x$ does not locate completely in $\mathscr{R}$,
and so
$$\Omega_x\cap \left(\mathbb{R}^{n-1}\times(0,w(\Omega)) \setminus \mathscr{R}\right)\neq \emptyset.$$
We therefore see that, there are $i\in \{1,\cdots,n-1\}$ and $j\in\{1,2\}$ such that
$$\dist(x,\mathscr{R}_{i,j})\le \mathrm{diam}(\Omega),$$
and hence,
$$\left|\mathscr{R}\setminus \cup_{j=1}^{J_R}\overline{\Omega_j}\right|\le 2(n-1)R^{n-2}w(\Omega)\mathrm{diam}(\Omega).$$
We further see that
$$J_R \ge \frac{|\mathscr{R}|-2(n-1)R^{n-2}w(\Omega)\mathrm{diam}(\Omega)}{|\Omega|}=
\frac{\left(2^{n-1}R-2(n-1)\mathrm{diam}(\Omega)\right)R^{n-2}w(\Omega)}{|\Omega|}.$$

By Theorem \ref{pertubation-product-1d},  we see that for each $k\ge 1$,
\begin{align*}
J_R N^D_{{\Omega}}(\lambda)&\le  N^D_{\cup_{j=1}^{J_R} \overline{\Omega_j}}(\lambda)\le N^D_{\mathscr{R}}(\lambda)\le  \frac{|\mathscr{R}|\omega(n)}{(2\pi)^n}\lambda^{n/2}\left(1- \frac{C_1(n-1)}{|w(\Omega)|\sqrt{\lambda}}\right)_+,
\end{align*}
which  implies that
\begin{align*}
N^D_{{\Omega}}(\lambda)&\le   \frac{\omega(n)}{(2\pi)^n} \frac{|\mathscr{R}|}{J_R}\lambda^{n/2}\left(1- \frac{C_1(n-1)}{|w(\Omega)|\sqrt{\lambda}}\right)_+\\
& \le  \frac{\omega(n)}{(2\pi)^n}  \frac{2^{n-1}R^{n-1}w(\Omega)|\Omega|}{\left(2^{n-1}R-2(n-1)\mathrm{diam}(\Omega)\right)R^{n-2}w(\Omega)}\lambda^{n/2}\left(1- \frac{C_1(n-1)}{|w(\Omega)|\sqrt{\lambda}}\right)_+
\end{align*}
Sending $R$ to $\infty$ and noting that $|\Omega|=|S_n(\Omega)|w(\Omega)$,  we see that for each $k\ge 1$,
\begin{align*}
k&\le \frac{ |\Omega|\omega(n)}{(2\pi)^n}\lambda_k(\Omega)^{n/2}-C_1(n-1)|S_n(\Omega)|\frac{\omega(n)}{(2\pi)^n}\lambda_k(\Omega)^{\frac{n-1}{2}}.
\end{align*}
The proof is complete.
\end{proof}

\begin{rem}\label{product-23} \rm
In Theorem \ref{pertubation-product-1d} and Theorem \ref{main}, we only keep one lower order term.
One can have a better estimate especially for large eigenvalues, by keeping more lower order term.
 We present it below in two and three dimensions.

Let $\Omega_1\subset\rr^{n_1}$ and $\Omega_2\subset\rr$ be bounded open domains, $n_1=1,2$. If $n_1=1$, then by \eqref{riesz-dirichlet-n2}, we have
\begin{align*}
\mathcal{N}^D_{\Omega}(\lambda)&\le \sum_{j:\,\lambda_j(\Omega_2)<\lambda } \frac{|\Omega_1|}{\pi} (\lambda-\lambda_j(\Omega_2))^{1/2}
\le  \frac{|\Omega_1|}{\pi}\left(\frac{|\Omega_2|}{4}\lambda- \frac { \sqrt{\lambda}}{2 }+
  \frac{\sqrt 6\pi}{9|\Omega_2|} \sqrt{\lfloor\frac{|\Omega_2|}{\pi}\sqrt{\lambda}\rfloor+\frac 12}\right)_+\\
  &\le \frac{|\Omega_1||\Omega_2|}{4\pi}\lambda\left(1-\frac{2}{|\Omega_2|\sqrt\lambda}+\frac{4\sqrt 6\pi}{9|\Omega_2|^2\lambda} \sqrt{\lfloor\frac{|\Omega_2|}{\pi}\sqrt{\lambda}\rfloor+\frac 12}\right)_+.
\end{align*}
If $n_1=2$, we assume that
P\'olya's conjecture holds on $\Omega_1$.
 By \eqref{riesz-dirichlet-n1} we have
\begin{align*}
\mathcal{N}^D_{\Omega}(\lambda)&\le \sum_{j:\,\lambda_j(\Omega_2)<\lambda } \frac{|\Omega_1|\omega(2)}{(2\pi)^{2}} (\lambda-\lambda_j(\Omega_2))\\
&\le \frac{|\Omega_1||\Omega_2|}{6\pi^2} \lambda^{3/2} \left(1 - \left(\frac{\lfloor\frac{|\Omega_2|\sqrt{\lambda}}{\pi}\rfloor}{\frac{|\Omega_2|\sqrt{\lambda}}{\pi}}\right)^2\frac{3\pi}{4|\Omega_2|\sqrt\lambda} \right)_+.
\end{align*}

Let $\Omega$ be a strip-tiling domain that can tile $\rr^{n-1}\times (0,w(\Omega))$, $n=2,3$, with $S_n(\Omega)$
be a face in the $n$-direction.
From the proof of Theorem \ref{main} we deduce similar estimate as following.
For the case $n=2$, it holds that
\begin{align*}
\mathcal{N}^D_{\Omega}(\lambda)&\le \frac{|\Omega|}{4\pi}\lambda\left(1-\frac{2}{|w(\Omega)|\sqrt\lambda}+\frac{4\sqrt 6\pi}{9|w(\Omega)|^2\lambda} \sqrt{\lfloor\frac{|w(\Omega)|}{\pi}\sqrt{\lambda}\rfloor+\frac 12}\right)_+,
\end{align*}
and for $n=3$, it holds that
\begin{align*}
\mathcal{N}^D_{\Omega}(\lambda)&\le
\frac{|\Omega|}{6\pi^2} \lambda^{3/2} \left(1 - \left(\frac{\lfloor\frac{|w(\Omega)|\sqrt{\lambda}}{\pi}\rfloor}{\frac{|w(\Omega)|\sqrt{\lambda}}{\pi}}\right)^2\frac{3\pi}{4|w(\Omega)|\sqrt\lambda} \right)_+.
\end{align*}
\end{rem}
Similar to Corollary \ref{first-k-dirichlet-1d} we have the following perturbation result.
\begin{cor}\label{first-k-dirichlet-strip}
Let $\Omega$ be a strip-tiling domain that can tile $\rr^{n-1}\times (0,w(\Omega))$, $n\ge 2$. Let $S_n(\Omega)$ be a side face of $\Omega$ in the $n$-th direction.
Let $\Omega_0$ be an open set contained in $\Omega$, and
$$k_0=\left\lfloor\frac{|\Omega_0|\omega(n)}{(2\pi)^n}\left[\left(1-\frac{|\Omega_0|}{|\Omega|}\right)^{-1}\frac{C_1(n-1)}{|w(\Omega)|}\right]^{n}\right\rfloor.$$
Then for each $0\le k\le k_0$,  the P\'olya conjecture holds on $\Omega_0$ for $\lambda_k(\Omega_0)$, i.e.,
$$k\le \frac{|\Omega_0|\omega(n)}{(2\pi)^n}\lambda_{k}(\Omega_0)^{n/2}.$$
\end{cor}

\section{P\'olya's conjecture on a class of domains}\hskip\parindent
Let us start with proving the  qualitative result regarding removing general smooth set.
Theorem \ref{qualitative-polya-rectangle} follows immediately from the following result.

\begin{thm}\label{qualitative-polya}
(i) Suppose that $\Omega$  is a strip-tiling domain  in $\rn$, $n\ge 2$.
Let $S_n(\Omega)$ be a side face of $\Omega$ in the $n$-th direction.
Suppose that $\Omega_3$ is a smooth domain contained in $\Omega$.
There exists $C_P=C(\Omega_3,n)$ such that if $|S_n(\Omega)|\ge C_P$ then the P\'olya conjecture holds on $\Omega\setminus\overline{\Omega_3}$.

(ii) Suppose that $\Omega_1\subset\rr^{n-1}$ satisfies P\'olya's conjecture, $n\ge 2$, $\Omega_2$ is an interval in $\rr$ and
$\Omega_3$ is a smooth domain contained in $\Omega_1\times\Omega_2$.
There exists $C_P=C(\Omega_3,n)$ such that if $|\Omega_1|\ge C_P$ then the P\'olya conjecture holds on $\Omega_1\times\Omega_2\setminus\overline{\Omega_3}$.
\end{thm}
\begin{rem}\rm
The requirement of $\Omega_3$ being smooth can be slightly weakened to
being smooth except having edges or corners, see \cite[p.871]{Seeley80}.
\end{rem}
\begin{proof}[Proof of Theorem \ref{qualitative-polya}] We only prove (ii),  the proof of (i) is the same by using Theorem \ref{main}.

Let $\Omega=\Omega_1\times\Omega_2\setminus\overline{\Omega_3}$.
Up to a scaling, we may assume that $|\Omega_3|=1$. It follows from Seeley \cite{Seeley78,Seeley80} (see also \cite[Corollary 17.5.11 \& (17.5.16)']{Hormander3}) that
for some $\Lambda>0$, it holds for $\lambda>\Lambda$ that
\begin{align}\label{seeley}
\left|\mathcal{N}^D_{\Omega_3}(\lambda)- \frac{|\Omega_3|\omega(n)}{(2\pi)^{n}} \lambda^{n/2}\right|\le C(\Omega_3)\lambda^{\frac{n-1}{2}}.
\end{align}
We assume that
$$|\Omega_1|> \max\left\{\frac{C(\Omega_3)(2\pi)^n}{\omega(n)C_1(n-1)},\frac{\sqrt{2\Lambda}}{C_1(n-1)}\right\}. $$

By Corollary \ref{first-k-dirichlet-1d}, P\'olya's conjecture holds for the first $k_0$ Dirichlet eigenvalues, where
$$k_0=\left\lfloor \frac{|\Omega|\omega(n)}{(2\pi)^n}\left[\left(1-\frac{|\Omega|}{|\Omega_1||\Omega_2|}\right)^{-1}\frac{C_1(n-1)}{|\Omega_2|}\right]^{n}\right\rfloor=
\left\lfloor \frac{|\Omega|\omega(n)}{(2\pi)^n}(C_1(n-1)|\Omega_1|)^{n}\right\rfloor.$$
It holds
$$\lambda_{k_0}(\Omega)^{n/2}\ge \frac{(2\pi)^n} {|\Omega|\omega(n)}\left\lfloor \frac{|\Omega|\omega(n)}{(2\pi)^n}(C_1(n-1)|\Omega_1|)^{n}\right\rfloor\ge \frac 12 (C_1(n-1)|\Omega_1|)^{n},$$
and hence,
$$\lambda_{k_0}(\Omega)\ge  \frac 12 (C_1(n-1)|\Omega_1|)^{2}. $$

Since $|\Omega_1|>\sqrt{2\Lambda}/C_1(n-1)$, it holds that
$$\lambda_{k_0}(\Omega)>\Lambda. $$
By \eqref{seeley} and $|\Omega_3|=1$, we see that for any $\lambda_k(\Omega)\ge \lambda_{k_0}(\Omega)$,
\begin{align*}
\mathcal{N}^D_{\Omega_3}(\lambda_k(\Omega))\ge  \frac{\omega(n)}{(2\pi)^{n}} \lambda_k(\Omega)^{n/2}- C(\Omega_3)\lambda_k(\Omega)^{\frac{n-1}{2}}.
\end{align*}
This implies on $\Omega_3$, there are   $\mathcal{N}^D_{\Omega_3}(\lambda_k(\Omega))$ eigenvalues smaller than $\lambda_k(\Omega)$. Since $\Omega$ and $\Omega_3$ are disjoint,
$\lambda_k(\Omega)$ corresponds to some $\lambda_{k+\tilde k}(\Omega\cup\Omega_3)$ for some
$$\tilde k\ge \mathcal{N}^D_{\Omega_3}(\lambda_k(\Omega))\ge \frac{ \omega(n)}{(2\pi)^{n}} \lambda_k(\Omega)^{n/2}- C(\Omega_3)\lambda_k(\Omega)^{\frac{n-1}{2}}.$$

Since $\Omega\cup\Omega_3\subset\Omega_1\times\Omega_2$ and $|\Omega\cup\Omega_3|=|\Omega_1\times\Omega_2|$, the monotonicity of Dirichlet eigenvalues together with
Theorem \ref{pertubation-product-1d} implies that
\begin{align*}
\mathcal{N}^D_{\Omega\cup \Omega_3}(\lambda)\le \mathcal{N}^D_{\Omega_1\times\Omega_2}(\lambda)&\le  \frac{|\Omega_1||\Omega_2|\omega(n)}{(2\pi)^n}\lambda^{n/2}\left(1- \frac{C_1(n-1)}{|\Omega_2|\sqrt{\lambda}}\right)_+,
\end{align*}
and therefore
\begin{align*}
 \frac{|\Omega_1||\Omega_2|\omega(n)}{(2\pi)^n}\lambda_k(\Omega)^{n/2}& =\frac{|\Omega_1||\Omega_2|\omega(n)}{(2\pi)^n}\lambda_{k+\tilde k}(\Omega\cup\Omega_3)^{n/2}\\
 &\ge k+\tilde k+ \frac{C_1(n-1)}{|\Omega_2|\sqrt{\lambda}}  \frac{|\Omega_1||\Omega_2|\omega(n)}{(2\pi)^n}\lambda_k(\Omega)^{\frac{n-1}{2}}\\
 &\ge k+\frac{ \omega(n)}{(2\pi)^{n}} \lambda_k(\Omega)^{n/2}- C(\Omega_3)\lambda_k(\Omega)^{\frac{n-1}{2}}+  \frac{C_1(n-1)|\Omega_1|\omega(n)}{(2\pi)^n}\lambda_k(\Omega)^{\frac{n-1}{2}}.
\end{align*}
Since $|\Omega_1|\ge \frac{C(\Omega_3)(2\pi)^n}{\omega(n)C_1(n-1)}$, we finally see that
\begin{align*}
 \frac{|\Omega|\omega(n)}{(2\pi)^n}\lambda_k(\Omega)^{n/2}& =\frac{|\Omega_1||\Omega_2|\omega(n)}{(2\pi)^n}\lambda_k(\Omega)^{n/2}-\frac{\omega(n)}{(2\pi)^n}\lambda_k(\Omega)^{n/2} \\
 &\ge k+  \lambda_k(\Omega)^{\frac{n-1}{2}}\left(\frac{C_1(n-1)|\Omega_1|\omega(n)}{(2\pi)^n}- C(\Omega_3)\right)\\
 &\ge k.
\end{align*}
The proof is complete.
\end{proof}
\begin{rem}\rm
Note that \eqref{seeley} can be stated for all $\lambda>0$ without referring to $\Lambda$, but generally with larger constant $C(\Omega_3)$.
We state in this way to show that the combined argument, that is for small eigenvalues using Corollary \ref{first-k-dirichlet-1d} while for large eigenvalues using the subtracting argument, will give better estimate, see Remark \ref{rem-better-constant} below.
\end{rem}

We next prove the concrete examples of domains obtained by removing cubes, Theorem \ref{polya-rectangle-minus}.
Let us start with the two dimensional case.

\subsection{The case of two dimension}
\hskip\parindent
Let $Q\subset\rr^n$ be an open unit cube, $n=2$.  Then the Dirichlet eigenvalues of $Q$ has the form
$$\left\{\lambda_k:\, \lambda_k=\pi^2(a^2+b^2),\,a,b\in\mathbb{N}\right\}.$$
Note that for a given $\lambda_k$,
$\mathcal{N}_Q^D(\lambda_k)$ is dominated by the number of unit cubes contained in the first quadrant intersecting the ball
$B(0, \sqrt{\lambda_k}/\pi)$.
So it is readily to see that for all $k\in\cn$ that
$ \lambda_k \ge 4\pi k.$
In fact Theorem \ref{pertubation-product-1d} gives a better result.
We however shall need the opposite estimates.

\begin{lem}\label{opposite-cube}
 Let $Q\subset\rr^2$ be a cube with side length $\ell(Q)$ in $\rr^2$.

(i) For any $\lambda>2\pi^2/\ell(Q)^2$, it holds that
$$\mathcal{N}_Q^D(\lambda)\ge    \frac{\ell(Q)^2}{4\pi}\lambda- \frac{\omega(2)2^{3/2}\pi\ell(Q)}{(2\pi)^2}\sqrt\lambda +\frac{\pi}{2}.$$

(ii) For any $\lambda>0$,  it holds that
$$\mathcal{N}_Q^D(\lambda)\ge    \frac{\ell(Q)^2}{4\pi}\lambda- \frac{\omega(2)2^{3/2}\pi\ell(Q)}{(2\pi)^2}\sqrt\lambda.$$
 \end{lem}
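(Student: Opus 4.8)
The plan is to combine the explicit Dirichlet spectrum of the square with an elementary lattice‑point covering argument. Since the Dirichlet eigenvalues of $Q$ are $\frac{\pi^2}{\ell(Q)^2}(a^2+b^2)$ with $a,b\in\cn$ (a scaling of the case $\ell(Q)=1$ recalled just above), writing $R:=\frac{\ell(Q)\sqrt\lambda}{\pi}$ one has
$$\mathcal{N}_Q^D(\lambda)=\#\{(a,b)\in\cn^2:\ a^2+b^2<R^2\},$$
so that $\mathcal{N}_Q^D(\lambda)$ is exactly the number of integer points strictly inside the quarter disc of radius $R$ that have both coordinates $\ge 1$.

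First I would attach to each counted point $(a,b)$ the closed unit square $S_{a,b}:=[a-1,a]\times[b-1,b]$; these have pairwise disjoint interiors and unit area, so $\mathcal{N}_Q^D(\lambda)\ge\bigl|\bigcup_{a,b}S_{a,b}\bigr|$. The heart of the argument is the geometric claim that, when $R>\sqrt2$, this union contains the open quarter disc $\{x,y\ge 0:\ x^2+y^2<(R-\sqrt2)^2\}$: given such a point $(x,y)$, put $a:=\max\{1,\lceil x\rceil\}$ and $b:=\max\{1,\lceil y\rceil\}$, so that $(x,y)\in S_{a,b}$, $1\le a\le x+1$ and $1\le b\le y+1$, whence
$$a^2+b^2\le(x+1)^2+(y+1)^2=x^2+y^2+2x+2y+2\le\bigl(\sqrt{x^2+y^2}+\sqrt2\bigr)^2<R^2,$$
the middle step reducing to $(x-y)^2\ge0$ and the last to $\sqrt{x^2+y^2}+\sqrt2<R$. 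Hence $(a,b)$ is one of the counted points and the claim follows.

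Consequently, for $R>\sqrt2$, i.e. $\lambda>2\pi^2/\ell(Q)^2$,
$$\mathcal{N}_Q^D(\lambda)\ge\frac{\pi}{4}(R-\sqrt2)^2=\frac{\pi R^2}{4}-\frac{\pi\sqrt2}{2}R+\frac{\pi}{2},$$
and substituting $R=\ell(Q)\sqrt\lambda/\pi$ together with $\omega(2)=\pi$ rewrites the right‑hand side as $\frac{\ell(Q)^2}{4\pi}\lambda-\frac{\omega(2)2^{3/2}\pi\ell(Q)}{(2\pi)^2}\sqrt\lambda+\frac{\pi}{2}$, which is (i). For (ii) I would simply note that for $\lambda>2\pi^2/\ell(Q)^2$ the bound follows from (i) by discarding the nonnegative term $\pi/2$, while for $\lambda\le 2\pi^2/\ell(Q)^2$ the quantity $\frac{\ell(Q)^2}{4\pi}\lambda-\frac{\ell(Q)}{\sqrt2}\sqrt\lambda$ is already negative (indeed it stays negative for all $\lambda<8\pi^2/\ell(Q)^2$), so (ii) holds trivially from $\mathcal{N}_Q^D(\lambda)\ge0$.

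The one genuinely delicate point is the covering claim, and within it the treatment of points lying on or near the coordinate axes, where one must use $\max\{1,\lceil\cdot\rceil\}$ instead of a plain ceiling to keep $(a,b)\in\cn^2$; the inequality $x+y\le\sqrt2\sqrt{x^2+y^2}$ is precisely what produces the sharp shift $\sqrt2$, hence the stated constant, and I would keep all inequalities strict so that the lattice points produced satisfy $\lambda_{a,b}<\lambda$, consistent with the convention $\mathcal{N}^D_Q(\lambda)=\#\{\lambda_k<\lambda\}$.
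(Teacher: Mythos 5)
Your proof is correct and follows essentially the same route as the paper: identify $\mathcal{N}_Q^D(\lambda)$ with the count of lattice points $(a,b)\in\cn^2$ with $a^2+b^2<R^2$ for $R=\ell(Q)\sqrt\lambda/\pi$, and bound it below by the area of the quarter disc of radius $R-\sqrt 2$ via the attached unit squares. Your explicit verification of the covering claim (the choice $a=\max\{1,\lceil x\rceil\}$ and the Cauchy--Schwarz step $2(x+y)\le 2\sqrt 2\sqrt{x^2+y^2}$) is a careful spelling-out of what the paper asserts in one line, and the handling of small $\lambda$ in (ii) matches the paper's.
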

\begin{proof}
We have
\begin{align*}
\mathcal{N}_Q^D(\lambda)&=\#\left\{\lambda_k<\lambda:\, \lambda_k=\frac{\pi^2}{\ell(Q)^2}(a^2+b^2),\,a,b\in\mathbb{N}\right\}.
\end{align*}
So the number $\mathcal{N}_Q^D(\lambda)$ is not less than the number of the unit cubes contained
in the first quadrant intersecting the ball
$B(0, \ell(Q)\sqrt{\lambda}/\pi-\sqrt 2)$, where $\sqrt 2$ is the diameter of the unit cube.
This implies that for any $\lambda>2\pi^2/\ell(Q)^2$,
\begin{align*}
\mathcal{N}_Q^D(\lambda)&\ge \frac 14 |B(0,\ell(Q)\sqrt{\lambda}/\pi-\sqrt 2 )| =\frac{\pi}{4} \left(\ell(Q)\sqrt{\lambda}/\pi-\sqrt 2 \right)^2\\
&\ge \frac{\ell(Q)^2}{4\pi}\lambda- \frac{\sqrt 2\ell(Q)}{2}\sqrt\lambda +\frac{\pi}{2}.
\end{align*}

Noting that for $\lambda\le 2\pi^2/\ell(Q)^2$,
$$\frac{\ell(Q)^2}{4\pi}\lambda- \frac{\sqrt 2\ell(Q)}{2}\sqrt\lambda\le 0\le \mathcal{N}_Q^D(\lambda),$$
the conclusion is obvious. The proof is complete.
\end{proof}
\begin{rem}\label{rect-cuboid}\rm
Using \eqref{riesz-dirichlet-n2-lower} one can give a lower bound for $\mathcal{N}^D_{\mathscr{R}}(\lambda)$ for any rectangle $\mathscr{R}$ in $\rr^2$.
Inductively, a lower bound for the eigenvalue counting function can be explicitly given on any cuboid in higher dimension.
\end{rem}

Let us prove the planer case from Theorem \ref{polya-rectangle-minus}.
\begin{proof}[Proof of the case $n=2$ in Theorem \ref{polya-rectangle-minus}]
Let $\Omega$ be a strip-tiling domain that can tile $\rr\times (0,w(\Omega))$, and let $S_n(\Omega)$ be the surface  of a side face of $\Omega$ in the $n$-th direction.
Denote by $\Omega_0=\Omega\setminus\overline{\cup_{k=1}^\infty Q_{k}}$.
Let
 $$\mathscr{V}_Q:=\left|\cup_{k=1}^\infty Q_{k}\right|=\sum_{k=1}^\infty \ell(Q_k)^n.$$
Since  the P\'olya conjecture is scaling invariant, in Definition \ref{defn-cube}, we have assumed that
the largest cube in $\{Q_k\}$ has side length one.  We further have that $w(\Omega)\ge 1$ and
 $$\mathscr{V}_Q\le \mathscr{S}_Q<\infty.$$

By Theorem \ref{main} we have for $\lambda>0$ that
\begin{align*}
\mathcal{N}^D_{\Omega}(\lambda)&\le  \frac{|\Omega| }{4\pi}\lambda \left(1- \frac{{2}}{3|w(\Omega)|\sqrt{\lambda}}\right)_+.
\end{align*}
By Corollary \ref{first-k-dirichlet-strip} we have for
$$k_0=\left\lfloor\frac{|\Omega_0| }{4\pi}\left[\left(1-\frac{|\Omega_0|}{|\Omega|}\right)^{-1}\frac{2}{3|w(\Omega)|}\right]^{2}\right\rfloor= \left\lfloor\frac{|\Omega_0|}{4\pi}\left[ \frac{2|S_n(\Omega)|}{3\mathscr{V}_Q}\right]^{2}\right\rfloor,$$
the first $k_0$ Dirichlet eigenvalues of $\Omega_0$ satisfy P\'olya's conjecture, that is, for $1\le i\le k_0$,
$$\frac{|\Omega_0|\omega(2)}{(2\pi)^2}\lambda_i(\Omega_0)\ge i.$$
In particular, since $|S_n(\Omega)|\ge 2\sqrt 2 \pi \mathscr{S}_Q\ge 2\sqrt 2 \pi \mathscr{V}_Q $,
$$|\Omega_0|=|\Omega|-\mathscr{V}_Q\ge  (2\sqrt 2 \pi-1) \mathscr{V}_Q\ge (2\sqrt 2 \pi-1), $$
we find that
$$\lambda_{k_0}(\Omega_0)\ge \frac{4\pi}{|\Omega_0|}\left(\frac{|\Omega_0|}{4\pi}\left[ \frac{2|S_n(\Omega)|}{3\mathscr{V}_Q}\right]^{2}-1\right)>33. $$

By Remark \ref{product-23}, we have
\begin{align*}
\mathcal{N}^D_{\Omega}(\lambda)&\le \frac{|\Omega|}{4\pi}\lambda\left(1-\frac{2}{|w(\Omega)|\sqrt\lambda}+\frac{4\sqrt 6\pi}{9|w(\Omega)|^2\lambda} \sqrt{\lfloor\frac{|w(\Omega)|}{\pi}\sqrt{\lambda}\rfloor+\frac 12}\right)_+.
\end{align*}
This together with $|w(\Omega)|\ge 1$ implies that  for $\lambda>33$, it holds
\begin{align}\label{improved-2d-product}
 \mathcal{N}^D_{\Omega}(\lambda)&\le   \frac{|\Omega|}{4\pi}\lambda\left(1- \frac{1}{|w(\Omega)| \sqrt{\lambda}}\right).
\end{align}

Now consider the union of open sets $\Omega_0$, $Q_{k}$, $k\in\cn$,  which are disjoint and contained in $\Omega$, but
satisfy
$$|\Omega|=|\Omega_0\cup\cup_{k\in\cn} Q_{k}|.$$
So by monotonicity of Dirichlet eigenvalues, P\'olya's conjecture holds on $\Omega_0\cup\cup_{1\le k<\infty}Q_{k}$, and
\begin{align*}
\mathcal{N}^D_{\Omega_0\cup\cup_{k\in\cn}Q_{k}}(\lambda)\le \mathcal{N}^D_{\Omega}(\lambda).
\end{align*}
Moreover, by Lemma \ref{opposite-cube} (ii), we see that for $\lambda>0$,
\begin{align*}
\mathcal{N}^D_{\Omega_0\cup\cup_{k\in\cn}  Q_{k}}(\lambda)&= \mathcal{N}^D_{\Omega_0}(\lambda)+\sum_{k\in\cn}  \mathcal{N}^D_{Q_{k}}(\lambda)\\
&\ge \mathcal{N}^D_{\Omega_0}(\lambda)+\sum_{k\in\cn} \left( \frac{\ell(Q_k)^2}{4\pi}\lambda- \frac{\sqrt 2 \ell(Q_k)}{2}\sqrt\lambda\right).
\end{align*}
Combining the above two estimates, we deduce that for $\lambda\ge \lambda_{k_0}(\Omega_0)>33$,
\begin{align*}
\mathcal{N}^D_{\Omega_0}(\lambda)&\le  \frac{|\Omega|}{4\pi}\lambda\left(1- \frac{1}{|w(\Omega)| \sqrt{\lambda}}\right)-\sum_{k\in\cn} \left( \frac{\ell(Q_k)^2}{4\pi}\lambda- \frac{\sqrt 2 \ell(Q_k)}{2}\sqrt\lambda\right)\\
&=\frac{|\Omega|-\mathscr{V}_Q }{4\pi}\lambda  -\sqrt{\lambda} \left(\frac{|S_n(\Omega)|}{4\pi}-\frac{\sqrt 2}{2}\mathscr{S}_Q\right)\\
&=\frac{|\Omega_0| }{4\pi}\lambda -\sqrt{\lambda} \left(\frac{|S_n(\Omega)|}{4\pi}-\frac{\sqrt 2}{2}\mathscr{S}_Q\right).
\end{align*}
Since $|S_n(\Omega)|\ge 2\sqrt 2 \pi \mathscr{S}_Q $, we see that
\begin{align*}
\mathcal{N}^D_{\Omega_0}(\lambda)&\le \frac{|\Omega_0| }{4\pi}\lambda,
\end{align*}
which completes the proof.
\end{proof}
\begin{rem}\label{rem-better-constant}\rm
Note that in Theorem \ref{polya-rectangle-minus}, for $n\ge 4$,
$$C_2(n-1)=\frac{n^{3/2}\pi}{C_1(n-1)},$$
while for $n=2,3$
$$
\frac{n^{3/2}\pi}{C_1(n-1)}>C_2(n-1)
=\begin{cases}
2\sqrt2 \pi, &n=2,\\
9\sqrt 3, & n=3.
\end{cases}
$$
This is because in the proof of two and three dimensional case, we use Corollary \ref{first-k-dirichlet-1d}, which allows us
to reduce the problem for large eigenvalues, in which cases by using Lemma \ref{lem-riesz-dirichlet-n1} we have a better estimate than
the uniform estimate from Theorem \ref{pertubation-product-1d}, see \eqref{improved-2d-product} and \eqref{stronger-3d-dirichlet}.
\end{rem}
\subsection{Three and higher dimensions}
\hskip\parindent
We next provide the proof for $n\ge 3$.
Similar to Lemma \eqref{opposite-cube}, we have
\begin{lem}\label{opposite-cube-hign}
Let $Q\subset\rr^n$ be a  cube.

(i)  For $n=3$, for any $\lambda>3\pi^2/\ell(Q)^2$, it holds that
$$\mathcal{N}_Q^D(\lambda)\ge  \frac{\ell(Q)^3}{6\pi^2}\lambda^{3/2} - \frac{\omega(3)3^{3/2}\pi \ell(Q)^{2}}{(2\pi)^3}\lambda+1,$$
and for all $\lambda>0$ that
$$\mathcal{N}_Q^D(\lambda)\ge  \frac{\ell(Q)^3}{6\pi^2}\lambda^{3/2} -\frac{\omega(3)3^{3/2}\pi \ell(Q)^{2}}{(2\pi)^3}\lambda.$$

(ii) For $n\ge 4$, for any $\lambda>n^3\pi^2/\ell(Q)^2$, it holds that
$$\mathcal{N}_Q^D(\lambda)\ge  \frac{\omega(n)\ell(Q)^n}{(2\pi)^n} \lambda^{n/2} -  \frac{\omega(n)n^{3/2}\pi \ell(Q)^{n-1}}{(2\pi)^n}\lambda^{\frac{n-1}{2}}+2^{n-2}\pi^{n/2},$$
and for all $\lambda>0$ that
$$\mathcal{N}_Q^D(\lambda)\ge  \frac{\omega(n)\ell(Q)^n}{(2\pi)^n} \lambda^{n/2} -  \frac{\omega(n)n^{3/2}\pi \ell(Q)^{n-1}}{(2\pi)^n}\lambda^{\frac{n-1}{2}}.$$
\end{lem}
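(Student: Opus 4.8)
The plan is to run, in general dimension, the same elementary geometric argument already used for Lemma \ref{opposite-cube} in dimension two, and then carry out a binomial estimate together with some bookkeeping.

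Recall that the Dirichlet eigenvalues of a cube $Q$ of side length $\ell(Q)$ in $\rr^n$ are $\frac{\pi^2}{\ell(Q)^2}(a_1^2+\cdots+a_n^2)$ with $a=(a_1,\dots,a_n)\in\cn^n$, so $\mathcal{N}_Q^D(\lambda)$ equals the number of $a\in\cn^n$ with $|a|<R$, where $R:=\ell(Q)\sqrt{\lambda}/\pi$. I would attach to each such $a$ the closed unit cube $C_a:=\prod_{i=1}^n[a_i-1,a_i]$. These cubes tile $[0,\infty)^n$, each has diameter $\sqrt n$, and $a$ is the vertex of $C_a$ farthest from the origin; hence any $C_a$ meeting $B(0,R-\sqrt n)$ satisfies $|a|<R$ and thus contributes to $\mathcal{N}_Q^D(\lambda)$. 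Since those $C_a$ together cover $B(0,R-\sqrt n)\cap[0,\infty)^n$ and each has unit volume, one gets for $R>\sqrt n$ (which is guaranteed by all the stated hypotheses)
\[
\mathcal{N}_Q^D(\lambda)\ \ge\ \big|B(0,R-\sqrt n)\cap[0,\infty)^n\big|\ =\ \frac{\omega(n)}{2^n}\,(R-\sqrt n)^n .
\]

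It then remains to expand $(R-\sqrt n)^n$ and rewrite it in terms of $\lambda$. For $n=3$ one has $(R-\sqrt3)^3=R^3-3\sqrt3\,R^2+9R-3\sqrt3$; under $\lambda>3\pi^2/\ell(Q)^2$, i.e. $R>\sqrt3$, the lower-order part satisfies $9R-3\sqrt3\ge1$, and substituting $R=\ell(Q)\sqrt\lambda/\pi$ together with $\omega(3)=4\pi/3$ gives the first inequality of (i). For $n\ge4$ I would set $s:=\sqrt n/R$; the hypothesis $\lambda>n^3\pi^2/\ell(Q)^2$ is exactly $s<1/n$, and in that range, grouping the alternating binomial tail into consecutive pairs $\binom nk s^k-\binom n{k+1}s^{k+1}=\binom nk s^k\big(1-\tfrac{n-k}{k+1}s\big)\ge 0$ — the $k=2$ pair being $\ge\tfrac23\binom n2 s^2$ — one obtains $(1-s)^n\ge 1-ns+\tfrac23\binom n2 s^2$, hence
\[
(R-\sqrt n)^n\ \ge\ R^n-n^{3/2}R^{n-1}+\tfrac23\binom n2\,n\,R^{n-2}.
\]
Since $R^{n-2}>n^{3(n-2)/2}$ and $\omega(n)=\pi^{n/2}/\Gamma(\tfrac n2+1)$, the quadratic remainder $\frac{\omega(n)}{2^n}\tfrac23\binom n2\,n\,R^{n-2}$ exceeds $2^n\pi^{n/2}$; this reduces to an explicit inequality in $n$ alone, which I expect to verify directly for small $n$ and then by the super-exponential growth of its left-hand side. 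Multiplying by $\omega(n)/2^n$ and returning to $\lambda$ then gives the first inequality of (ii).

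The "for all $\lambda>0$" bounds come essentially for free: when $\lambda$ lies below the relevant threshold ($\lambda\le 27\pi^2/\ell(Q)^2$ for $n=3$, $\lambda\le n^3\pi^2/\ell(Q)^2$ for $n\ge4$) the claimed right-hand side is $\le 0\le \mathcal{N}_Q^D(\lambda)$, while above that threshold the inequality already proved is stronger; alternatively Bernoulli's inequality $(1-s)^n\ge 1-ns$ for $0\le s\le 1$ yields these weaker estimates at once. The only mildly delicate point is controlling the sign of the alternating binomial tail for $n\ge4$ and verifying the dimension-dependent constant $2^n\pi^{n/2}$ (this is what forces the hypothesis $\lambda>n^3\pi^2/\ell(Q)^2$ rather than just $\lambda>n\pi^2/\ell(Q)^2$); everything else is the argument of Lemma \ref{opposite-cube} repeated almost verbatim.
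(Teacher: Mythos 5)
Your argument is the same as the paper's: count the unit lattice cubes meeting the quarter‑ball of radius $R-\sqrt n$, where $R=\ell(Q)\sqrt\lambda/\pi$, expand $(R-\sqrt n)^n$ binomially, and control the alternating tail by pairing consecutive terms so that a positive remainder of order $R^{n-2}$ (coming from the $i=2$ pair) survives. The one step you defer, namely that $\frac{\omega(n)}{2^n}\cdot\frac{n^2(n-1)}{3}R^{n-2}\ge 2^n\pi^{n/2}$ once $R\ge n^{3/2}$, is true but slightly tighter than you suggest: with the crude bound $\Gamma(1+\frac n2)\le\left(\frac{n+2}{2}\right)^{n/2}$ it fails at $n=4$ (one gets $1024$ versus $2304$), so use the exact value $\omega(4)=\pi^2/2$ there, which gives $32\pi^2\ge 16\pi^2$; for $n\ge 5$ the super‑exponential growth of $n^{3n/2}$ against $4^n\Gamma(1+\frac n2)$ does the rest, and everything else matches the paper's proof verbatim.
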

\begin{proof}
(i) For $n=3$, we have
\begin{align*}
\mathcal{N}_Q^D(\lambda)&=\#\left\{\lambda_k<\lambda:\, \lambda_k=\frac{\pi^2}{\ell(Q)^2}(a^2+b^2+c^2),\,a,b,c\in\mathbb{N}\right\}.
\end{align*}
So the number $\mathcal{N}_Q^D(\lambda)$ is not less than the number of the unit cubes contained
in the first quadrant intersecting the ball
$B(0, \ell(Q)\sqrt{\lambda}/\pi-\sqrt 3)$, where $\sqrt 3$ is the diameter of the unit cube.
We therefore deduce that
\begin{align*}
\mathcal{N}_Q^D(\lambda)&\ge \frac 18 |B(0,\ell(Q)\sqrt{\lambda}/\pi-\sqrt 3 )| =\frac{\pi}{6}\left(\ell(Q)\sqrt{\lambda}/\pi-\sqrt 3 \right)^3\\
&\ge \frac{\ell(Q)^3}{6\pi^2}\lambda^{3/2} -\frac{3\sqrt 3 \pi\ell(Q)^2}{6} \frac{\lambda}{\pi^2}+\frac{9\pi}{6} \frac{\ell(Q)\sqrt\lambda}{\pi} -\frac{3\sqrt 3\pi}{6}\\
&>\frac{\ell(Q)^3}{6\pi^2}\lambda^{3/2} -\frac{3\sqrt 3 \ell(Q)^2}{6} \frac{\lambda}{\pi}+1,
\end{align*}
for $\sqrt\lambda>\pi\sqrt 3/\ell(Q)$. For $\sqrt\lambda\le \pi\sqrt 3/\ell(Q)$, note that
$$\frac{\ell(Q)^3}{6\pi^2}\lambda^{3/2} -\frac{3\sqrt 3 \ell(Q)^2}{6} \frac{\lambda}{\pi}\le 0\le \mathcal{N}_Q^D(\lambda),$$
which completes the proof.

(ii) For $n\ge 4$, we also have
\begin{align*}
\mathcal{N}_Q^D(\lambda)&=\#\left\{\lambda_k<\lambda:\, \lambda_k=\frac{\pi^2}{\ell(Q)^2} \sum_{i=1}^n a_i^2,\,a_i \in\mathbb{N}\right\}.
\end{align*}
So the number $\mathcal{N}_Q^D(\lambda)$ is not less than the number of the unit cubes contained
in the first quadrant intersecting the ball
$B(0, \ell(Q)\sqrt{\lambda}/\pi-\sqrt n)$, where $\sqrt n$ is the diameter of the unit cube.
We therefore deduce that
\begin{align*}
\mathcal{N}_Q^D(\lambda)&\ge \frac 1{2^n} |B(0,\ell(Q)\sqrt{\lambda}/\pi-\sqrt n)| =2^{-n}\omega(n) \left(\ell(Q)\sqrt{\lambda}/\pi-\sqrt n \right)^n\\
&\ge 2^{-n}\omega(n) \sum_{i=0}^n C_n^i(\ell(Q)\lambda^{1/2}\pi^{-1})^{n-i}(-\sqrt n)^i\\
&>\frac{\omega(n)\ell(Q)^n}{(2\pi)^n} \lambda^{n/2} -  \frac{\omega(n)n^{3/2}\pi \ell(Q)^{n-1}}{(2\pi)^n}\lambda^{\frac{n-1}{2}}+2^{-n}\omega(n) \sum_{i=2}^n C_n^i(\ell(Q)\lambda^{1/2}\pi^{-1})^{n-i}(-\sqrt n)^i,
\end{align*}
where $C_n^i$ denotes the combinatoric number.
If $n\ge 4$ is odd, then by letting $k_0=\frac{n-1}{2}$, we have
\begin{align*}
& \sum_{i=2}^n C_n^i(\ell(Q)\lambda^{1/2}\pi^{-1})^{n-i}(-\sqrt n)^i\\
&\quad =\sum_{k=1}^{k_0}\left( C_n^{2k}(\ell(Q)\lambda^{1/2}\pi^{-1})^{n-2k}(\sqrt n)^{2k}- C_n^{2k+1}(\ell(Q)\lambda^{1/2}\pi^{-1})^{n-2k-1}(\sqrt n)^{2k+1}\right)=:\sum_{k=1}^{k_0}D_k.
\end{align*}
For each $1\le k\le k_0$,
\begin{align*}
  D_k&= (\ell(Q)\lambda^{1/2}\pi^{-1})^{n-2k-1}(\sqrt n)^{2k}\left(\frac{n!}{(n-2k)!(2k)!}\ell(Q)\lambda^{1/2}\pi^{-1}-\frac{n!}{(n-2k-1)!(2k+1)!}\sqrt n\right)\\
  &=\frac{n!(\ell(Q)\lambda^{1/2}\pi^{-1})^{n-2k-1}(\sqrt n)^{2k}}{(n-2k-1)!(2k)!}\left(\frac{1}{(n-2k)}\ell(Q)\lambda^{1/2}\pi^{-1}-\frac{1}{2k+1}\sqrt n\right).
\end{align*}
So if
$$\ell(Q)\lambda^{1/2}\pi^{-1} \ge n^{3/2},$$
then we have
\begin{align*}
  D_k&>\frac{n!(\ell(Q)\lambda^{1/2}\pi^{-1})^{n-2k-1}(\sqrt n)^{2k}}{(n-2k-1)!(2k)!}\left(\frac{2\ell(Q)\lambda^{1/2}\pi^{-1}}{3(n-2k)}+\frac{n^{3/2}}{3(n-2k)}-\frac{1}{2k+1}\sqrt n\right)\\
  &> \frac{2n!(\ell(Q)\lambda^{1/2}\pi^{-1})^{n-2k}(\sqrt n)^{2k}}{3(n-2k)!(2k)!}>0.
\end{align*}
Inserting these estimates into $\mathcal{N}_Q^D(\lambda)$, using the estimate of $D_1$ and positivity of $D_k$ for $k\ge 2$, we see that
\begin{align*}
\mathcal{N}_Q^D(\lambda)&\ge \frac{\omega(n)\ell(Q)^n}{(2\pi)^n} \lambda^{n/2} -  \frac{\omega(n)n^{3/2}\pi \ell(Q)^{n-1}}{(2\pi)^n}\lambda^{\frac{n-1}{2}}+2^{-n}\omega(n) \sum_{k=1}^{k_0}D_k\\
&\ge \frac{\omega(n)\ell(Q)^n}{(2\pi)^n} \lambda^{n/2} -  \frac{\omega(n)n^{3/2}\pi \ell(Q)^{n-1}}{(2\pi)^n}\lambda^{\frac{n-1}{2}}+\frac{\omega(n) n(n-1)}{3\times 2^n} (\ell(Q)\lambda^{1/2}\pi^{-1})^{n-2} n.
\end{align*}

For $n\ge 4$ being even, by dropping the last term in the binomial-sum, which is positive, we see that the same argument as above yields that
the last estimate holds for this case, too.

Since
$$\Gamma(1+\frac{n}{2}) \le \left(\frac{n+2}{2}\right)^{n/2},$$
for $\lambda$ such that $\ell(Q)\lambda^{1/2}\pi^{-1} \ge n^{3/2},$ we deduce for $n\ge 4$ that
\begin{align*}
\frac{\omega(n)}{2^{n}} \frac{n^2(n-1)}{3} (\ell(Q)\lambda^{1/2}\pi^{-1})^{n-2}&\ge \frac{n-1}{3}\frac{\pi^{n/2}}{2^{n} } \left(\frac{2}{n+2}\right)^{n/2}n^{\frac{3n}{2}-1}
>\frac{n-1}{3n}\frac{\pi^{n/2}}{2^{n} } \left(\frac{1}{n}\right)^{n/2}n^{\frac{3n}{2}} \\
& \ge \frac{\pi^{n/2}}{4}\frac{n^n}{2^{n} }\ge 2^{n-2}\pi^{n/2},
\end{align*}
and hence,
\begin{align*}
\mathcal{N}_Q^D(\lambda)&\ge \frac{\omega(n)\ell(Q)^n}{(2\pi)^n} \lambda^{n/2} -  \frac{\omega(n)n^{3/2}\pi \ell(Q)^{n-1}}{(2\pi)^n}\lambda^{\frac{n-1}{2}}+2^{n-2}\pi^{n/2}.
\end{align*}

For $\ell(Q)\lambda^{1/2}\pi^{-1} < n^{3/2},$ it holds that
$$\frac{\omega(n)\ell(Q)^n}{(2\pi)^n} \lambda^{n/2} -  \frac{\omega(n)n^{3/2}\pi \ell(Q)^{n-1}}{(2\pi)^n}\lambda^{\frac{n-1}{2}}<0\le \mathcal{N}_Q^D(\lambda),$$
which completes the proof.
\end{proof}

Recall that by \eqref{defn-c1n} and \eqref{defn-c3n}, it holds for $n\ge 4$ that
\begin{align}
C_1(n-1) =
\frac{3\pi\mathscr{B}(\frac{n-1 }{2}-1,2)}{2\mathscr{B}(\frac{n-1 }{2}-1,5/2)}C_3(\frac{n-1}{2})=\frac{3\pi\mathscr{B}(\frac{n-3 }{2},2)}{2\mathscr{B}(\frac{n-3 }{2},5/2)}C_3(\frac{n-1}{2}),
\end{align}
where
\begin{align*}
C_3(p)=
\begin{cases}
\frac 19, & 1<p\le 2,\\
\frac{1}{8 }\left(\frac{247}{256}\right)^{p-1}, & p>2.
\end{cases}
\end{align*}

\begin{thm}\label{3hd-polya} Let $n\ge 3$.

(i) Suppose that $\Omega_1\subset\rr^{n-1}$ satisfies P\'olya's conjecture (e.g. ball, tiling domain, or our example in $\rr^{n-1}$), $\Omega_2$ is an interval in $\rr$.
 Let  $\{Q_{k}\}_{k\in\cn}$ be of {\em Admissible class} in $\rn$ that is contained in $\Omega_1\times\Omega_2$.
If $|\Omega_1|\ge  C_2(n-1)\mathscr{S}_Q$, then P\'olya's conjecture holds on $(\Omega_1\times\Omega_2)\setminus \overline{\cup_{k\in\cn}Q_{k}}$,
where $C_2(n-1)$ is as in \eqref{defn-c2n},
\begin{align*}
C_2(n)=\begin{cases}
9\sqrt 3, & n=2,\\
\frac{6(n+1)^{3/2}\mathscr{B}(\frac{n}{2}-1,5/2)}{\mathscr{B}(\frac{n}{2}-1,2)} & n=3,4,\\
\frac{16(n+1)^{3/2} \mathscr{B}(\frac{n }{2}-1,5/2)}{3\mathscr{B}(\frac{n }{2}-1,2)}\left(\frac{256}{247}\right)^{\frac{n-2}{2}}  &  n\ge 5.
\end{cases}
\end{align*}

(ii) Let $\Omega$ be a strip-tiling domain in $\rn$, $n\ge 3$. Let $S_n(\Omega)$ be the surface  of a side face of $\Omega$ in the $n$-th direction.
 Let  $\{Q_{k}\}_{k\in\cn}$ be of {\em Admissible class} in $\rn$ that is contained in $\Omega$.
If $|S_n(\Omega)|\ge  C_2(n-1)\mathscr{S}_Q$, then P\'olya's conjecture holds on $\Omega\setminus \overline{\cup_{k\in\cn}Q_{k}}$,
\end{thm}
\begin{proof} We only prove (i), the proof of (ii) is the same by using Theorem \ref{main}.

Denote by $\Omega_0=\Omega_1\times\Omega_2$,
 $\Omega_3$  the union  ${\cup_{k\in\cn} Q_{k}}$, and $\Omega_4=\Omega_0\setminus \overline{\Omega_3}$.
Let
 $$\mathscr{V}_Q:=\left|\cup_{k=1}^\infty Q_{k }\right|=\sum_{k=1}^\infty \ell(Q_k)^n.$$
Recall that, since the P\'olya conjecture is scaling invariant, in Definition \ref{defn-cube}, we have assumed that
the largest cube in $\{Q_k\}$ has side length one.  We further see that $|\Omega_2|\ge 1$ and
 $$\mathscr{V}_Q\le \mathscr{S}_Q<\infty.$$

{\bf Step 1}.
Let us first consider the case $n=3$.
 By Theorem \ref{pertubation-product-1d} we have for $\lambda>0$ that
\begin{align*}
\mathcal{N}^D_{\Omega_0}(\lambda)&\le  \frac{|\Omega_1||\Omega_2|\omega(3)}{(2\pi)^3}\lambda^{3/2}\left(1- \frac{{3\pi}}{16|\Omega_2|\sqrt{\lambda}}\right)_+.
\end{align*}

For $\Omega_4$, we have by Corollary \ref{first-k-dirichlet-1d}, for $k_0$
being the largest integer satisfying
$$k_0\le \frac{|\Omega_4|\omega(3)}{(2\pi)^n}\left[\left(\frac{\mathscr{V}_Q}{|\Omega_1||\Omega_2|}\right)^{-1}\frac{3\pi}{16|\Omega_2|}\right]^{3}=\frac{9\pi |\Omega_1|^3 |\Omega_4|}{2^{13}\mathscr{V}_Q^3},$$
it holds  for each $1\le k\le k_0$,  the P\'olya conjecture holds on $\Omega_4$ for $\lambda_k(\Omega_4)$, i.e.,
$$k\le \frac{|\Omega_4|\omega(3)}{(2\pi)^3}\lambda_{k}(\Omega_4)^{3/2}.$$
Since $|\Omega_1|\ge 9\sqrt 3 \mathscr{S}_Q\ge 9\sqrt 3\mathscr{V}_Q$ and $|\Omega_2|\ge 1$, it holds that
$$\lambda_{k_0}(\Omega_4)\ge \left(6\pi^2 \frac{k_0}{|\Omega_4|}\right)^{2/3}>\left(6\pi^2 \frac{\frac{9\pi |\Omega_1|^3 |\Omega_4|}{2^{13}\mathscr{V}_Q^3}-1}{|\Omega_4|}\right)^{2/3}> \left(6\pi^2 \frac{\frac{8\pi |\Omega_1|^3 |\Omega_4|}{2^{13}\mathscr{V}_Q^3}}{|\Omega_4|}\right)^{2/3}>61.  $$

Recall that  Remark \ref{product-23} implies that for $\lambda>\lambda_1(\Omega_2)$,
\begin{align}
 \mathcal{N}^D_{\Omega_1\times\Omega_2}(\lambda)&\le \sum_{j:\,\lambda_j(\Omega_2)<\lambda } \frac{|\Omega_1|\omega(2)}{(2\pi)^{2}} (\lambda-\lambda_j(\Omega_2))\nonumber\\
&\le  \frac{|\Omega_1||\Omega_2|\omega(3)}{(2\pi)^3}\lambda^{3/2}\left(1-  \left(\frac{\lfloor\frac{|\Omega_2|\sqrt{\lambda}}{\pi}\rfloor}{\frac{|\Omega_2|\sqrt{\lambda}}{\pi}}\right)^2\frac{3\pi}{4|\Omega_2|\sqrt\lambda}\right).
\end{align}
Since  $|\Omega_2|\ge 1$, we have for $\lambda>61$  that
$$\left(\frac{\lfloor\frac{|\Omega_2|\sqrt{\lambda)}}{\pi}\rfloor}{\frac{|\Omega_2|\sqrt{\lambda}}{\pi}}\right)^2\ge \frac 49,$$
and hence,
\begin{align}\label{stronger-3d-dirichlet}
\mathcal{N}^D_{\Omega_0}(\lambda)= \mathcal{N}^D_{\Omega_1\times\Omega_2}(\lambda)&\le  \frac{|\Omega_1||\Omega_2|\omega(3)}{(2\pi)^3}\lambda^{3/2}\left(1- \frac{\pi}{3|\Omega_2|\sqrt\lambda}\right).
\end{align}

For
$$\lambda\ge \lambda_{k_0}(\Omega_4)>61,$$
by the monotonicity of Dirichlet eigenvalues and the fact $|\Omega_4\cup\Omega_3|=|\Omega_0|$, we then have
\begin{align*}
\mathcal{N}^D_{\Omega_4\cup\Omega_3}(\lambda)\le \mathcal{N}^D_{\Omega_0}(\lambda)&\le  \frac{|\Omega_1||\Omega_2|\omega(3)}{(2\pi)^3}\lambda^{3/2}\left(1- \frac{{\pi}}{3|\Omega_2|\sqrt{\lambda}}\right).
\end{align*}

Note that by Lemma \ref{opposite-cube-hign} (i),
\begin{align*}
\mathcal{N}^D_{\Omega_4\cup\Omega_3}(\lambda)&= \mathcal{N}^D_{\Omega_4}(\lambda)+\sum_{j\in\cn}\mathcal{N}^D_{Q_{j}}(\lambda)\\
&\ge \mathcal{N}^D_{\Omega_4}(\lambda)+\sum_{j\in\cn}\left(\frac{\ell(Q_{j})^3}{6\pi^2}\lambda^{3/2} -\frac{\sqrt 3 \ell(Q_{j})^2}{2} \frac{\lambda}{\pi}\right)\\
&=\mathcal{N}^D_{\Omega_4}(\lambda)+\frac{\mathscr{V}_Q}{6\pi^2}\lambda^{3/2}-\frac{\sqrt 3 \mathscr{S}_Q}{2} \frac{\lambda}{\pi}.
\end{align*}
This together with \eqref{stronger-3d-dirichlet} implies that
\begin{align*}
\mathcal{N}^D_{\Omega_4}(\lambda)&\le \frac{|\Omega_1||\Omega_2|\omega(3)}{(2\pi)^3}\lambda^{3/2}\left(1- \frac{\pi}{3|\Omega_2|\sqrt\lambda}\right)-\frac{\mathscr{V}_Q}{6\pi^2}\lambda^{3/2}+\frac{\sqrt 3 \mathscr{S}_Q}{2} \frac{\lambda}{\pi}\\
&=  \frac{|\Omega_1||\Omega_2|-\mathscr{V}_Q}{(2\pi)^3}\omega(3)\lambda^{3/2}-\lambda\left(\frac{|\Omega_1|}{18\pi}  -\frac{\sqrt 3 \mathscr{S}_Q}{2\pi} \right)\\
&=\frac{|\Omega_4|}{(2\pi)^3}\omega(3)\lambda^{3/2}-\lambda\left(\frac{|\Omega_1|}{18\pi}  -\frac{\sqrt 3 \mathscr{S}_Q}{2\pi} \right).
\end{align*}
As $|\Omega_1|\ge 9\sqrt 3 \mathscr{S}_Q$, we see that
\begin{align*}
\mathcal{N}^D_{\Omega_4}(\lambda)&\le \frac{|\Omega_4|}{(2\pi)^3}\omega(3)\lambda^{3/2},
\end{align*}
which completes the case $n=3$.

%%and hence,
%%\begin{align*}
%%\frac{|\Omega_1||\Omega_2|\omega(3)}{(2\pi)^3}\lambda_k(\Omega)^{3/2}&=\frac{|\Omega_1||\Omega_2|\omega(3)}{(2\pi)^3}\lambda_{k+\tilde k}(\Omega\cup \Omega_3)^{3/2}\\
%%&\ge \mathcal{N}^D_{\Omega\cup \Omega_3}(\lambda_k(\Omega)) +\frac{|\Omega_1||\Omega_2|\omega(3)}{(2\pi)^3}\left(\frac{\lfloor\frac{|\Omega_2|\sqrt{\lambda_k(\Omega)}}{\pi}\rfloor}{\frac{|\Omega_2|\sqrt{\lambda_k(\Omega)}}{\pi}}\right)^2\frac{3\pi}{4|\Omega_2|}\lambda_k(\Omega)\\
%%&\ge k+\frac{K}{6\pi^2}\lambda_k(\Omega)^{3/2}+\lambda_k(\Omega)\left(\frac{|\Omega_1| }{8\pi}\left(\frac{\lfloor\frac{|\Omega_2|\sqrt{\lambda_k(\Omega)}}{\pi}\rfloor}{\frac{|\Omega_2|\sqrt{\lambda_k(\Omega)}}{\pi}}\right)^2 -\frac{K\sqrt 3}{2\pi}\right).
%%\end{align*}
%%
%%
%%
%%
%%Note that it holds that $\cup_{1\le i\le K}Q_i\subset \Omega_3$
%%and $|\Omega_3|=K$. By the monotonicity of Dirichlet eigenvalues,
%%we see that for each $j\ge 1$,
%%$$\lambda_j(\Omega_3)\le \lambda_j(\cup_{1\le i\le K}Q_i).$$
%%This implies that
%%$$\mathcal{N}_{\Omega_3}^D(\lambda_k(\Omega))\ge Kk_Q.$$
%%
%%So for the domain $\Omega\cup \Omega_3$, as $\Omega$ and $\Omega_3$ are disjoint, $\lambda_k(\Omega)$ corresponds to some $\lambda_{k+\tilde k}(\Omega\cup \Omega_3)$,
%%with
%%$$\tilde k\ge Kk_Q.$$
%%
%%So as soon as $|\Omega_1|\ge 9\sqrt 3K$, we have
%%\begin{align*}
%%\frac{|\Omega|\omega(3)}{(2\pi)^3}\lambda_k(\Omega)^{3/2}=\frac{(|\Omega_1||\Omega_2|-K)\omega(3)}{(2\pi)^3}\lambda_k(\Omega)^{3/2}=\frac{|\Omega_1||\Omega_2|\omega(3)}{(2\pi)^3}\lambda_k(\Omega)^{3/2}-\frac{K}{6\pi^2}\lambda_k(\Omega)^{3/2}&\ge k.
%%\end{align*}

{\bf Step 2}. Let us consider the case $n\ge 4$. The proof is the same as above, we only need to verify some constants.

For $\Omega_0$, by Theorem \ref{pertubation-product-1d} we have for $\lambda>0$ that
\begin{align*}
\mathcal{N}^D_{\Omega_1\times\Omega_2}(\lambda)&\le  \frac{|\Omega_1||\Omega_2|\omega(n)}{(2\pi)^n}\lambda^{n/2}\left(1- \frac{C_1(n-1)}{|\Omega_2|\sqrt{\lambda}}\right)_+.
\end{align*}

Moreover, for $\Omega_4$, by Corollary \ref{first-k-dirichlet-1d}, we have for $k_0$ being the largest integer satisfying
\begin{align}
k_0 &\le \frac{|\Omega_4|\omega(n)}{(2\pi)^n}\left[\left(1-\frac{|\Omega_4|}{|\Omega_1||\Omega_2|}\right)^{-1}\frac{C_1(n-1)}{|\Omega_2|}\right]^{n}= \frac{|\Omega_4|\omega(n)}{(2\pi)^n}\left[\left(\frac{\mathscr{V}_Q}{|\Omega_1||\Omega_2|}\right)^{-1}\frac{C_1(n-1)}{|\Omega_2|}\right]^{n}\nonumber\\
&=\frac{|\Omega_4|\omega(n)}{(2\pi)^n}  \frac{(C_1(n-1)|\Omega_1|)^n}{\mathscr{V}_Q^n},
\end{align}
the P\'olya conjecture holds true for $1\le k\le k_0$ on $\Omega_4$, i.e.,
$$\frac{|\Omega_4|\omega(n)}{(2\pi)^n}\lambda_k(\Omega_4)^{n/2}\ge k. $$
So for $\lambda_{k_0}(\Omega_4)$, as $|\Omega_1|\ge n^{3/2}\pi \mathscr{S}_Q /C_1(n-1) \ge n^{3/2}\pi \mathscr{V}_Q /C_1(n-1)$, we have
\begin{align*}
\lambda_{k_0}(\Omega_4)&\ge \left(\frac{(2\pi)^n}{|\Omega_4|\omega(n)} k_0\right)^{2/n} \ge \left(\frac{(2\pi)^n}{2|\Omega_4|\omega(n)} \frac{|\Omega_4|\omega(n)}{(2\pi)^n} \frac{ (C_1(n-1)|\Omega_1|)^n }{\mathscr{V}_Q^n} \right)^{2/n} \\
&\ge \frac 12 \frac{ (C_1(n-1)|\Omega_1|)^2 }{\mathscr{V}_Q^2}>n^3\pi^2/2.
\end{align*}

For
$$\lambda\ge \lambda_{k_0}(\Omega_4),$$
by the monotonicity of Dirichlet eigenvalues and the fact $|\Omega_4\cup\Omega_3|=|\Omega_0|$, we then have
\begin{align*}
\mathcal{N}^D_{\Omega_4\cup\Omega_3}(\lambda)\le \mathcal{N}^D_{\Omega_0}(\lambda)&\le   \frac{|\Omega_1||\Omega_2|\omega(n)}{(2\pi)^n}\lambda^{n/2}\left(1- \frac{C_1(n-1)}{|\Omega_2|\sqrt{\lambda}}\right).
\end{align*}
By Lemma \ref{opposite-cube-hign} (ii),
\begin{align*}
\mathcal{N}^D_{\Omega_4\cup\Omega_3}(\lambda)&= \mathcal{N}^D_{\Omega_4}(\lambda)+\sum_{j\in\cn}\mathcal{N}^D_{Q_{j}}(\lambda)\\
&\ge \mathcal{N}^D_{\Omega_4}(\lambda)+\sum_{j\in\cn}\left(\frac{\omega(n)\ell(Q_{j})^n}{(2\pi)^n} \lambda^{n/2} -  \frac{\omega(n)n^{3/2}\pi \ell(Q_{j})^{n-1}}{(2\pi)^n}\lambda^{\frac{n-1}{2}}\right)\\
&=\mathcal{N}^D_{\Omega_4}(\lambda)+\frac{\omega(n)\mathscr{V}_Q}{(2\pi)^n} \lambda^{n/2}-\frac{\omega(n)n^{3/2}\pi \mathscr{S}_Q}{(2\pi)^n}\lambda^{\frac{n-1}{2}}.
\end{align*}
The above two inequalities  imply  that
\begin{align*}
\mathcal{N}^D_{\Omega_4}(\lambda)&\le \frac{|\Omega_1||\Omega_2|\omega(n)}{(2\pi)^n}\lambda^{n/2}\left(1- \frac{C_1(n-1)}{|\Omega_2|\sqrt{\lambda}}\right)-\left(\frac{\omega(n)\mathscr{V}_Q}{(2\pi)^n} \lambda^{n/2}-\frac{\omega(n)n^{3/2}\pi \mathscr{S}_Q}{(2\pi)^n}\lambda^{\frac{n-1}{2}}\right)\\
&=  \frac{|\Omega_1||\Omega_2|-\mathscr{V}_Q}{(2\pi)^n}\omega(n)\lambda^{n/2}-\frac{\lambda^{\frac{n-1}{2}}\omega(n)}{(2\pi)^n} \left(C_1(n-1)|\Omega_1|  -n^{3/2}\pi \mathscr{S}_Q  \right)\\
&= \frac{|\Omega_4|}{(2\pi)^n}\omega(n)\lambda^{n/2}-\frac{\lambda^{\frac{n-1}{2}}\omega(n)}{(2\pi)^n} \left(C_1(n-1)|\Omega_1|  -n^{3/2}\pi \mathscr{S}_Q  \right).
\end{align*}
As $|\Omega_1|\ge n^{3/2}\pi \mathscr{S}_Q /C_1(n-1)=C_2(n-1)\mathscr{S}_Q$, we see that
\begin{align*}
\mathcal{N}^D_{\Omega_4}(\lambda)&\le \frac{|\Omega_4|}{(2\pi)^n}\omega(n)\lambda^{n/2},
\end{align*}
which completes the proof.
\end{proof}
\begin{proof}[Proof of the case $n\ge 3$ in Theorem \ref{polya-rectangle-minus}]
The result follows from Theorem \ref{3hd-polya} immediately.
\end{proof}
\subsection{Some other class satisfying P\'olya's conjecture}
\hskip\parindent
We next provide some result regarding the case of general domains satisfying P\'olya's conjecture.
\begin{thm}\label{removing-domain-nonproduct}
Let $\Omega_1\subset\rr^{n}$,  $n\ge 2$. Suppose that
P\'olya's conjecture holds on $\Omega_1$, and $\{Q_j\}_{j\in\cn}$ is of Admissible class contained in $\Omega_1$.
Then it holds on  $\Omega=\Omega_1\setminus\overline{\cup_{j\in\cn}Q_j}$ that
\begin{align}
k\le \begin{cases}
\frac{|\Omega|\omega(n)}{(2\pi)^n}\lambda_k(\Omega)^{n/2},\, &\, k\le 2,\\
\frac{|\Omega|\omega(n)}{(2\pi)^n}\lambda_k(\Omega)^{n/2}+\frac{\omega(n)n^{3/2}\pi \mathscr{S}_Q}{(2\pi)^n}\lambda_k(\Omega)^{\frac{n-1}{2}},\,&\, k>2.
\end{cases}
\end{align}
\end{thm}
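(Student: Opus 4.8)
\emph{Proof strategy.} The plan is to split into two ranges of $k$. For $k\le 2$ the asserted inequality $k\le\frac{|\Omega|\omega(n)}{(2\pi)^n}\lambda_k(\Omega)^{n/2}$ is nothing but the statement that P\'olya's conjecture holds for the first two Dirichlet eigenvalues of any bounded open domain: it follows from the Rayleigh--Faber--Krahn inequality for $\lambda_1$ and the Krahn--Szeg\"o inequality for $\lambda_2$ together with the validity of P\'olya's conjecture on balls (see \cite{Henrot06}), and it uses nothing about $\Omega_1$ or about the removed cubes. So the real content is the case $k>2$, which I would prove by the subtracting argument already used for Theorem~\ref{polya-rectangle-minus} and Theorem~\ref{3hd-polya}.

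Set $\Omega_3:=\cup_{j\in\cn}Q_j$ and $\mathscr{V}_Q:=\sum_{j}\ell(Q_j)^n$, so that $\Omega,Q_1,Q_2,\dots$ are pairwise disjoint open subsets of $\Omega_1$ with $|\Omega_1|=|\Omega|+\mathscr{V}_Q$ and $\mathscr{V}_Q\le\mathscr{S}_Q<\fz$ (the volume identity is the same one invoked in the proof of Theorem~\ref{polya-rectangle-minus}). The Dirichlet spectrum of a disjoint union of open sets is the multiplicity-counted union of the spectra, and for each fixed $\lambda$ only finitely many cubes contribute, since $\lambda_1(Q_j)=n\pi^2/\ell(Q_j)^2<\lambda$ forces $\ell(Q_j)>\sqrt n\,\pi/\sqrt\lambda$ and $\mathscr{S}_Q<\fz$ bounds the number of such cubes. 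Hence for every $\lambda>0$
\begin{align*}
\mathcal{N}^D_\Omega(\lambda)=\mathcal{N}^D_{\Omega\cup\Omega_3}(\lambda)-\sum_{j\in\cn}\mathcal{N}^D_{Q_j}(\lambda),
\end{align*}
and by domain monotonicity of Dirichlet eigenvalues together with P\'olya's conjecture on $\Omega_1$,
\begin{align*}
\mathcal{N}^D_{\Omega\cup\Omega_3}(\lambda)\le\mathcal{N}^D_{\Omega_1}(\lambda)\le\frac{|\Omega_1|\omega(n)}{(2\pi)^n}\lambda^{n/2}.
\end{align*}

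For the subtracted cube terms I would use the ``valid for all $\lambda>0$'' lower bounds of Lemma~\ref{opposite-cube}(ii) (for $n=2$) and Lemma~\ref{opposite-cube-hign} (for $n\ge 3$); after rewriting the constants ($\omega(2)=\pi$, $\omega(3)=\frac{4\pi}{3}$, and so on) these all take the uniform form
\begin{align*}
\mathcal{N}^D_{Q_j}(\lambda)\ge \frac{\omega(n)\ell(Q_j)^n}{(2\pi)^n}\lambda^{n/2}-\frac{\omega(n)\,n^{3/2}\pi\,\ell(Q_j)^{n-1}}{(2\pi)^n}\lambda^{\frac{n-1}{2}},\qquad n\ge 2,\ \lambda>0.
\end{align*}
Summing in $j$, inserting $\sum_j\ell(Q_j)^n=\mathscr{V}_Q$ and $\sum_j\ell(Q_j)^{n-1}=\mathscr{S}_Q$, and using $|\Omega_1|-\mathscr{V}_Q=|\Omega|$, one gets
\begin{align*}
\mathcal{N}^D_\Omega(\lambda)\le \frac{|\Omega|\omega(n)}{(2\pi)^n}\lambda^{n/2}+\frac{\omega(n)\,n^{3/2}\pi\,\mathscr{S}_Q}{(2\pi)^n}\lambda^{\frac{n-1}{2}}\qquad\text{for all }\lambda>0.
\end{align*}
Then for $k>2$ one takes $\lambda=\lambda_k(\Omega)+\epsilon$, uses $k\le\mathcal{N}^D_\Omega(\lambda_k(\Omega)+\epsilon)$, and lets $\epsilon\downarrow 0$ to reach the claimed estimate. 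The argument is short once the earlier lemmas are available; the only places that genuinely require attention are the additivity of the counting functions over the infinitely many disjoint cubes (handled by the finiteness of $\mathscr{S}_Q$, which makes the sum locally finite in $\lambda$) and the bookkeeping volume identity $|\Omega_1|=|\Omega|+\mathscr{V}_Q$, used exactly as in Theorem~\ref{polya-rectangle-minus}. These, rather than any analytic difficulty, are the main point to get right.
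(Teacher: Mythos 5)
Your proposal is correct and follows essentially the same route as the paper: the $k\le 2$ case via the Rayleigh--Faber--Krahn and Krahn--Szeg\"o inequalities, and the $k>2$ case via the subtracting argument with domain monotonicity, P\'olya's conjecture on $\Omega_1$, and the summed cube lower bounds of Lemma \ref{opposite-cube}(ii) and Lemma \ref{opposite-cube-hign} in the uniform form $\frac{\omega(n)\ell(Q_j)^n}{(2\pi)^n}\lambda^{n/2}-\frac{\omega(n)n^{3/2}\pi\ell(Q_j)^{n-1}}{(2\pi)^n}\lambda^{\frac{n-1}{2}}$. Your remarks on the local finiteness of the sum over cubes and the volume identity are correct and in fact slightly more careful than the paper's own write-up.
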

\begin{proof}
The required inequalities for $k\le 2$ followed from the Rayleigh-Faber-Krahn inequality ($\lambda_1(\Omega)$) and   the Krahn-Szeg\"o inequality  ($\lambda_2(\Omega)$), see \cite{Henrot06}.

Let us consider $k\ge 3$. Note that $\Omega\cup \cup_{j}Q_j\subsetneq \Omega_1$, and $|\Omega\cup \cup_jQ_j|=|\Omega_1|$.
By Lemma \ref{opposite-cube} and Lemma \ref{opposite-cube-hign} , and the fact $\Omega$, $Q_j$ are mutually disjoint, we find that
\begin{align*}
\mathcal{N}^D_{ \Omega_1}(\lambda)&\ge \mathcal{N}^D_{\Omega\cup \cup_{j}Q_j}(\lambda)= \mathcal{N}^D_{\Omega}(\lambda)+\sum_{j\in\cn}\mathcal{N}^D_{Q_{j}}(\lambda)\\
&\ge \mathcal{N}^D_{\Omega}(\lambda)+\sum_{j\in\cn}\left(\frac{\omega(n)\ell(Q_{j})^n}{(2\pi)^n} \lambda^{n/2} -  \frac{\omega(n)n^{3/2}\pi \ell(Q_{j})^{n-1}}{(2\pi)^n}\lambda^{\frac{n-1}{2}}\right)\\
&=\mathcal{N}^D_{\Omega}(\lambda)+\frac{\omega(n)\mathscr{V}_Q}{(2\pi)^n} \lambda^{n/2}-\frac{\omega(n)n^{3/2}\pi \mathscr{S}_Q}{(2\pi)^n}\lambda^{\frac{n-1}{2}},
\end{align*}
where
$$\mathscr{V}_Q:=\left|\cup_{k=1}^\infty Q_{k }\right|=\sum_{k=1}^\infty \ell(Q_k)^n.$$
This together with the assumption that $\Omega_1$ satisfies P\'olya's conjecture implies that
\begin{align}\label{e-product-new}
\mathcal{N}^D_{\Omega}(\lambda)&\le \frac{|\Omega_1|\omega(n)}{(2\pi)^n}\lambda^{n/2}-\left(\frac{\omega(n)\mathscr{V}_Q}{(2\pi)^n} \lambda^{n/2}-\frac{\omega(n)n^{3/2}\pi \mathscr{S}_Q}{(2\pi)^n}\lambda^{\frac{n-1}{2}}\right)\nonumber\\
&= \frac{|\Omega|\omega(n)}{(2\pi)^n}\lambda^{n/2}+\frac{\omega(n)n^{3/2}\pi \mathscr{S}_Q}{(2\pi)^n}\lambda^{\frac{n-1}{2}}.
\end{align}
This is equivalent to that, for $k\ge 3$,
\begin{align}
k\le \frac{|\Omega|}{(2\pi)^n}\omega(n)\lambda_k^{n/2}+\frac{\omega(n)n^{3/2}\pi \mathscr{S}_Q}{(2\pi)^n}\lambda_k^{\frac{n-1}{2}},
\end{align}
which completes the proof.
\end{proof}
Inspired by \cite[Theorem 1.2]{HW24}, using Theorem \ref{removing-domain-nonproduct} we have another class of domains
satisfying P\'olya's conjecture.
\begin{cor}\label{cor-almost-polya}
Let $\Omega_1\subset\rr^{n}$, $n\ge 2$. Suppose that
P\'olya's conjecture holds on $\Omega_1$ (e.g., tiling domain, ball, or our example), and $\{Q_j\}_{j\in\cn}$ is of Admissible class contained in $\Omega_1$.
Then the P\'olya conjecture holds on  $\Omega=(\Omega_1\setminus\overline{\cup_{j\in\cn}Q_j})\times \Omega_2$, whenever
$\Omega_2\subset\rr$ satisfies
$$|\Omega_2|   \le
\begin{cases}
\frac{|\Omega_1\setminus\overline{\cup_{j\in\cn}Q_j}|}{4\sqrt 2 \pi \mathscr{S}_Q }, &\, n=2,\\
\frac{|\Omega_1\setminus\overline{\cup_{j\in\cn}Q_j}|}{ 2\times 3^{5/2} \mathscr{S}_Q }\left(\frac{247}{256}\right)^{\frac{n-2}{2}}, &\, n=3,\\
\frac{|\Omega_1\setminus\overline{\cup_{j\in\cn}Q_j}|}{3 \mathscr{S}_Q }\left(\frac{247}{256}\right)^{\frac{n-2}{2}} \frac{\mathscr{B}(\frac{n-3}{2},2)} {2 n^{3/2}\mathscr{B}(\frac{n-3}{2},5/2)}, &\, n\ge 4.
\end{cases}
$$
\end{cor}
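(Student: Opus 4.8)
The plan is to treat $\widetilde\Omega:=\Omega_1\setminus\overline{\cup_{j\in\cn}Q_j}\subset\rr^n$ as a black box whose counting function is already controlled by Theorem \ref{removing-domain-nonproduct}, and then to pass to the product $\Omega=\widetilde\Omega\times\Omega_2\subset\rr^{n+1}$ using the one-dimensional Riesz-means estimates of Section 2, in the spirit of the proof of Theorem \ref{pertubation-product-1d}. First I would restate Theorem \ref{removing-domain-nonproduct} as a bound on the counting function: since
$$k\le\frac{|\widetilde\Omega|\omega(n)}{(2\pi)^n}\lambda_k(\widetilde\Omega)^{n/2}+\frac{\omega(n)n^{3/2}\pi\mathscr{S}_Q}{(2\pi)^n}\lambda_k(\widetilde\Omega)^{\frac{n-1}{2}}\quad\text{for }k\ge3$$
(and the stronger Faber-Krahn and Krahn-Szeg\"o bounds for $k\le2$), and the right-hand side is nondecreasing in $\lambda_k$, one obtains
$$\mathcal{N}^D_{\widetilde\Omega}(\lambda)\le\frac{|\widetilde\Omega|\omega(n)}{(2\pi)^n}\lambda^{n/2}+\frac{\omega(n)n^{3/2}\pi\mathscr{S}_Q}{(2\pi)^n}\lambda^{\frac{n-1}{2}}\qquad\text{for all }\lambda>0.$$

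Since the Dirichlet spectrum of a product is the sumset of the factor spectra,
$$\mathcal{N}^D_\Omega(\lambda)=\sum_{j:\,\lambda_j(\Omega_2)<\lambda}\mathcal{N}^D_{\widetilde\Omega}\!\big(\lambda-\lambda_j(\Omega_2)\big)\le\frac{|\widetilde\Omega|\omega(n)}{(2\pi)^n}\sum_{j}\big(\lambda-\lambda_j(\Omega_2)\big)_+^{n/2}+\frac{\omega(n)n^{3/2}\pi\mathscr{S}_Q}{(2\pi)^n}\sum_{j}\big(\lambda-\lambda_j(\Omega_2)\big)_+^{\frac{n-1}{2}}.$$
For the first sum I would apply the Riesz-means bounds of Section 2 exactly as in the proof of Theorem \ref{pertubation-product-1d}: estimate \eqref{riesz-dirichlet-n1} when $n=2$, and Lemma \ref{lem-1d} with $p=n/2$ when $n\ge3$. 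Using the identity $\frac{\omega(n)}{(2\pi)^n}\cdot\frac{2\mathscr{B}(\frac n2-1,5/2)}{3\pi\mathscr{B}(\frac n2-1,2)}=\frac{\omega(n+1)}{(2\pi)^{n+1}}$, this turns the first term into the P\'olya main term $\frac{|\Omega|\omega(n+1)}{(2\pi)^{n+1}}\lambda^{(n+1)/2}$ (recall $|\Omega|=|\widetilde\Omega|\,|\Omega_2|$) plus a strictly negative correction, namely $-\frac{|\widetilde\Omega|}{32\pi}\lambda$ when $n=2$ and $-\frac{|\widetilde\Omega|\omega(n)}{(2\pi)^n}C_3(\tfrac n2)\lambda^{n/2}$ when $n\ge3$; the truncations $(\cdot)_+$ are harmless once one restricts to the only relevant range $\lambda>\lambda_1(\Omega)$.

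For the second sum I would use the leading-order bound $\sum_{j}\big(\lambda-\lambda_j(\Omega_2)\big)_+^{\frac{n-1}{2}}\le c(n)\,|\Omega_2|\,\lambda^{n/2}$ with an explicit $c(n)$, obtained from \eqref{riesz-dirichlet-n2} when $n=2$, \eqref{riesz-dirichlet-n1} when $n=3$, and Lemma \ref{lem-1d} with $p=\tfrac{n-1}{2}$ when $n\ge4$ (keeping only its leading term). Combining the two sums gives
$$\mathcal{N}^D_\Omega(\lambda)\le\frac{|\Omega|\omega(n+1)}{(2\pi)^{n+1}}\lambda^{(n+1)/2}+\lambda^{n/2}\Big(\tfrac{\omega(n)n^{3/2}\pi\mathscr{S}_Q}{(2\pi)^n}\,c(n)\,|\Omega_2|-\tfrac{|\widetilde\Omega|\omega(n)}{(2\pi)^n}C_3(\tfrac n2)\Big)$$
(the last term being replaced by $-\tfrac{|\widetilde\Omega|}{32\pi}\lambda$ when $n=2$), so P\'olya's conjecture holds on $\Omega$ as soon as the negative correction dominates, i.e. as soon as $|\Omega_2|$ is at most a fixed multiple of $|\widetilde\Omega|/\mathscr{S}_Q$. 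Bounding $C_3(p)\ge\tfrac19\big(\tfrac{247}{256}\big)^{p-1}$, valid for all $p\ge1$, and inserting the explicit values of the Beta functions in the three ranges $n=2$, $n=3$, $n\ge4$ reproduces precisely the three displayed upper bounds on $|\Omega_2|$.

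The conceptual content is short; the real work is the explicit bookkeeping of constants---tracking the Beta-function ratios, the factor $\big(\tfrac{247}{256}\big)^{(n-2)/2}$ coming from $C_3(n/2)$, and the powers of $2$ and $\pi$ hidden in $\omega(\cdot)/(2\pi)^{\cdot}$---while using three different Riesz-means estimates from Section 2 according to the dimension. One also has to restrict to $\lambda>\lambda_1(\Omega)$ so that the $(\cdot)_+$ truncations in \eqref{riesz-dirichlet-n1}--\eqref{riesz-dirichlet-n2} cost nothing, which is harmless since P\'olya's conjecture is vacuous for $\lambda\le\lambda_1(\Omega)$.
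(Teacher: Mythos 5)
Your proposal is correct and follows essentially the same route as the paper: convert Theorem \ref{removing-domain-nonproduct} into a counting-function bound for $\Omega_1\setminus\overline{\cup_j Q_j}$, sum over the spectrum of $\Omega_2$, control the main sum by \eqref{riesz-dirichlet-n1} (for $n=2$) or Lemma \ref{lem-1d} with $p=n/2$ (for $n\ge3$) and the error sum by the leading terms of \eqref{riesz-dirichlet-n2}, \eqref{riesz-dirichlet-n1}, or Lemma \ref{lem-1d} with $p=\frac{n-1}{2}$ according to dimension, and then absorb the positive $\mathscr{S}_Q$-term into the negative correction $C_3(\tfrac n2)\ge\tfrac19(\tfrac{247}{256})^{n/2-1}$. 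The constants you describe are exactly those the paper obtains.
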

\begin{proof}
The proof is similar to the proof of Theorem \ref{pertubation-product-1d}.
Let $\Omega_0=\Omega_1\setminus\overline{\cup_{j\in\cn}Q_j}$.
Then for all $\lambda>\lambda_1(\Omega)$, where
\begin{align}
\lambda_1(\Omega)=\lambda_1(\Omega_0)+\lambda_1(\Omega_2)\ge  \frac{(2\pi)^2}{|\Omega_0|^{2/n}\omega(n)^{2/n}}+ \frac{\pi}{|\Omega_2|^{2}},
\end{align}
we have via \eqref{e-product-new} that
\begin{align*}
\mathcal{N}^D_{\Omega}(\lambda)&=\sum_{j:\,\lambda_j(\Omega_2)<\lambda }\#\left\{k:\,\lambda_k(\Omega_0)+\lambda_j(\Omega_2)<\lambda\right\}
=\sum_{j:\,\lambda_j(\Omega_2)<\lambda } \#\left\{k:\,\lambda_k(\Omega_0)<(\lambda -\lambda_j(\Omega_2))\right\}\\
&\le \sum_{j:\,\lambda_j(\Omega_2)<\lambda } \left[\frac{|\Omega_0|\omega(n)}{(2\pi)^{n}}  (\lambda -\lambda_j(\Omega_2))^{n/2}+  \frac{\omega(n)n^{3/2}\pi \mathscr{S}_Q}{(2\pi)^n}(\lambda -\lambda_j(\Omega_2))^{\frac{n-1}{2}}\right].
\end{align*}
For $n=2$, via Lemma \ref{lem-riesz-dirichlet-n1}  we find that
\begin{align*}
\mathcal{N}^D_{\Omega}(\lambda)&\le \sum_{j:\,\lambda_j(\Omega_2)<\lambda } \left[\frac{|\Omega_0|\omega(n)}{(2\pi)^{n}}  (\lambda -\lambda_j(\Omega_2))^{n/2}+  \frac{\omega(n)n^{3/2}\pi \mathscr{S}_Q}{(2\pi)^n}(\lambda -\lambda_j(\Omega_2))^{\frac{n-1}{2}}\right]\\
&\le \frac{|\Omega_0|\omega(n)}{(2\pi)^{n}}  \frac{2|\Omega_2|}{3\pi} \lambda^{3/2}  \left(1 - \frac{3\pi}{16|\Omega_2|\sqrt\lambda} \right) +\frac{\omega(n)n^{3/2}\pi \mathscr{S}_Q}{(2\pi)^n} \frac {|\Omega_2|}{4} \lambda\\
&= \frac{|\Omega_0||\Omega_2|}{6\pi^2}\lambda^{3/2}+\frac{ \omega(n)\lambda }{(2\pi)^{n}} \left({n^{3/2}\pi \mathscr{S}_Q} \frac {|\Omega_2|}{4}-\frac{|\Omega_0|}{8}\right).
\end{align*}
For
$$|\Omega_2|\le \frac{|\Omega_0|}{4\sqrt 2 \pi \mathscr{S}_Q },$$
we see that
\begin{align*}
\mathcal{N}^D_{\Omega}(\lambda)&\le  \frac{|\Omega_0||\Omega_2|}{6\pi^2}\lambda^{3/2}.
\end{align*}

For $n=3$, via Lemma \ref{lem-riesz-dirichlet-n1}  and Lemma \ref{lem-1d} we have that
\begin{align*}
\mathcal{N}^D_{\Omega}(\lambda)&\le \sum_{j:\,\lambda_j(\Omega_2)<\lambda }\left[ \frac{|\Omega_0|\omega(n)}{(2\pi)^{n}}  (\lambda -\lambda_j(\Omega_2))^{n/2}+  \frac{\omega(n)n^{3/2}\pi  \mathscr{S}_Q}{(2\pi)^n} (\lambda -\lambda_j(\Omega_2))^{\frac{n-1}{2}}\right]\\
&\le \frac{|\Omega_0|\omega(n)}{(2\pi)^{n}} \left( \frac{2|\Omega_2|}{3\pi\mathscr{B}(\frac{n-2}{2},2)}  \lambda^{\frac{n+1}{2}} \mathscr{B}(\frac{n-2}{2},5/2)-\frac{1}{9}\left(\frac{247}{256}\right)^{\frac{n-2}{2}}  \lambda^{\frac{n}{2}}  \right)\\
&\quad+ \frac{\omega(n)n^{3/2}\pi  \mathscr{S}_Q}{(2\pi)^n} \frac{2|\Omega_2|}{3\pi} \lambda^{n/2}\\
&=\frac{|\Omega_0||\Omega_2|\omega(n+1)}{(2\pi)^{n+1}} \lambda^{\frac{n+1}{2}} +\frac{ \omega(n)\lambda^{\frac n2}}{(2\pi)^{n}}\left( \frac{2n^{3/2} \mathscr{S}_Q |\Omega_2|}{3} -
\frac{|\Omega_0|}{9}\left(\frac{247}{256}\right)^{\frac{n-2}{2}}  \right).
\end{align*}
The desired conclusion follows as soon as
$$ |\Omega_2| \le
\frac{|\Omega_0|}{ 2\times 3^{5/2} \mathscr{S}_Q }\left(\frac{247}{256}\right)^{\frac{n-2}{2}} .$$

For $n\ge 4$, we deduce via Lemma \ref{lem-1d} that
\begin{align*}
\mathcal{N}^D_{\Omega}(\lambda)&\le  \sum_{j:\,\lambda_j(\Omega_2)<\lambda }\left[ \frac{|\Omega_0|\omega(n)}{(2\pi)^{n}}  (\lambda -\lambda_j(\Omega_2))^{n/2}+  \frac{\omega(n)n^{3/2}\pi  \mathscr{S}_Q}{(2\pi)^n} (\lambda -\lambda_j(\Omega_2))^{\frac{n-1}{2}}\right]\\
&\le \frac{|\Omega_0|\omega(n)}{(2\pi)^{n}} \left( \frac{2|\Omega_2|}{3\pi\mathscr{B}(\frac{n-2}{2},2)}  \lambda^{\frac{n+1}{2}} \mathscr{B}(\frac{n-2}{2},5/2)-\frac{1}{9}\left(\frac{247}{256}\right)^{\frac{n-2}{2}}  \lambda^{\frac{n}{2}}  \right)\\
&\quad+ \frac{\omega(n)n^{3/2}\pi  \mathscr{S}_Q}{(2\pi)^n} \frac{2|\Omega_2|}{3\pi\mathscr{B}(\frac{n-3}{2},2)}  \lambda^{\frac{n}{2}} \mathscr{B}(\frac{n-3}{2},5/2)\\
&=\frac{|\Omega_0||\Omega_2|\omega(n+1)}{(2\pi)^{n+1}} \lambda^{\frac{n+1}{2}} +\frac{ \omega(n)\lambda^{\frac n2}}{(2\pi)^{n}}\left( \frac{2n^{3/2} \mathscr{S}_Q |\Omega_2|\mathscr{B}(\frac{n-3}{2},5/2)}{3\mathscr{B}(\frac{n-3}{2},2)} -
\frac{|\Omega_0|}{9}\left(\frac{247}{256}\right)^{\frac{n-2}{2}}  \right).
\end{align*}
Since
$$|\Omega_2|   \le
\frac{|\Omega_0|}{3}\left(\frac{247}{256}\right)^{\frac{n-2}{2}} \frac{\mathscr{B}(\frac{n-3}{2},2)} {2n^{3/2}  \mathscr{S}_Q \mathscr{B}(\frac{n-3}{2},5/2)},$$
we see that
 \begin{align*}
\mathcal{N}^D_{\Omega}(\lambda)&\le \frac{|\Omega_0||\Omega_2|\omega(n+1)}{(2\pi)^{n+1}} \lambda^{\frac{n+1}{2}},
\end{align*}
which completes the proof.
\end{proof}

\section{Quantitative remainder estimate}
\hskip\parindent In this section, we prove our main uniform estimates on Lipschitz domains.
Let us recall the Whitney decomposition, see \cite[Appendix J]{Grafakos08}.
\begin{prop}\label{whitney}
Let $\Omega$ be an open nonempty subset of $\rn$. Then there exists a family of dyadic cubes $\{Q_i\}_j$ such that

(a) $\cup_j Q_j=\Omega$ and the $Q_j's$ have disjoint interiors.

(b) $\sqrt n \ell(Q_j)\le \mathrm{dist}(Q_j,\Omega^c)\le 4\sqrt n\ell(Q_j).$

(c) If the boundaries of $Q_j$ and $Q_k$ touch, then
$$\frac 14\le \frac{\ell(Q_j)}{\ell(Q_k)}\le 4.$$

(d) For a given $Q_j$ there exist at most $12^n$ $Q_k$'s that touch it.
\end{prop}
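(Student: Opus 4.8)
The plan is to carry out the classical Whitney construction. Since $\Omega$ is open, nonempty and $\neq\rn$, the function $d(x):=\dist(x,\Omega^c)$ is finite and strictly positive on $\Omega$. Let $\mathscr{F}$ be the collection of all dyadic cubes $Q$ with $\diam(Q)\le\dist(Q,\Omega^c)$, equivalently $\sqrt n\,\ell(Q)\le\dist(Q,\Omega^c)$. First I would record two elementary facts: if $x\in\Omega$ and $Q$ is a dyadic cube with $x\in Q$ and $\ell(Q)<d(x)/\sqrt n$, then $Q\in\mathscr{F}$, so $\bigcup_{Q\in\mathscr{F}}Q=\Omega$; conversely, any $Q\in\mathscr{F}$ with $x\in Q$ satisfies $\ell(Q)\le d(x)/\sqrt n$. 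Since the dyadic cubes through a fixed $x$ form a nested chain with side lengths tending to $\infty$, there is a unique maximal cube $Q^{(x)}\in\mathscr{F}$ with $x\in Q^{(x)}$. Let $\{Q_j\}$ enumerate the distinct cubes arising this way. Two distinct maximal dyadic cubes are either nested, which is impossible by maximality, or have disjoint interiors, and together they cover $\Omega$; this gives~(a).

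Next I would prove~(b). The lower bound $\sqrt n\,\ell(Q_j)\le\dist(Q_j,\Omega^c)$ is precisely membership in $\mathscr{F}$. For the upper bound, let $\widetilde Q_j$ be the dyadic parent of $Q_j$, so $\ell(\widetilde Q_j)=2\ell(Q_j)$ and $Q_j\subset\widetilde Q_j$. Since $\widetilde Q_j$ still contains the point $x$ for which $Q_j=Q^{(x)}$, maximality forces $\widetilde Q_j\notin\mathscr{F}$, i.e.\ $\dist(\widetilde Q_j,\Omega^c)<\sqrt n\,\ell(\widetilde Q_j)=2\sqrt n\,\ell(Q_j)$. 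Using that $\dist(\cdot,\Omega^c)$ is $1$-Lipschitz, for every $a\in Q_j$ and $b\in\widetilde Q_j$ one has $\dist(a,\Omega^c)\le|a-b|+\dist(b,\Omega^c)\le\diam(\widetilde Q_j)+\dist(b,\Omega^c)$; taking the infimum over $b\in\widetilde Q_j$ and then over $a\in Q_j$ gives $\dist(Q_j,\Omega^c)\le\diam(\widetilde Q_j)+\dist(\widetilde Q_j,\Omega^c)<2\sqrt n\,\ell(Q_j)+2\sqrt n\,\ell(Q_j)=4\sqrt n\,\ell(Q_j)$, which is~(b).

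For~(c), suppose the closures of $Q_j$ and $Q_k$ meet, say at a point $p$. Since $p\in\overline{Q_j}$ and $\dist(\cdot,\Omega^c)$ is continuous, $\dist(Q_j,\Omega^c)=\inf_{y\in\overline{Q_j}}\dist(y,\Omega^c)\le\dist(p,\Omega^c)$; and since $p\in\overline{Q_k}$, for every $a\in Q_k$ we have $\dist(p,\Omega^c)\le|p-a|+\dist(a,\Omega^c)\le\diam(Q_k)+\dist(a,\Omega^c)$, hence $\dist(p,\Omega^c)\le\diam(Q_k)+\dist(Q_k,\Omega^c)$. Chaining these bounds with~(b) applied to $Q_k$ gives
\[
\sqrt n\,\ell(Q_j)\le\dist(Q_j,\Omega^c)\le\diam(Q_k)+\dist(Q_k,\Omega^c)<\sqrt n\,\ell(Q_k)+4\sqrt n\,\ell(Q_k)=5\sqrt n\,\ell(Q_k),
\]
so $\ell(Q_j)<5\,\ell(Q_k)$; as $\ell(Q_j)/\ell(Q_k)$ is an integer power of $2$, this forces $\ell(Q_j)\le4\,\ell(Q_k)$, and the reverse inequality follows by symmetry, which is~(c).

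Finally,~(d) is a counting argument: by~(c), every $Q_k$ whose closure meets $\overline{Q_j}$ has $\ell(Q_k)\ge\tfrac{1}{4}\ell(Q_j)$ and, touching $Q_j$ while having side length at most $4\ell(Q_j)$, is contained in a fixed bounded concentric dilate of $Q_j$; since the selected cubes have pairwise disjoint interiors, a volume comparison already bounds the number of such $Q_k$, and a scale-by-scale refinement (at most $3^n$ neighbours of each side length $\ge\ell(Q_j)$, the two smaller scales being confined to a much smaller dilate of $Q_j$) yields the asserted bound $12^n$. I expect the main obstacle to be exactly this last piece of bookkeeping that pins down the precise constant $12^n$; every other step reduces to the triangle inequality for $\dist(\cdot,\Omega^c)$ together with the nesting structure of dyadic cubes, and the remaining details for~(d) are as in \cite[Appendix~J]{Grafakos08}.
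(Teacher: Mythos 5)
The paper offers no proof of this proposition at all --- it is quoted from \cite[Appendix J]{Grafakos08} --- and your proposal reconstructs precisely that classical argument (select the maximal dyadic cubes $Q$ with $\sqrt n\,\ell(Q)\le \dist(Q,\Omega^c)$), so in substance you and the paper are relying on the same construction. Parts (a), (b), (c) are correct and complete: the existence of the maximal cube $Q^{(x)}$, the use of the dyadic parent to get the upper bound in (b), and the power-of-two rigidity step in (c) are all right. Two remarks. First, your auxiliary claim that $x\in Q$ and $\ell(Q)<d(x)/\sqrt n$ already put $Q$ in $\mathscr{F}$ is false as stated (try $n=1$, $\Omega=\rr\setminus\{0.6\}$, $x=0.9$, $Q=[3/4,1)$, where $\dist(Q,\Omega^c)=0.15<\ell(Q)=0.25$): the triangle inequality only gives $\dist(Q,\Omega^c)\ge d(x)-\sqrt n\,\ell(Q)$, so the correct sufficient condition is $\ell(Q)\le d(x)/(2\sqrt n)$. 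This does not damage the proof, since dyadic cubes through $x$ of arbitrarily small side length still witness $\bigcup_{Q\in\mathscr{F}}Q=\Omega$, and the converse bound $\ell(Q)\le d(x)/\sqrt n$ for $Q\in\mathscr{F}$ --- the one actually needed for a maximal cube to exist --- is proved correctly. Second, (d) remains a sketch: to land at $12^n$ one must count, for each of the five admissible side lengths $2^m\ell(Q_j)$ with $-2\le m\le 2$, the dyadic cubes of that scale whose closure meets $\overline{Q_j}$ but whose interior is disjoint from $Q_j$ (the sub-cubes of $Q_j$ itself cannot belong to the family and must be discarded from the crude count); you acknowledge the gap and defer to the same reference the paper cites, which is acceptable here since the paper uses the proposition purely as a black box. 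You are also right to note that the statement implicitly requires $\Omega\neq\rn$, without which (b) cannot hold.
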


We first provide the proof for the estimate on the remainder of Weyl's law.
\begin{proof}[Proof of Theorem  \ref{weyl-remainder-uniform}]
{\bf Step 1. Upper bound for Dirichlet eigenvalues}.
Let $\{Q_j\}_{j\in\cn}$ be a sequence of dyadic cubes that forms a Whitney decomposition of $\Omega$ by
Proposition \ref{whitney}. Suppose that $B(x_0,r_{in})$ is the largest inner ball contained in $\Omega$.

Let $Q_l$ be a cube from $\{Q_j\}_j$ such that $x_0\in \overline{Q_l}$.
Then it holds that
$$\sqrt n\ell(Q_l)\le \mathrm{dist}(Q_l,\Omega^c)\le r_{in}\le 5\sqrt n \ell(Q_l).$$
If there is some other $Q_i$, with $\ell(Q_i)>\ell(Q_l)$, then
$$\sqrt n \ell(Q_i)\le r_{in}\le 5\sqrt n \ell(Q_l).$$
Let $2^{-k_0}$ be the side length of the largest cube in $\{Q_j\}_j$. So it holds that
$$\frac{r_{in}}{5\sqrt n}\le 2^{-k_0}=\sup_{Q_j}\{\ell(Q_j)\}\le \frac{r_{in}}{\sqrt n}.$$

For $0<\epsilon<r_{in}$, let
$$\Omega_\epsilon=\{x\in\Omega:\,\mathrm{dist}(x,\partial\Omega)<\epsilon\}.$$
Let $k_\epsilon\in\mathbb{Z}$ be such that
$$5\sqrt n 2^{-k_\epsilon}\le \epsilon< 5\sqrt n 2^{-k_\epsilon+1}.$$
Then $k_\epsilon>k_0$.

By Lemma \ref{opposite-cube} and Lemma \ref{opposite-cube-hign}, we have for any $\lambda>0$,
\begin{align*}
\mathcal{N}_\Omega^D(\lambda)&\ge \sum_{\ell(Q_j)>2^{-k_\epsilon}} \mathcal{N}_{Q_j}^D(\lambda)\\
&=\sum_{m=k_0}^{k_\epsilon-1} \sum_{Q_j:\,\ell(Q_j)=2^{-m}} \mathcal{N}_{Q_j}^D(\lambda) \\
&\ge \sum_{m=k_0}^{k_\epsilon-1} \sum_{Q_j:\,\ell(Q_j)=2^{-m}}  \left( \frac{\omega(n)\ell(Q_j)^n}{(2\pi)^n} \lambda^{n/2} -  \frac{\omega(n)n^{3/2}\pi \ell(Q_j)^{n-1}}{(2\pi)^n}\lambda^{\frac{n-1}{2}}\right).
\end{align*}
Note that all the cubes $\{Q_j\}_j$ with side length smaller than $2^{-k_\epsilon+1}$ is contained in $\Omega_\epsilon$. Moreover,
 the number of cubes of side length $2^{-j}$, $j\ge k_0$, is smaller than
$$\frac{|\Omega_{5\sqrt n 2^{-j}}|}{2^{-jn}}\le \frac{C_{Lip}(\Omega) 5\sqrt n 2^{-j}|\partial\Omega| }{2^{-jn}}\le 5\sqrt n 2^{j(n-1)}C_{Lip}(\Omega) |\partial\Omega|,$$
where we used the fact that
$$|  \Omega_{5\sqrt n 2^{-j}} |=\left|\{x\in\Omega:\,\mathrm{dist}(x,\partial\Omega)<5\sqrt n 2^{-j}\}\right|\le 5\sqrt n 2^{-j} C_{Lip}(\Omega) |\partial\Omega|,$$
$C_{Lip}(\Omega)$ is defined as in  \eqref{constant-omega}.

We therefore deduce that
\begin{align*}
\mathcal{N}_\Omega^D(\lambda)&\ge   \frac{\omega(n)|\Omega\setminus \Omega_\epsilon|}{(2\pi)^n} \lambda^{n/2} -   \sum_{m=k_0}^{k_\epsilon-1}  5\sqrt n C_{Lip}(\Omega)|\partial\Omega| \frac{\omega(n)n^{3/2}\pi  }{(2\pi)^n}\lambda^{\frac{n-1}{2}}\nonumber \\
&\ge  \frac{\omega(n)|\Omega|}{(2\pi)^n} \lambda^{n/2}-  \frac{\omega(n)|\Omega_\epsilon|}{(2\pi)^n} \lambda^{n/2} -5\sqrt n (k_\epsilon-k_0)C_{Lip}(\Omega)|\partial\Omega| \frac{\omega(n)n^{3/2}\pi  }{(2\pi)^n}\lambda^{\frac{n-1}{2}}.
\end{align*}
Note that
$$|  \Omega_\epsilon|=\left|\{x\in\Omega:\,\mathrm{dist}(x,\partial\Omega)<\epsilon\}\right|\le C_{Lip}(\Omega)\epsilon |\partial\Omega|$$
and
$$2^{k_\epsilon-k_0}\le {\frac{10\sqrt n}{\epsilon}}{\frac{r_{in}}{\sqrt n }}\le \frac{10r_{in}}{\epsilon}.$$
We therefore have
\begin{align}\label{epsilon-lower-convex}
\mathcal{N}_\Omega^D(\lambda)&\ge
 \frac{\omega(n)|\Omega|}{(2\pi)^n} \lambda^{n/2}-  \frac{C_{Lip}(\Omega)\omega(n)|\partial\Omega|}{(2\pi)^n}\lambda^{\frac{n-1}{2}}\left(\epsilon \lambda^{1/2} +5n^{2}\pi\log_2\frac{10r_{in}}{\epsilon} \right).
\end{align}
Let $\mathscr{R}=\mathscr{R}_{n-1}\times I_{min}$ be the minimal rectangle for $\Omega$. Since
$$\lambda_1(\Omega)\ge \lambda_1(\mathscr{R})>\frac{\pi^2}{|I_{min}|^2},$$
we see that
$$\frac{\pi r_{in}}{|I_{min}|\sqrt{\lambda_1(\Omega)}}<  r_{in}. $$
So for each $k\in\cn$ we can let
$$\epsilon=\frac{\pi r_{in}}{|I_{min}|\sqrt{\lambda_k(\Omega)}}$$
and deduce that
\begin{align*}
k&\ge
 \frac{\omega(n)|\Omega|}{(2\pi)^n} \lambda_k(\Omega)^{n/2}-  \frac{C_{Lip}(\Omega)\omega(n)|\partial\Omega| }{(2\pi)^n}\lambda_k(\Omega)^{\frac{n-1}{2}}\left(\frac{\pi r_{in}}{|I_{min}|}+5n^{2}\pi\log_2\frac{10|I_{min}| \sqrt{\lambda_k(\Omega)}}{\pi } \right).
\end{align*}
The proof of {\bf Step 1} is complete.

{\bf Step 2. Lower bound for the Dirichlet eigenvalues}.
To prove the lower bound  for the Dirichlet eigenvalues of $\Omega$, let us set $\Omega_1=\mathscr{R}\setminus \overline{\Omega}$. Obviously, $\Omega_1$ is also a Lipschitz domain.
Moreover, by \eqref{constant-omega}, it holds that
\begin{align}\label{constant-complement-omega}
\left|\{x\in \Omega_1:\,\mathrm{dist}(x,\partial \Omega_1)<\epsilon\}\right|
&\le \epsilon |\partial \mathscr{R}\cap\partial\Omega_1|+C_{Lip}(\Omega)\epsilon |\partial\Omega_1 \cap(\partial\Omega\setminus\partial\mathscr{R})|\nonumber\\
&\le C_{Lip}(\Omega) \epsilon|\partial\Omega_1|.
\end{align}
Recall also that $r_{in}(\Omega_1)$ denotes the inner radius of $\Omega_1$.

Since $\Omega\cup \Omega_1\subset\mathscr{R}=\mathscr{R}_{n-1}\times I_{min}$ and $|\Omega\cup \Omega_1|=|\mathscr{R}|$, we have via the monotonicity of Dirichlet eigenvalues  for any $\lambda>0$ that
\begin{align*}
\mathcal{N}_{\Omega\cup\Omega_1}^D(\lambda)=\mathcal{N}_{\Omega}^D(\lambda)+\mathcal{N}_{\Omega_1}^D(\lambda)\le \mathcal{N}_{\mathscr{R}}^D(\lambda).
\end{align*}
Applying {\bf Step 1} to $\Omega_1$, \eqref{constant-complement-omega} and Theorem \ref{pertubation-product-1d}, we find that
\begin{align*}
\mathcal{N}_{\Omega}^D(\lambda)&\le  \mathcal{N}_{\mathscr{R}}^D(\lambda)-\mathcal{N}_{\Omega_1}^D(\lambda)\\
& \le \frac{|\mathscr{R}_{n-1}||I_{min}|\omega(n)}{(2\pi)^n}\lambda^{n/2}\left(1- \frac{C_1(n-1)}{|I_{min}|\sqrt{\lambda}}\right)_+\\
&\quad - \frac{\omega(n)|\Omega_1|}{(2\pi)^n} \lambda^{n/2}+  \frac{C_{Lip}(\Omega)\omega(n)|\partial\Omega_1|}{(2\pi)^n}\lambda^{\frac{n-1}{2}}\left(\epsilon \lambda^{1/2} +5n^{2}\pi\log_2\frac{10r_{in}(\Omega_1) }{\epsilon} \right),
\end{align*}
for $0<\epsilon<r_{in}(\Omega_1)$.

Since
$$\lambda_1(\Omega)\ge \lambda_1(\mathscr{R})>\frac{\pi^2}{|I_{min}|^2},$$
we see that
$$\frac{\pi r_{in}(\Omega_1)}{|I_{min}|\sqrt{\lambda_1(\Omega)}}<  r_{in}(\Omega_1). $$
So for each $k\in\cn$ we can let
$$\epsilon=\frac{\pi r_{in}(\Omega_1)}{|I_{min}|\sqrt{\lambda_k(\Omega)}}$$
and get that
\begin{align*}
k&\le \frac{|\Omega|\omega(n)}{(2\pi)^n}\lambda_k(\Omega)^{n/2}\\
&\quad+  \frac{\omega(n)}{(2\pi)^n}\lambda_k(\Omega)^{\frac{n-1}{2}} \left(C_{Lip}(\Omega)|\partial\Omega_1| \left(\frac{\pi r_{in}(\Omega_1)}{|I_{min}|}+5n^{2}\pi\log_2\frac{10|I_{min}|\sqrt{\lambda_k(\Omega)}}{\pi} \right)-C_1(n-1)|\mathscr{R}_{n-1}|\right).
\end{align*}
The proof is complete.
\end{proof}

We next sketch the proof of Corollary \ref{cor-uniform-weyl}.
\begin{proof}[Proof of Corollary \ref{cor-uniform-weyl}]
If $\Omega\subset\rn$ is a convex domain, then obviously
$$|  \Omega_\epsilon|=\left|\{x\in\Omega:\,\mathrm{dist}(x,\partial\Omega)<\epsilon\}\right|\le \epsilon |\partial\Omega|,$$
i.e., $C_{Lip}(\Omega)=1$ in {\bf Step 1} above.

For {\bf Step 2}, when dealing with the complementary set $\mathscr{R}\setminus\overline{\Omega}$,
one can carry out a Whitney decomposition of the domain $\mathscr{R}\setminus \overline{\Omega}$, w.r.t. to the boundary
$\partial\Omega$, see Figure \ref{fig5}.
\begin{figure}[htbp]
\centering
\includegraphics[scale=0.5]{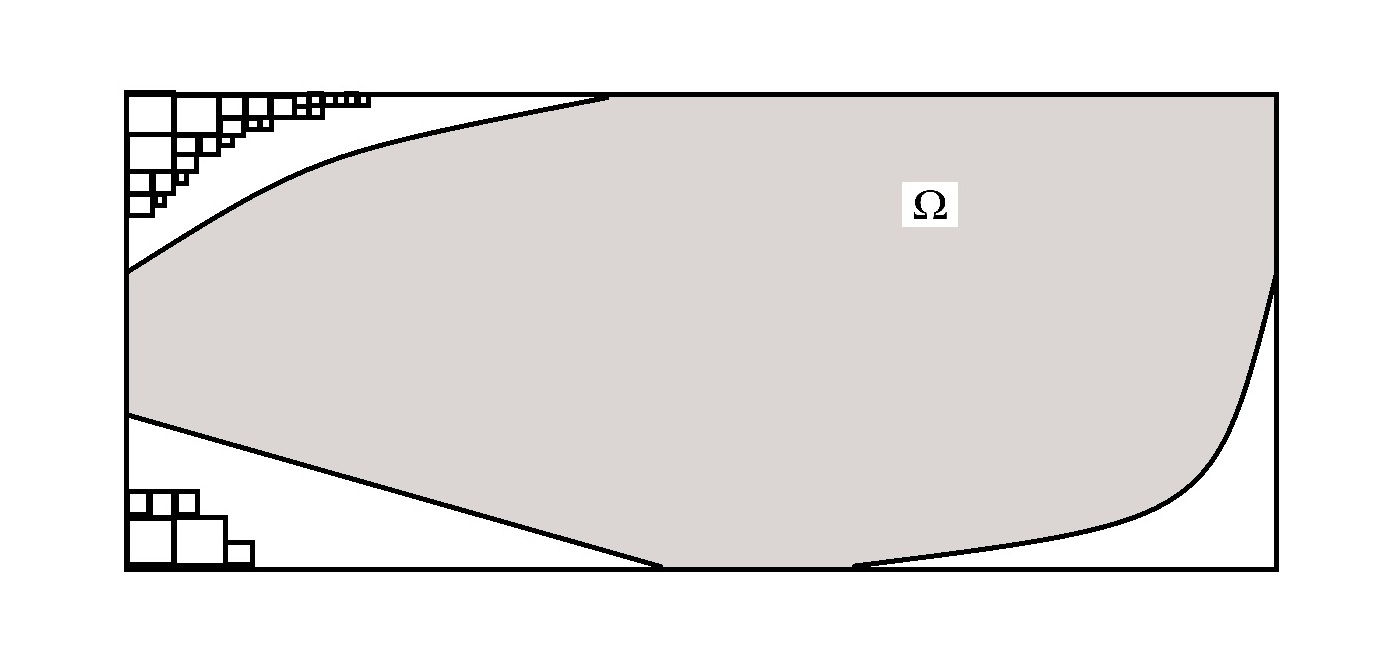}
\caption{Whitney decomposition for the complementary of a convex set}
\label{fig5}
\end{figure}
The convexity of $\mathscr{R}$ and $\Omega$ would imply that
$$\left|\{x\in\mathscr{R}\setminus\overline{\Omega}:\,\mathrm{dist}(x,\partial\Omega)<\epsilon\}\right| \le
|\partial\mathscr{R}\setminus\partial\Omega|\epsilon.$$
Applying this estimate to {\bf Step 2} above gives that
\begin{align*}
k&\le \frac{|\Omega|\omega(n)}{(2\pi)^n}\lambda_k(\Omega)^{n/2}\\
&\quad+  \frac{\omega(n)}{(2\pi)^n}\lambda_k(\Omega)^{\frac{n-1}{2}} \left(|\partial\mathscr{R}\setminus\partial\Omega|\left(\frac{\pi r_{in}(\mathscr{R}\setminus \overline{\Omega})}{|I_{min}|}+5n^{2}\pi\log_2\frac{10|I_{min}|\sqrt{\lambda_k(\Omega)}}{\pi} \right)-C_1(n-1)|\mathscr{R}_{n-1}|\right).
\end{align*}
The proof is complete.
\end{proof}

We now prove the $\epsilon$-loss version of P\'olya's conjecture for large eigenvalues.
\begin{proof}[Proof of Theorem \ref{epsilon-loss-lip}]
Let $\Omega_1=\mathscr{R}\setminus\overline{\Omega}$. By Theorem \ref{weyl-remainder-uniform}, it holds for each $k\in\cn$ that
\begin{align*}
k&\le \frac{|\Omega|\omega(n)}{(2\pi)^n}\lambda_k(\Omega)^{n/2}\\
&\quad+  \frac{\omega(n)}{(2\pi)^n}\lambda_k(\Omega)^{\frac{n-1}{2}} \left(C_{Lip}(\Omega)|\partial (\mathscr{R}\setminus \overline{\Omega})| \left(\frac{\pi r_{in}(\mathscr{R}\setminus \overline{\Omega})}{|I_{min}|}+5n^{2}\pi\log_2\frac{10|I_{min}|\sqrt{\lambda_k(\Omega)}}{\pi} \right)-C_1(n-1)|\mathscr{R}_{n-1}|\right)\\
&\le  \frac{|\Omega|\omega(n)}{(2\pi)^n}\lambda_k(\Omega)^{n/2} \left(1+ \frac{C_{Lip}(\Omega)(2n\mathrm{diam}(\Omega)^{n-1}+|\partial\Omega|)}{\sqrt{\lambda_k}|\Omega|} \left(\frac{\pi }{2}+5n^{2}\pi\log_2\frac{10\mathrm{width}(\Omega)\sqrt{\lambda_k(\Omega)}}{\pi} \right)  \right).
\end{align*}
Taking derivative of the  function
$$f(\Lambda):= \frac{2n\mathrm{diam}(\Omega)^{n-1}+|\partial\Omega|}{\sqrt{\Lambda}|\Omega|} \left(\frac{\pi }{2}+5n^{2}\pi\log_2\frac{10\mathrm{width}(\Omega)\sqrt{\Lambda}}{\pi} \right), $$
we see that
\begin{align*}
f'(\Lambda)&=  -\frac{2n\mathrm{diam}(\Omega)^{n-1}+|\partial\Omega|}{2\Lambda^{3/2}|\Omega|} \left(\frac{\pi }{2}+5n^{2}\pi\log_2\frac{10\mathrm{width}(\Omega)\sqrt{\Lambda}}{\pi} \right)\\
&\quad\quad + \frac{2n\mathrm{diam}(\Omega)^{n-1}+|\partial\Omega|}{\sqrt{\Lambda}|\Omega|} \frac{5n^{2}\pi  \log_2e  }{2\Lambda}.
\end{align*}
We have for each $\Lambda\ge \frac{1}{\mathrm{width}(\Omega)^2}$ that
$$\frac{10\mathrm{width}(\Omega)\sqrt{\Lambda}}{\pi}\ge \frac{ 10}{\pi}>e,$$
and therefore $f(\Lambda)$ is monotone decreasing on $[\frac{1}{\mathrm{width}(\Omega)^2},\infty)$.

Note that for  $\Lambda=\frac{1}{\mathrm{width}(\Omega)^2}$,
$$f(\Lambda)=\frac{(2n \mathrm{diam}(\Omega)^{n-1}+|\partial\Omega|)\mathrm{width}(\Omega)}{|\Omega|} \left(\frac{\pi}{2}+5n^{2}\pi\log_2\frac{10}{\pi} \right)>2n,$$
$$\lim_{\Lambda\to\infty}f(\Lambda)=0,$$
and $C_{Lip}(\Omega)\ge 1$.
So for any $\epsilon\in (0,1)$, we can let $\Lambda> \frac{1}{\mathrm{width}(\Omega)^2}$ be such that
$$ C_{Lip}(\Omega)\frac{2n \mathrm{diam}(\Omega)^{n-1}+|\partial\Omega|}{|\Omega|\sqrt\Lambda} \left(\frac{\pi}{2}+5n^{2}\pi\log_2\frac{10\mathrm{width}(\Omega)\sqrt{\Lambda}}{\pi} \right)= \epsilon,$$
and deduce for any $\lambda_k(\Omega)\ge \Lambda$ that
\begin{align*}
k&\le (1+\epsilon)\frac{|\Omega|\omega(n)}{(2\pi)^n}\lambda_k(\Omega)^{n/2}.
\end{align*}

If $\Omega$ is convex, then applying Corollary \ref{cor-uniform-weyl} gives that
\begin{align*}
k&\le \frac{|\Omega|\omega(n)}{(2\pi)^n}\lambda_k(\Omega)^{n/2} \left(1+ \frac{2n\mathrm{diam}(\Omega)^{n-1}}{\sqrt{\lambda_k}|\Omega|}
\left(\frac{\pi}{2}-\frac{C_1(n-1)}{2n}+5n^{2}\pi\log_2\frac{10\mathrm{width}(\Omega)\sqrt{\lambda_k}}{\pi} \right)  \right).
\end{align*}
The remaining proof is the same. The proof is complete.
\end{proof}

\subsection*{Acknowledgments}
\addcontentsline{toc}{section}{Acknowledgments} \hskip\parindent
We wish to thank Prof. Rupert Frank and Prof. Simon Larson for bringing references \cite{FLW23,FL24b,Larson17} to us and several useful comments,
and thank Dr. Xiang He for bringing the reference \cite{NS05}.
R. Jiang wishes to thank Prof. Xiao Zhong for explaining to him  P\'olya's conjecture
and helpful discussions at University of Jyv\"askyl\"a around 2010.
R. Jiang was partially supported by NNSF of China (12526205 \& 12471094),  F.H. Lin
was in part supported by the National Science Foundation Grant  DMS2247773 and DMS1955249.

\noindent Renjin Jiang \\
\noindent Academy for Multidisciplinary Studies\\
\noindent Capital Normal University\\
\noindent Beijing 100048\\
\noindent {rejiang@cnu.edu.cn}

\

\noindent Fanghua Lin\\
\noindent Courant Institute of Mathematical Sciences\\
\noindent 251 Mercer Street, New York, NY 10012, USA \\
\noindent linf@cims.nyu.edu

\end{document}